\theoremstyle{plain}
\newtheorem{thm}{Theorem}[section]
\theoremstyle{plain}
\newtheorem{lem}[thm]{Lemma}
\newtheorem{cor}[thm]{Corollary}
\theoremstyle{definition}
\newtheorem{rem}{Remark}[section]
\newcommand{\hn}{\mathbb{H}^{N}}
\newcommand{\authorfootnotes}{\renewcommand\thefootnote{\@fnsymbol\c@footnote}}%
\numberwithin{equation}{section} \allowdisplaybreaks
\begin{document}
        \title[]{Improved higher order poincar\'e inequalities\\ on the hyperbolic space via Hardy-type remainder terms}

\date{}

\author[Elvise BERCHIO]{Elvise BERCHIO}
\address{\hbox{\parbox{5.7in}{\medskip\noindent{Dipartimento di Scienze Matematiche, \\
Politecnico di Torino,\\
        Corso Duca degli Abruzzi 24, 10129 Torino, Italy. \\[3pt]
        \em{E-mail address: }{\tt elvise.berchio@polito.it}}}}}
\author[Debdip GANGULY]{Debdip GANGULY}
\address{\hbox{\parbox{5.7in}{\medskip\noindent{Dipartimento di Scienze Matematiche, \\
Politecnico di Torino,\\
        Corso Duca degli Abruzzi 24, 10129 Torino, Italy. \\[3pt]
        \em{E-mail address: }{\tt debdip.ganguly@polito.it}}}}}
 
\date{\today}

\keywords{Higher order Poincar\'{e} inequalities, Poincar\'e-Hardy inequalities,  Hyperbolic space}

\subjclass[2010]{26D10, 46E35, 31C12}

\begin{abstract}
The paper deals about Hardy-type inequalities associated with the following higher order Poincar\'e inequality:
   
   \[
   \left( \frac{N-1}{2} \right)^{2(k -l)} := \inf_{ u \in C_{c}^{\infty} \setminus \{0\}} \frac{\int_{\hn} |\nabla_{\hn}^{k} u|^2  \ dv_{\hn}}{\int_{\hn} |\nabla_{\hn}^{l} u|^2  \ dv_{\hn} }\,,
   \]
where $0 \leq l < k$ are integers and $\hn$ denotes the hyperbolic space. More precisely, we improve the Poincar\'e inequality associated with the above ratio by showing the existence of $k$ Hardy-type remainder terms. Furthermore, when $k = 2$ and $l = 1$ the existence of further remainder terms are provided and the sharpness of some constants is also discussed. As an application, we derive improved Rellich type inequalities on upper half space of the Euclidean space with non-standard remainder terms.
\end{abstract}

\maketitle

 \section{Introduction}
Let $\hn$ denote the hyperbolic space and let $k, l$ be non-negative integers such that $l < k$. The following higher order Poincar\'e inequality \cite[Lemma 2.4]{SD} holds
 \begin{equation}\label{high}
\int_{\mathbb{H}^{N}} |\nabla_{\mathbb{H}^{N}}^{k} u|^{2}  \ dv_{\mathbb{H}^{N}} \geq \left( \frac{N-1}{2} \right)^{2(k - l)} 
\int_{\mathbb{H}^{N}} |\nabla_{\mathbb{H}^{N}}^{l} u|^2 \ dv_{\mathbb{H}^{N}},
\end{equation}
for all $u \in H^{k}(\mathbb{H}^{N})$, where  
$$ \nabla_{\mathbb{H}^{N}}^{j}:=
\left \{\begin{array}{ll} 
\Delta_{\hn}^{j/2} & \text{if $j$ is an even integer}\,,\\
\nabla_{\hn} \Delta_{\hn}^{(j-1)/2} & \text{if $j$ is an odd integer}
 \end{array}\right.$$
and $\nabla_{\mathbb{H}^{N}}$ denotes the Riemannian gradient while $\Delta_{\hn}^j$ denotes the $j-$th iterated Laplace-Beltrami operator.  
The present paper takes the origin from the basic observation that the inequality in \eqref{high} is strict for $u\neq 0$, namely the following infimum is never achieved
\begin{equation*}\label{inf}
  \left(  \frac{N-1}{2}    \right)^{2(k-l)} = \inf_{u \in H^{k}(\mathbb{H}^{N}) \setminus \{ 0 \} }
\frac{\int_{\mathbb{H}^{N}} |\nabla_{\mathbb{H}^{N}}^{k} u|^{2}  \ dv_{\mathbb{H}^{N}}}{\int_{\mathbb{H}^{N}} |\nabla_{\mathbb{H}^{N}}^{l} u|^2 \ dv_{\mathbb{H}^{N}}}\,.
\end{equation*}
It becomes then a natural problem to look for possible remainder terms for \eqref{high}. In this direction, when $k=1$ and $l=0$, a remainder term of Sobolev type has been determined in \cite{mancini}. The aim of our study is to deal with Hardy remainder terms, namely to determine improved Hardy inequalities for higher order operators, where the improvement is meant with respect to the higher order Poincar\'e inequality \eqref{high}. More precisely, settled $r:=\varrho(x,x_0)$, where $\varrho$ denotes the geodesic distance and $x_0\in{\mathbb H}^N$ denotes the pole, we wish to answer the question   
\\

\noindent \it Does there exist positive constants $C$ and $\gamma$ such that the following Poincar\'e-Hardy inequality
\begin{equation}\label{motivation}
\int_{\mathbb{H}^{N}} |\nabla_{\mathbb{H}^{N}}^{k} u|^{2}  \ dv_{\mathbb{H}^{N}} - \left( \frac{N-1}{2} \right)^{2(k - l)} 
\int_{\mathbb{H}^{N}} |\nabla_{\mathbb{H}^{N}}^{l} u|^2 \ dv_{\mathbb{H}^{N}}\ge C \int_{{\mathbb H}^N}\,\frac{u^2}{r^{\gamma}}\,{\rm d}v_{\hn}\,
\end{equation}
holds for all $u\in H^k(\hn)$?
\vspace{.3truecm}
\rm
\\

The literature on improved Hardy and Rellich inequalities in the Euclidean setting dates back to the seminal works of Brezis-Vazquez \cite{Brezis} and  Brezis-Marcus \cite{BrezisM}. Without claiming of completeness, we also recall \cite{A,GFT,BFT2,BT,DH,FT,FTT,gaz,GM,MMP,R,TZ} and references therein. The reason of such a great interest is surely do to the fact that Hardy inequalities and their improved versions have various applications in the theory of partial differential equations and nonlinear analysis, see for istance \cite{Brezis,vaz,VZ}. Further generalizations to Riemannian manifolds are quite recent and a subject of intense research after the work of Carron \cite{Carron}. We enlist few important recent works \cite{Mitidieri1,Mitidieri2,Dambrosio,pinch,Kombe1,Kombe2,LW,Mitidieri,Yang} and references therein. Most of these works deals with classical Hardy inequalities and their improvement on Riemannian manifolds. Namely, differently from \eqref{motivation}, the optimal Hardy constant is taken as fixed and one looks for bounds of the constant in front of other remainder terms. The main motivation of our study initiated in \cite{BGG} on improved Poincar\'e inequalities comes from a paper of Devyver-Fraas-Pinchover \cite{pinch}, which deals with optimal Hardy inequalities for general second order operators. In particular, the existence of at least one Hardy-type remainder term for \eqref{high} with $k=1$ and $l=0$ follows as an application of their results. Nevertheless, their weight is given in terms of the Green's function of the associated operator and does not imply the validity of an inequality like \eqref{motivation}. See \cite{BGG} for further details. The same can be said for the inequality in \cite[Example 5.3]{BMR} where $N=3$. The above mentioned goal was achieved in \cite{BGG} where, developing a suitable construction of super solution, the following inequality was shown
\par \bigskip\par
 {$\bullet$ \bf{Case $k = 1$ and $l=0$.}} For $N>2$ and for all $ u \in C^{\infty}_{0}(\mathbb{H}^{N} )$ there holds 
 \begin{equation}\label{poincareeq} 
\int_{\mathbb{H}^{N}} |\nabla_{\hn} u|^2 \ dv_{\hn}-  \left( \frac{N-1}{2} \right)^{2} \int_{\mathbb{H}^{N}} u^2 \ dv_{\hn}\,
\geq \frac{1}{4} \int_{\mathbb{H}^{N}} \frac{u^2}{r^2} \ dv_{\hn}\,,
\end{equation}
where the constants $\left( \frac{N-1}{2} \right)^{2}$ and $\frac{1}{4}$ are sharp.
\par \medskip\par
Unfortunately, the super solution construction applied in the proof of \eqref{poincareeq} seems not applicable to the higher order case. Nevertheless, by exploiting a completely different technique based on spherical harmonics, in \cite{BGG} the following second order analogue of \eqref{poincareeq} was obtained

\par \bigskip\par

 {\bf{$\bullet$ Case $k = 2$ and $l=0$.}} For $N>4$ and for all $ u \in C^{\infty}_{0}(\mathbb{H}^{N})$ there holds

\begin{align}\label{PR} 
 \int_{\mathbb{H}^{N}} (\Delta_{\hn} u)^2  \ dv_{\hn} -  \left( \frac{N-1}{2} \right)^{4}\int_{\hn} u^2 \ dv_{\hn} 
\geq \frac{(N-1)^2}{8} \int_{\mathbb{H}^{N}} \frac{u^2}{r^2} \ dv_{\hn} + \frac{9}{16} \int_{\mathbb{H}^{N}} \frac{u^2}{r^4} \ dv_{\hn} \,,
\end{align}
where the constant $\left( \frac{N-1}{2} \right)^{4}$ and $\frac{(N-1)^2}{8}$ are sharp.

\par \medskip\par
It is clear that \eqref{poincareeq}  and \eqref{PR}  do not give a complete proof of \eqref{motivation}. The aim of the present paper is either to generalize to the higher order \eqref{poincareeq}  and \eqref{PR} and to investigate all the remaining cases when $l\neq 0$. A first step in this direction is represented by the proof of the validity of \eqref{motivation} when $k = 2$ and $ l= 1$. This case is not covered by \eqref{poincareeq} and \eqref{PR} and its proof requires some effort. A clever transformation which uncovers the Poincar\'e term and spherical harmonics technique are the main tools applied, see Sections \ref{21}  and \ref {proof21}. Also we note that when $k = 2$ and $ l= 1$ further singular remainder terms, involving hyperbolic functions, are provided and some optimality issues are proved. Namely, we have  

\begin{thm} \label{main_intro0}
 { \bf{(Case $k = 2$ and $l=1$)}} 
Let $ N >4$. For all $ u \in C^{\infty}_{0}(\mathbb{H}^{N})$ there holds

$$
\int_{\mathbb{H}^{N}} (\Delta_{\hn} u)^2 \ dv_{\hn} - \left( \frac{N-1}{2} \right)^{2} \int_{\mathbb{H}^{N}} |\nabla_{\hn} u|^2 \ dv_{\hn} \geq  \frac{(N-1)^2}{16} \int_{\mathbb{H}^{N}} \frac{u^2}{r^2} \ dv_{\hn} 
 + \frac{9}{16} \int_{\mathbb{H}^{N}} \frac{u^2}{r^4} \ dv_{\hn} $$
 $$+\frac{(N-1)(N-3)(N^2 -2N - 7)}{16} \int_{\hn} \frac{u^2}{\sinh^2 r} \  dv_{\hn} +\frac{(N-1)(N-3)(N^2-4N-3)}{16} \int_{\hn} \frac{u^2}{\sinh^4 r} \ dv_{\hn}.
$$
The constant $\left( \frac{N-1}{2} \right)^{2}$ is sharp by construction and sharpness of the other constants is discussed in Section 2.  
\end{thm}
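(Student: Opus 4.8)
The plan is to first uncover the first-order Poincar\'e term by a purely algebraic identity, then decouple the problem over spherical harmonics and reduce it, through a ground-state type substitution, to a family of one-dimensional inequalities whose four remainder terms are produced by classical weighted Hardy/Rellich inequalities on the half-line. Set $\mu:=\left(\frac{N-1}{2}\right)^{2}$. The starting point is the elementary identity, obtained by integration by parts and valid for every $u\in C_{0}^{\infty}(\hn)$,
\[
\int_{\hn}(\Delta_{\hn}u)^{2}\,dv_{\hn}-\mu\int_{\hn}|\nabla_{\hn}u|^{2}\,dv_{\hn}
=\int_{\hn}\big(\Delta_{\hn}u+\mu u\big)^{2}\,dv_{\hn}
+\mu\Big(\int_{\hn}|\nabla_{\hn}u|^{2}\,dv_{\hn}-\mu\int_{\hn}u^{2}\,dv_{\hn}\Big).
\]
Both summands on the right are nonnegative (the second by \eqref{poincareeq}), so it suffices to bound each of them from below; the second will produce the $\frac{(N-1)^{2}}{16}\int u^{2}/r^{2}$ term together with part of the $\sinh^{-2}r$ term.

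Next I would pass to geodesic polar coordinates about the pole and expand $u=\sum_{n,m}u_{n,m}(r)Y_{n,m}(\omega)$ in spherical harmonics, so that every integral above decouples over the modes. On the $n$-th mode, conjugation by $(\sinh r)^{(N-1)/2}$ turns $\Delta_{\hn}$ into $\partial_{r}^{2}-\mu-\frac{c_{n}}{\sinh^{2}r}$, with $c_{n}=\frac{(2n+N-1)(2n+N-3)}{4}$, and turns $(\sinh r)^{N-1}\,dr$ into $dr$; this is the transformation that makes the computation tractable. Writing $g:=(\sinh r)^{(N-1)/2}u_{n,m}$ and $c:=c_{n}$, the left-hand side of the identity above becomes, mode by mode,
\[
\int_{0}^{\infty}\Big(g''-\tfrac{c}{\sinh^{2}r}\,g\Big)^{2}dr+\mu\int_{0}^{\infty}(g')^{2}\,dr+\mu c\int_{0}^{\infty}\frac{g^{2}}{\sinh^{2}r}\,dr,
\]
while the right-hand side of the theorem decouples into the same integrals of $g^{2}/r^{4}$, $g^{2}/r^{2}$, $g^{2}/\sinh^{2}r$, $g^{2}/\sinh^{4}r$ with the stated constants and no $n$-dependence. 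The hypothesis $N>4$ ensures that all boundary terms produced by the integrations by parts below vanish at $r=0$.

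The heart of the matter is the family of one-dimensional inequalities. Expanding the square (integration by parts, using $\cosh^{2}r=1+\sinh^{2}r$),
\[
\int_{0}^{\infty}\!\Big(g''-\tfrac{c}{\sinh^{2}r}\,g\Big)^{2}dr=\int_{0}^{\infty}\!(g'')^{2}\,dr+2c\!\int_{0}^{\infty}\!\frac{(g')^{2}}{\sinh^{2}r}\,dr-4c\!\int_{0}^{\infty}\!\frac{g^{2}}{\sinh^{2}r}\,dr+(c^{2}-6c)\!\int_{0}^{\infty}\!\frac{g^{2}}{\sinh^{4}r}\,dr .
\]
Into this I would feed: the half-line Rellich chain $\int_{0}^{\infty}(g'')^{2}\,dr\ge\tfrac14\int_{0}^{\infty}(g')^{2}r^{-2}\,dr\ge\tfrac{9}{16}\int_{0}^{\infty}g^{2}r^{-4}\,dr$, for the $r^{-4}$ term; the Hardy inequality $\int_{0}^{\infty}(g')^{2}\,dr\ge\tfrac14\int_{0}^{\infty}g^{2}r^{-2}\,dr$ applied to the $\mu\int(g')^{2}$ piece, giving the $r^{-2}$ term with the sharp constant $\mu/4=(N-1)^{2}/16$; and the identity
\[
\int_{0}^{\infty}\frac{(g')^{2}}{\sinh^{2}r}\,dr=\int_{0}^{\infty}(v')^{2}\,dr+\int_{0}^{\infty}\frac{g^{2}}{\sinh^{2}r}\,dr+2\int_{0}^{\infty}\frac{g^{2}}{\sinh^{4}r}\,dr,\qquad v:=\frac{g}{\sinh r},
\]
which, combined with $\int_{0}^{\infty}(v')^{2}\,dr\ge\tfrac14\int_{0}^{\infty}v^{2}r^{-2}\,dr\ge\tfrac14\int_{0}^{\infty}g^{2}\sinh^{-4}r\,dr$, yields the sharp weighted Hardy bound $\int_{0}^{\infty}(g')^{2}\sinh^{-2}r\,dr\ge\int_{0}^{\infty}g^{2}\sinh^{-2}r\,dr+\tfrac94\int_{0}^{\infty}g^{2}\sinh^{-4}r\,dr$. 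Collecting terms with $c=c_{0}=\tfrac{(N-1)(N-3)}{4}$, the $\sinh^{-4}r$ coefficient equals $\tfrac94 c_{0}+c_{0}^{2}-6c_{0}=c_{0}^{2}-\tfrac32 c_{0}=\tfrac{(N-1)(N-3)(N^{2}-4N-3)}{16}$ and the $\sinh^{-2}r$ coefficient equals $2c_{0}-4c_{0}+\mu c_{0}=c_{0}(\mu-2)=\tfrac{(N-1)(N-3)(N^{2}-2N-7)}{16}$, exactly as in the statement. For $n\ge1$ one writes $c_{n}=c_{0}+\lambda_{n}$ with $\lambda_{n}=n(n+N-2)>0$ and checks, with the same two last inequalities, that the mode-$n$ left-hand side exceeds the mode-$0$ one by at least $\lambda_{n}(\mu-2)\int g^{2}\sinh^{-2}r\,dr+\lambda_{n}(2c_{0}+\lambda_{n}-\tfrac32)\int g^{2}\sinh^{-4}r\,dr\ge0$ for $N>4$; hence the $n$-th inequality follows from the $n=0$ one, and summing over $n,m$ finishes the proof.

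I expect the main obstacle to be the bookkeeping that makes all four constants come out simultaneously and sharply: in particular, recognising that the $\sinh^{-2}r$ remainder requires the sharp bound $\int_{0}^{\infty}(g')^{2}\sinh^{-2}r\,dr\ge\int_{0}^{\infty}g^{2}\sinh^{-2}r\,dr+\tfrac94\int_{0}^{\infty}g^{2}\sinh^{-4}r\,dr$ obtained through the substitution $g\mapsto g/\sinh r$, and not the weaker estimate (with coefficient $\tfrac34$ in front of $\int g^{2}\sinh^{-2}r\,dr$) that the $(\sinh r)^{3/2}$ ground-state representation would supply, which leaves a gap of exactly $\tfrac{c_{0}}{2}\int g^{2}\sinh^{-2}r\,dr$. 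The remaining points — the vanishing of all boundary terms, which is precisely where $N>4$ is used, and the monotonicity-in-$n$ estimate — are routine but not vacuous. The optimality of $\mu$ is immediate since it is the Poincar\'e constant; the sharpness of $9/16$ and of the two new constants, addressed in Section 2, is obtained by testing against truncations of the virtual ground states of the auxiliary inequalities, which near the pole behave like $r^{3/2}$.
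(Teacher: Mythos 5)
Your proposal is correct and follows essentially the same route as the paper: the ground-state substitution $(\sinh r)^{(N-1)/2}u$, decomposition in spherical harmonics, the one-dimensional Hardy and Rellich inequalities together with the weighted bound $\int_0^\infty \frac{(g')^2}{\sinh^2 r}\,dr\ge \int_0^\infty \frac{g^2}{\sinh^2 r}\,dr+\frac94\int_0^\infty \frac{g^2}{\sinh^4 r}\,dr$ (which is exactly the paper's Lemma 5.2, here rederived via $g\mapsto g/\sinh r$), and the observation that the resulting coefficients are minimized at the radial mode $n=0$, your preliminary splitting $\int(\Delta_{\hn}u)^2-\mu\int|\nabla_{\hn}u|^2=\int(\Delta_{\hn}u+\mu u)^2+\mu\bigl(\int|\nabla_{\hn}u|^2-\mu\int u^2\bigr)$ being only a cleaner bookkeeping of the same mode-wise quantities. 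Only a typographical slip: the intermediate $\sinh^{-4}r$ contribution should read $2c_0\cdot\tfrac94=\tfrac92 c_0$ rather than $\tfrac94 c_0$, which is what your final (correct) value $c_0^2-\tfrac32 c_0$ actually uses.
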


Theorem \ref{main_intro0} turns out to be one of the key ingredients in our strategy to get the arbitrary case, i.e. inequality \eqref{motivation} for every $l <k$. Furthermore, from Theorem \ref{main_intro0} we derive improved Rellich type inequalities on upper half space of the Euclidean space having their own interest. See Corollary \ref{cor2} for the details. The technique adopted relies on the so-called \lq\lq Conformal Transformation" to the Euclidean space. 
\par \medskip\par
As concerns the general case $l <k$, a fine combination of the previous results and some technical inequalities allow us to finally derive the following family of inequalities
\begin{thm}\label{main_intro}  { \bf{(Case $0\leq l < k$)}} 
Let $k,l$ be integers such that $0\leq l < k$ and let $ N> 2k$. There exist $k$ \emph{positive} constants $\alpha_{k,l}^j=\alpha_{k,l}^j(N)$ such that the following inequality holds

 $$
\int_{\mathbb{H}^{N}} |\nabla_{\mathbb{H}^{N}}^{k} u|^{2}  \ dv_{\mathbb{H}^{N}} -\left( \frac{N-1}{2} \right)^{2(k - l)} 
\int_{\mathbb{H}^{N}} | \nabla^{l}_{\mathbb{H}^{N}} u|^{2} \ dv_{\mathbb{H}^{N}}\ge \sum_{j = 1}^{k} \alpha_{k,l}^j \int_{\mathbb{H}^{N}} \frac{u^2}{r^{2j}} \ dv_{\mathbb{H}^{N}}
$$
for all $u \in C^{\infty}_{0}(\mathbb{H}^{N})$. Furthermore, the constant $\left( \frac{N-1}{2} \right)^{2(k - l)} $ is sharp and the leading terms as $r\rightarrow 0$ and  $r\rightarrow +\infty$, namely $\alpha_{k,l}^{1}$ and $\alpha_{k,l}^{k}$, are given explicitly in Theorems \ref{mainhigher0} and \ref{mainmain} below. 
 
 \end{thm}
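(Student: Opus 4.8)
The plan is to argue by induction, peeling off one iterate of the Laplace--Beltrami operator at a time and reducing the general pair $(k,l)$ to the three base cases already established in the excerpt, namely $(1,0)$ (inequality \eqref{poincareeq}), $(2,0)$ (inequality \eqref{PR}) and $(2,1)$ (Theorem \ref{main_intro0}). First I would split into the two parities of $k-l$. If $k-l$ is even, I would write $|\nabla_{\hn}^{k}u|^{2}=|\Delta_{\hn}^{(k-l)/2}v|^{2}$ with $v:=\nabla_{\hn}^{l}u$ (interpreting $v$ as a scalar or vector field according to the parity of $l$), so that the left-hand side becomes exactly the $(k-l,0)$ deficit evaluated at $v$; if $k-l$ is odd the same substitution reduces matters to the $(k-l,0)$ case which in turn, using one more integration by parts, reduces to a $(k-l-1,0)$ problem plus a first-order Poincar\'e term. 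Thus the crux is to establish the family for $l=0$ and all $k$, and then to transfer the weighted remainder terms back through the operator $\nabla_{\hn}^{l}$.

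Second, for the $l=0$ tower I would iterate. Suppose inductively that
$$
\int_{\hn}|\nabla_{\hn}^{m}u|^{2}\,dv_{\hn}-\Bigl(\tfrac{N-1}{2}\Bigr)^{2m}\int_{\hn}u^{2}\,dv_{\hn}\ \ge\ \sum_{j=1}^{m}\alpha_{m,0}^{j}\int_{\hn}\frac{u^{2}}{r^{2j}}\,dv_{\hn}
$$
holds. Applying this with $u$ replaced by $\Delta_{\hn}u$ (or by the components of $\nabla_{\hn}u$, depending on parity) and then using \eqref{poincareeq} and \eqref{PR} to absorb the lower-order pieces and to convert $\int u^{2}$-type bounds into $\int u^{2}/r^{2}$ and $\int u^{2}/r^{4}$ contributions, one produces the $(m+2,0)$ inequality; the single-step $m\mapsto m+1$ is handled similarly using \eqref{poincareeq}. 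The new constants $\alpha_{m+2,0}^{j}$ are then explicit affine combinations of the old $\alpha_{m,0}^{j}$ and of $1/4$, $(N-1)^{2}/8$, $9/16$; positivity is preserved because every coefficient entering the recursion is positive for $N>2k$. The leading coefficients $\alpha_{k,0}^{1}$ and $\alpha_{k,0}^{k}$ satisfy scalar linear recursions that can be solved in closed form, giving the explicit formulas referred to in Theorems \ref{mainhigher0} and \ref{mainmain}.

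Third, to pass from $l=0$ to general $l$ I would use Theorem \ref{main_intro0} as the engine: writing $w:=\nabla_{\hn}^{l}u$ and noting $|\nabla_{\hn}^{k}u|^{2}=|\nabla_{\hn}^{k-l}w|^{2}$, the deficit on the left is the $(k-l,l)$-type quantity, but the \emph{weighted} right-hand side must be expressed in terms of $u$, not $w$. This is where the $(2,1)$ result enters: it precisely controls $\int(\Delta_{\hn}u)^{2}-\bigl(\tfrac{N-1}{2}\bigr)^{2}\int|\nabla_{\hn}u|^{2}$ from below by weighted integrals of $u^{2}$, i.e.\ it lets one ``descend'' one derivative while keeping a Hardy remainder in $u$. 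Chaining such descents, together with the spherical-harmonics decomposition used for Theorem \ref{main_intro0} to keep track of how the weights $r^{-2j}$, $\sinh^{-2}r$, $\sinh^{-4}r$ transform, yields the full statement; the $\sinh$-type terms from Theorem \ref{main_intro0} are discarded (they are nonnegative) after being bounded below by suitable multiples of $r^{-2j}$ near the origin and by $0$ at infinity, which only affects the constants.

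The main obstacle I anticipate is bookkeeping the remainder terms through the descent step: when one replaces $u$ by $\nabla_{\hn}^{l}u$ in a Hardy-type inequality, the weighted term $\int |\nabla_{\hn}^{l}u|^{2}/r^{2j}$ is \emph{not} directly comparable to $\int u^{2}/r^{2(j+l)}$, so one needs a higher-order weighted Hardy inequality on $\hn$ (a Rellich-type bound with the correct power of $r$) to commute derivatives past the singular weight, and one must verify that the constants produced this way stay positive for all $N>2k$ — this positivity check, uniform in $k,l$, together with showing the optimal Poincar\'e constant $\bigl(\tfrac{N-1}{2}\bigr)^{2(k-l)}$ is not spoiled (sharpness, via a concentration/truncation family of test functions as in \cite{BGG}), is the technically delicate heart of the argument.
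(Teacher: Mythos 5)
Your overall architecture (induction on the number of Laplacians, reduce general $l$ to the $l=0$ tower plus the $(2,1)$ case, keep track of explicit positive constants) is the same as the paper's, but the proposal has a genuine gap exactly at the point you flag as "the technically delicate heart" and then leave unresolved: you never supply the tool that commutes the singular weight past the Laplacian. In the $l=0$ induction step, applying the inductive hypothesis to $\Delta_{\hn}u$ produces terms of the form $\int_{\hn}(\Delta_{\hn}u)^2/r^{2j}\,dv_{\hn}$ for $j\ge 1$, and neither \eqref{poincareeq} nor \eqref{PR} can convert these into weighted integrals of $u^2$ — those inequalities are unweighted. The same problem recurs, compounded, in your descent from general $l$: $\int|\nabla_{\hn}^{l}u|^2/r^{2j}$ is not comparable to $\int u^2/r^{2(j+l)}$ by anything you have quoted. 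The paper closes precisely this gap with the weighted Hardy--Rellich inequality \eqref{yangsukong} of Yang--Su--Kong (\cite[Theorem 4.4]{Yang}),
\begin{equation*}
\int_{\mathbb{H}^{N}} \frac{(\Delta u)^2}{r^{\beta}}\,dv_{\mathbb{H}^{N}}\;\geq\;\frac{(N+\beta)^2(N-\beta-4)^2}{16}\int_{\mathbb{H}^{N}} \frac{u^2}{r^{4+\beta}}\,dv_{\mathbb{H}^{N}}+\cdots+\frac{(N-1)^2}{16}\int_{\mathbb{H}^{N}} \frac{u^2}{r^{\beta}}\,dv_{\mathbb{H}^{N}},
\end{equation*}
valid for $0\le\beta<N-4$, and its iterate \eqref{yangxtended} for $\Delta^{\gamma}$; these are applied after every use of the inductive hypothesis and after \eqref{global} or \eqref{npoincare} in the mixed-parity cases. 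Without this lemma (or a proof of an equivalent weighted Rellich bound on $\hn$), your recursion does not close, and the positivity of the resulting constants for $N>2k$ — which in the paper follows from the explicit dimensional restrictions $\beta<N-4\gamma$ in \eqref{yangxtended} — cannot even be formulated.

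Two smaller points. First, your third step proposes redoing the spherical-harmonics analysis of Theorem \ref{main_intro0} to track how the weights transform under the descent; the paper does not need this — the $(2,1)$ inequality \eqref{npoincare} is used only once as a black box, and all weight bookkeeping is done through \eqref{yangxtended}. Second, the sharpness of $\left(\frac{N-1}{2}\right)^{2(k-l)}$ needs no new test-function construction: it is "sharp by construction" because it is already the (unattained) infimum in the higher order Poincar\'e inequality \eqref{high}, so any larger constant would contradict that known optimality.
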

In view of possible applications to differential equations, we point out that the strategy of our proofs basically allows to determine explicitly all the constants $\alpha_{k,l}^j$ in Theorem \ref{main_intro}. Nevertheless, for the sake of simplicity, we prefer to focus on the leading terms $\alpha_{k,l}^{1}$ and $\alpha_{k,l}^{k}$. This choice is also justified by the fact that our interest is devoted to the non-Euclidean behavior of inequalities and the constant highlighting this aspect is exactly $\alpha_{k,l}^{1}$, i.e. the constant in front of the leading term as $r\rightarrow +\infty$. As a matter of example, here below we specify our family of inequalities for some particular choices of $k$ and $l$.
 \begin{cor}\label{mainhigher0} { \bf{(Case $0= l < k$)}} 
Let $k$ be a positive integer and let $ N> 2k$.\par
If $k=2m$ for some positive integer $m$, there holds 
$$
\int_{\mathbb{H}^{N}}  (\Delta_{\hn}^{m} u)^2 \ dv_{\mathbb{H}^{N}} -\left( \frac{N-1}{2} \right)^{4m } 
\int_{\mathbb{H}^{N}} u^2 \ dv_{\mathbb{H}^{N}}\geq $$
$$ \sum_{j = 1}^{ m} \frac{(N-1)^{4m-2j}}{2^{4m-1}} \int_{\mathbb{H}^{N}} \frac{u^2}{r^{2}} \ dv_{\mathbb{H}^{N}}+ \frac{9}{2^{4m}} \prod_{j = 1}^{ m -1} (N+ 4j)^2(N-4j-4)^2 \int_{\mathbb{H}^{N}} \frac{u^2}{r^{4m}} \ dv_{\mathbb{H}^{N}}
 $$
for all $u \in C^{\infty}_{0}(\mathbb{H}^{N})$, where we use the convention $\prod_{j = 1}^{ 0}=1$.\par
 If $k=2m+1$ for some positive integer $m$, there holds
 $$
\int_{\mathbb{H}^{N}}  |\nabla_{\hn} (\Delta_{\hn}^{m} u)|^{2}  \ dv_{\mathbb{H}^{N}}  -\left( \frac{N-1}{2} \right)^{4m+2 } 
\int_{\mathbb{H}^{N}} u^2 \ dv_{\mathbb{H}^{N}}\geq $$
$$ \left[ \sum_{j = 1}^{ m} \frac{(N-1)^{4m-2j+2}}{2^{4m+1}}+ \frac{(N-1)^{2m}}{2^{4m+2}} \right]\int_{\mathbb{H}^{N}} \frac{u^2}{r^{2}} 
\ dv_{\mathbb{H}^{N}}+ \frac{1}{2^{4m+2}} \prod_{j = 1}^{ m} (N+ 4j - 2)^2(N- 4j - 2)^2 \int_{\mathbb{H}^{N}} \frac{u^2}{r^{4m+2}} \ dv_{\mathbb{H}^{N}}
 $$
for all $u \in C^{\infty}_{0}(\mathbb{H}^{N})$.

 \end{cor}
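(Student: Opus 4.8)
The strategy is to derive the two displayed inequalities (the case $k=2m$ and the case $k=2m+1$) from the already-established lower-order results by an iteration that, at each stage, strips off two orders of the operator. The basic building blocks are: inequality \eqref{PR} (the case $k=2$, $l=0$, which handles the passage from $\Delta_\hn$ to the identity), Theorem \ref{main_intro0} (the case $k=2$, $l=1$, which handles the passage from $\Delta_\hn$ to $\nabla_\hn$), and the higher order Poincaré inequality \eqref{high} itself. I would first observe that for the even case $k=2m$ one may write $\int_\hn (\Delta_\hn^m u)^2\,dv_\hn = \int_\hn (\Delta_\hn v)^2\,dv_\hn$ with $v=\Delta_\hn^{m-1}u$, apply \eqref{PR} to $v$, and then iterate: the Poincaré term $\left(\tfrac{N-1}{2}\right)^4\int_\hn (\Delta_\hn^{m-1}u)^2$ is again of the form $\int_\hn(\Delta_\hn w)^2$ with one fewer iterate, so \eqref{PR} applies once more, and so on down to $\int_\hn u^2$. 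The remainder terms produced at each step are of the form $\frac{(N-1)^2}{8}\int_\hn \frac{(\Delta_\hn^{j}u)^2}{r^2}$ and $\frac{9}{16}\int_\hn\frac{(\Delta_\hn^{j}u)^2}{r^4}$; the job is then to bound each of these from below by a constant times $\int_\hn \frac{u^2}{r^2}$ or $\int_\hn \frac{u^2}{r^{4m}}$, which is exactly the content of the technical inequalities alluded to before Theorem \ref{main_intro} and which I would invoke or re-derive in the form $\int_\hn \frac{(\Delta_\hn^j u)^2}{r^{2i}}\,dv_\hn \ge c_{j,i}\int_\hn \frac{u^2}{r^{2i}}\,dv_\hn$ with $c_{j,i}$ computable via spherical-harmonic decomposition and the one-dimensional Rellich-type estimates.

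For the odd case $k=2m+1$ I would split $\int_\hn |\nabla_\hn(\Delta_\hn^m u)|^2\,dv_\hn$ by first applying Theorem \ref{main_intro0} with $u$ replaced by $\Delta_\hn^{m-1}u$ — since $|\nabla_\hn(\Delta_\hn^m u)|^2 = |\nabla_\hn \Delta_\hn(\Delta_\hn^{m-1}u)|^2$ matches $|\Delta_\hn(\cdot)|^2$ only after one more care, so more precisely I would first peel off the gradient using the case $k=2$, $l=1$ applied to $w=\Delta_\hn^{m-1}u$ to get $\int_\hn(\Delta_\hn w)^2 \ge \left(\tfrac{N-1}{2}\right)^2\int_\hn|\nabla_\hn w|^2 + (\text{Hardy remainders})$, and then iterate \eqref{PR} on the resulting $\int_\hn(\Delta_\hn \cdot)^2$ terms down to $\int_\hn u^2$, finally using \eqref{high} with $k=1,l=0$ to convert the leftover $\int_\hn|\nabla_\hn u|^2$ or keeping it as a Poincaré term. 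The bookkeeping of which power of $(N-1)$ and which combinatorial product $\prod_j(N+4j-2)^2(N-4j-2)^2$ appears is then a matter of carefully tracking the constants through the iteration, where the product structure comes from the successive application of the sharp constants in the one-dimensional weighted estimates $\int_0^\infty (g'')^2 \ge c\int_0^\infty g^2/t^4$ with $c$ depending on the spherical-harmonic eigenvalue.

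The sum $\sum_{j=1}^m$ in the $r^{-2}$ coefficient reflects that each of the $m$ applications of \eqref{PR} contributes one term $\frac{(N-1)^{4m-2j}}{2^{4m-1}}$ after the remaining Poincaré factors $\left(\tfrac{N-1}{2}\right)^{4(m-j)}$ are absorbed; I would make this precise by an explicit induction on $m$, the inductive step being the single application of \eqref{PR} plus the monotonicity estimate $\int_\hn \frac{(\Delta_\hn v)^2}{r^2} \ge \left(\tfrac{N-1}{2}\right)^2 \int_\hn \frac{v^2}{r^2}$ (which itself follows from \eqref{poincareeq} restricted appropriately, or from a direct spherical-harmonics computation). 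The main obstacle, and the step requiring the most care, is establishing the sharp weighted estimates $\int_\hn \frac{(\Delta_\hn^j u)^2}{r^{2i}} \ge c_{j,i}\int_\hn \frac{u^2}{r^{2i}}$ with the \emph{correct} constants $c_{j,i}$ — in particular, showing that for the top-order weight $r^{-4m}$ (resp. $r^{-4m-2}$) the iterated constant is exactly $\frac{9}{2^{4m}}\prod_{j=1}^{m-1}(N+4j)^2(N-4j-4)^2$ (resp. $\frac{1}{2^{4m+2}}\prod_{j=1}^{m}(N+4j-2)^2(N-4j-2)^2$). This requires decomposing $u$ into spherical harmonics, reducing to a one-dimensional problem on $(0,\infty)$ with the Euclidean-type weight, and then applying a sharp one-dimensional Rellich inequality whose constant depends on the angular eigenvalue; the product telescopes precisely because each application of the Laplacian shifts the effective dimension by $4$, which is why the factors $(N\pm 4j-\text{const})$ appear. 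Once these weighted estimates are in hand with their sharp constants, assembling the two displayed inequalities is a bounded, if lengthy, computation, and the sharpness of $\left(\tfrac{N-1}{2}\right)^{2(k-l)}$ follows as in Theorem \ref{main_intro} by testing with a suitable family of radial functions concentrating the mass at infinity.
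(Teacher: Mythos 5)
Your overall scheme -- induct on $m$, peel off two orders at a time with \eqref{PR} (and \eqref{poincareeq} for the odd case), and push the weighted remainders $\int (\Delta^j_{\hn} u)^2 r^{-\beta}\,dv_{\hn}$ down to weighted $L^2$-norms of $u$ -- is exactly the paper's strategy. The difference is that you have deferred precisely the ingredient that makes the constants come out as stated. The paper does \emph{not} re-derive the weighted estimates by spherical harmonics; it imports from \cite[Theorem 4.4]{Yang} the inequality \eqref{yangsukong}, which for $0\le\beta<N-4$ simultaneously provides the weight-raising term with constant $\tfrac{(N+\beta)^2(N-\beta-4)^2}{16}$ (whose iteration over $\beta=4,8,\dots$ or $\beta=2,6,\dots$ telescopes into the products $\prod (N+4j)^2(N-4j-4)^2$ and $\prod(N+4j-2)^2(N-4j-2)^2$) \emph{and} the same-weight term with constant $\tfrac{(N-1)^2}{16}$. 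That same-weight constant is exactly what produces the sum $\sum_{j=1}^m\frac{(N-1)^{4m-2j}}{2^{4m-1}}$ in the $r^{-2}$ coefficient. Your substitute, the ``monotonicity estimate'' $\int_{\hn}\frac{(\Delta_{\hn} v)^2}{r^2}\,dv_{\hn}\ge\bigl(\frac{N-1}{2}\bigr)^2\int_{\hn}\frac{v^2}{r^2}\,dv_{\hn}$, is asserted without proof and does not follow from \eqref{poincareeq} ``restricted appropriately'': \eqref{poincareeq} is an unweighted statement, and inserting the weight $r^{-2}$ on both sides is not a legitimate operation. Either you prove a weighted inequality of Yang's type with explicit constants (a nontrivial spherical-harmonics computation you only gesture at), or you cite \eqref{yangsukong}; as written, the quantitative core of the corollary is missing.

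There is also a concrete misstep in your odd case. Applying Theorem \ref{main_intro0} to $w=\Delta_{\hn}^{m-1}u$ gives a lower bound for $\int_{\hn}(\Delta_{\hn} w)^2\,dv_{\hn}=\int_{\hn}(\Delta_{\hn}^m u)^2\,dv_{\hn}$ in terms of $\int_{\hn}|\nabla_{\hn} w|^2\,dv_{\hn}$; it says nothing about your starting quantity $\int_{\hn}|\nabla_{\hn}(\Delta_{\hn}^m u)|^2\,dv_{\hn}$. The correct first move (the paper's) is to apply \eqref{poincareeq} to the function $\Delta_{\hn}^m u$, which yields $\bigl(\frac{N-1}{2}\bigr)^2\int_{\hn}(\Delta_{\hn}^m u)^2\,dv_{\hn}+\frac14\int_{\hn}\frac{(\Delta_{\hn}^m u)^2}{r^2}\,dv_{\hn}$; the second term, pushed down by $m$ weight-raising steps of \eqref{yangsukong} with $\beta=2,6,\dots$, is the \emph{only} source of the top-order weight $r^{-(4m+2)}$ and of the extra summand $\frac{(N-1)^{2m}}{2^{4m+2}}$ in the $r^{-2}$ coefficient. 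If instead you discard it (e.g.\ by using plain \eqref{high} for the gradient step), the stated constants for $k=2m+1$ are unreachable. So the plan is repairable along the paper's lines, but as it stands both the key weighted inequality and the odd-case reduction are gaps rather than proofs.
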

 
 \begin{cor}
  { \bf{(Case $k-1= l < k$)}} 
Let $k$ be a positive integer and let $ N> 2k$. \par
 If $k=2m$ for some positive integer $m$, there holds
$$
\int_{\mathbb{H}^{N}}  (\Delta_{\hn}^{m} u)^2 \ dv_{\mathbb{H}^{N}} - \left( \frac{N-1}{2} \right)^{2} \int_{\mathbb{H}^{N}} |\nabla_{\hn} (\Delta_{\hn}^{m-1} u)|^{2} \ dv_{\mathbb{H}^{N}}  \geq $$
$$\frac{(N-1)^{2m}}{2^{4m}} \int_{\mathbb{H}^{N}} \frac{u^2}{r^{2}} \ dv_{\mathbb{H}^{N}} + \frac{9}{2^{4m}} \prod_{j = 1}^{ m-1} ((N+ 4j )(N- 4j -4))^2 \int_{\mathbb{H}^{N}} \frac{u^2}{r^{4m}} \ dv_{\mathbb{H}^{N}}
$$
for all $u \in C^{\infty}_{0}(\mathbb{H}^{N})$, where we use the convention $\prod_{j = 1}^{ 0}=1$.
\par
 If $k=2m+1$ for some positive integer $m$, there holds
$$
\int_{\mathbb{H}^{N}}  |\nabla_{\hn} (\Delta_{\hn}^{m} u)|^{2}  \ dv_{\mathbb{H}^{N}} - \left( \frac{N-1}{2} \right)^{2} \int_{\mathbb{H}^{N}}  (\Delta_{\hn}^{m} u)^{2} \ dv_{\mathbb{H}^{N}}  \geq $$
$$\frac{(N-1)^{2m}}{2^{4m+2}} \int_{\mathbb{H}^{N}} \frac{u^2}{r^{2}} \ dv_{\mathbb{H}^{N}} + \frac{1}{2^{4m+2}} \prod_{j = 1}^{m}(N+ 4j-2)^2(N - 4j- 2)^2 \int_{\mathbb{H}^{N}} \frac{u^2}{r^{4m+2}} \ dv_{\mathbb{H}^{N}} $$
for all $u \in C^{\infty}_{0}(\mathbb{H}^{N})$.
 \end{cor}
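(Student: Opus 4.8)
The plan is to reduce the claimed inequality, according to the parity of $k$, to the two base cases already available --- Theorem \ref{main_intro0} when $k$ is even, and the Poincar\'e--Hardy inequality \eqref{poincareeq} when $k$ is odd --- and then to propagate the resulting weighted remainder terms down to $u$ by iterating a weighted Rellich-type inequality on $\hn$.

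First I would perform the reduction. Fix $u\in C_0^\infty(\hn)$. If $k=2m$, set $v:=\Delta_{\hn}^{m-1}u$; since $\Delta_{\hn}$ is local we have $v\in C_0^\infty(\hn)$, and by definition $\nabla_{\hn}^{k}u=\Delta_{\hn}v$ and $\nabla_{\hn}^{k-1}u=\nabla_{\hn}v$, so the left-hand side of the claim equals $\int_{\hn}(\Delta_{\hn}v)^2\,dv_{\hn}-\left(\frac{N-1}{2}\right)^2\int_{\hn}|\nabla_{\hn}v|^2\,dv_{\hn}$, precisely the functional estimated in Theorem \ref{main_intro0}. If $k=2m+1$, set instead $v:=\Delta_{\hn}^{m}u\in C_0^\infty(\hn)$; then $\nabla_{\hn}^{k}u=\nabla_{\hn}v$ and $\nabla_{\hn}^{k-1}u=v$, and the left-hand side equals $\int_{\hn}|\nabla_{\hn}v|^2\,dv_{\hn}-\left(\frac{N-1}{2}\right)^2\int_{\hn}v^2\,dv_{\hn}$, the functional of \eqref{poincareeq}. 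Applying Theorem \ref{main_intro0}, respectively \eqref{poincareeq}, and --- in the even case --- discarding the two hyperbolic remainder terms, which are nonnegative because $N>2k=4m$ forces $N\ge5$ and hence $(N-1)(N-3)(N^2-2N-7)>0$ and $(N-1)(N-3)(N^2-4N-3)>0$, I obtain that the left-hand side of the claim is bounded below by
\[
\frac{(N-1)^2}{16}\int_{\hn}\frac{(\Delta_{\hn}^{m-1}u)^2}{r^2}\,dv_{\hn}+\frac{9}{16}\int_{\hn}\frac{(\Delta_{\hn}^{m-1}u)^2}{r^4}\,dv_{\hn}\qquad (k=2m),
\]
and by
\[
\frac14\int_{\hn}\frac{(\Delta_{\hn}^{m}u)^2}{r^2}\,dv_{\hn}\qquad (k=2m+1).
\]

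The second and main ingredient is a weighted Rellich-type inequality on $\hn$: for all $w\in C_0^\infty(\hn)$ and every integer $a\ge0$ with $N>a+4$ (in particular for all the values used below),
\[
\int_{\hn}\frac{(\Delta_{\hn}w)^2}{r^a}\,dv_{\hn}\ \ge\ \frac{(N-1)^2}{16}\int_{\hn}\frac{w^2}{r^a}\,dv_{\hn}+\left(\frac{(N+a)(N-a-4)}{4}\right)^2\int_{\hn}\frac{w^2}{r^{a+4}}\,dv_{\hn},
\]
possibly with further nonnegative terms on the right, which I drop. I would prove this in the spirit of the higher order estimates of the present paper: decompose $w=\sum_n w_n(r)Y_n(\omega)$ in spherical harmonics so that the left-hand side splits mode by mode; for each $n$ reduce to a one-dimensional weighted inequality on $(0,\infty)$ with weight $(\sinh r)^{N-1}$, where the hyperbolic geometry enters only through the factor $(N-1)\coth r$ of the radial Laplacian; prove the one-dimensional inequalities by integration by parts and completion of squares; and finally replace the hyperbolic weights by powers of $r$ using $\sinh r\ge r$. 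For $k=2m$ only two one-term consequences are needed --- the first term alone with $a=2$, and the second term alone with $a=4,8,\dots,4(m-1)$ --- while the full two-term form is used only when $k$ is odd.

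The last step is the iteration and the bookkeeping of constants. For $k=2m$: applying the weighted Poincar\'e bound ($a=2$) $m-1$ times gives $\int_{\hn}r^{-2}(\Delta_{\hn}^{m-1}u)^2\,dv_{\hn}\ge\left(\frac{(N-1)^2}{16}\right)^{m-1}\int_{\hn}r^{-2}u^2\,dv_{\hn}$, while applying the weighted Rellich bound successively with $a=4,8,\dots,4(m-1)$ gives $\int_{\hn}r^{-4}(\Delta_{\hn}^{m-1}u)^2\,dv_{\hn}\ge\prod_{j=1}^{m-1}\left(\frac{(N+4j)(N-4j-4)}{4}\right)^2\int_{\hn}r^{-4m}u^2\,dv_{\hn}$; inserting these into the bound of the first step, multiplying by $\frac{(N-1)^2}{16}$ and $\frac{9}{16}$ respectively, and using $16^m=2^{4m}$, one recovers exactly the constants $\frac{(N-1)^{2m}}{2^{4m}}$ and $\frac{9}{2^{4m}}\prod_{j=1}^{m-1}((N+4j)(N-4j-4))^2$ in the statement (for $m=1$ both products are empty and the inequality reduces to Theorem \ref{main_intro0} with the hyperbolic terms dropped). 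For $k=2m+1$: starting from $\int_{\hn}r^{-2}(\Delta_{\hn}^{m}u)^2\,dv_{\hn}$, I iterate the full two-term inequality $m$ times and retain only the two extreme resulting terms --- the one obtained by always selecting the first term, which has coefficient $\left(\frac{(N-1)^2}{16}\right)^{m}$ and weight $r^{-2}$, and the one obtained by always selecting the second term, which passes through $a=2,6,10,\dots,4m-2$ and has coefficient $\prod_{j=1}^{m}\left(\frac{(N+4j-2)(N-4j-2)}{4}\right)^2$ and weight $r^{-(4m+2)}$ --- all the intermediate terms being nonnegative and hence harmless; multiplying by $\frac14$ produces the claimed constants $\frac{(N-1)^{2m}}{2^{4m+2}}$ and $\frac{1}{2^{4m+2}}\prod_{j=1}^{m}(N+4j-2)^2(N-4j-2)^2$. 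The hypothesis $N>2k$ is precisely what guarantees that every intermediate weighted Rellich constant is positive --- the most restrictive requirement coming from the last factor and amounting to $N>4m$ when $k=2m$ and to $N>4m+2$ when $k=2m+1$ --- so all $k$ remainder coefficients produced by this scheme are positive. The only genuine obstacle is the weighted Rellich inequality of the second step with those precise leading constants; once that is in hand, the first and third steps are the elementary bookkeeping just outlined.
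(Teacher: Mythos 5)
Your proposal is correct and follows essentially the same route as the paper: the corollary is the case $l=k-1$ of Theorems \ref{mainhighereo} (with $h=m-1$) and \ref{mainmain}, proved there exactly as you do, by applying \eqref{npoincare} to $\Delta_{\hn}^{m-1}u$ (resp. \eqref{poincareeq} to $\Delta_{\hn}^{m}u$) and then descending to $u$ by iterating a weighted Rellich inequality in powers of $r$. The only difference is that the weighted inequality you flag as the ``only genuine obstacle'' is not proved in the paper either but is quoted as \eqref{yangsukong} from \cite[Theorem 4.4]{Yang} (and iterated in \eqref{yangxtended}), with precisely the two constants you use, so your bookkeeping reproduces the stated coefficients.
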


The article is organized as follows. Section \ref{21} is devoted to the precise statement and discussion of results for the case $k = 2$ and $ l = 1.$ The complete proof of the results discussed in Section \ref{21} is postponed to Section \ref{proof21}. Section \ref{l0} and Section \ref{karbitrary} are devoted to discussions and proofs of the results for $0= l < k$ and for $0\neq l < k$. The statements of the results given in these sections will contain the precise constants for the leading terms mentioned in the statement of Theorem \ref{main_intro}.

\section{Case $k=2$ and $l=1$}\label{21}
We start by restating Theorem \ref{main_intro0} in its complete form. The proof of the results given in this section will be postponed to Section \ref{proof21}.
 \begin{thm}\label{PRHinequality}
 Let $ N >4$. For all $ u \in C^{\infty}_{0}(\mathbb{H}^{N})$ there holds

\begin{align}\label{npoincare}
\int_{\mathbb{H}^{N}} (\Delta_{\hn} u)^{2} \ dv_{\hn} - \left( \frac{N-1}{2} \right)^{2} \int_{\mathbb{H}^{N}} |\nabla_{\hn} u|^2 \ dv_{\hn} 
 &\geq  \frac{(N-1)^2}{16} \int_{\mathbb{H}^{N}} \frac{u^2}{r^2} \ dv_{\hn} 
 + \frac{9}{16} \int_{\mathbb{H}^{N}} \frac{u^2}{r^4} \ dv_{\hn} \notag \\
& +\frac{(N-1)(N-3)(N^2 -2N - 7)}{16} \int_{\hn} \frac{u^2}{\sinh^2 r} \  dv_{\hn} \notag \\ &+\frac{(N-1)(N-3)(N^2-4N-3)}{16} \int_{\hn} \frac{u^2}{\sinh^4 r} \ dv_{\hn}\,.
\end{align}

The constant $\left( \frac{N-1}{2} \right)^{2}$ is sharp by construction, namely cannot be replaced by a larger one. Furthermore, the constant $\frac{(N-1)^2}{16}$ is sharp in the sense that no inequality of the form
\[
\int_{\hn}   (\Delta_{\hn} u)^{2} \ dv_{\hn} - \left( \frac{N-1}{2} \right)^{2} \int_{\mathbb{H}^{N}} |\nabla_{\hn} u|^2 \ dv_{\hn} 
\geq c\, \int_{\hn} \frac{u^2}{r^2} \ dv_{\hn}
\]
holds for all $ u \in C^{\infty}_{c}(\mathbb{H}^{N})$ when $c>\frac{(N-1)^2}{16}$.
 
 \end{thm}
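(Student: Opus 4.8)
The plan is to reduce the problem to a one-dimensional inequality using a spherical harmonics decomposition adapted to the hyperbolic space. Writing a point $x \in \hn$ in geodesic polar coordinates $(r,\sigma) \in (0,\infty)\times\mathbb{S}^{N-1}$ around the pole $x_0$, one has $dv_{\hn} = (\sinh r)^{N-1}\,dr\,d\sigma$ and the Laplace--Beltrami operator splits as $\Delta_{\hn} = \partial_r^2 + (N-1)\coth r\,\partial_r + (\sinh r)^{-2}\Delta_{\mathbb{S}^{N-1}}$. First I would expand $u = \sum_{n\ge 0} a_n(r) Y_n(\sigma)$ in spherical harmonics, where $-\Delta_{\mathbb{S}^{N-1}} Y_n = \lambda_n Y_n$ with $\lambda_n = n(n+N-2)$. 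Because all four integrals on the right-hand side and both integrals on the left-hand side are orthogonal sums over $n$ (the weights $r^{-2}, r^{-4}, \sinh^{-2}r, \sinh^{-4}r$ depend only on $r$), it suffices to establish the inequality mode-by-mode for the radial functions $a_n$, and moreover — since $\lambda_n$ is increasing in $n$ and the left-hand side gains a nonnegative contribution from each $\lambda_n$ while one can check the worst case is $n=0$ — to treat the radial case $u = u(r)$ and then argue the angular modes only help. (One should be slightly careful here: the coefficient $\frac{(N-1)(N-3)(N^2-2N-7)}{16}$ etc. must be shown to be the $n=0$ value and the $n\ge 1$ corrections must be verified to be nonnegative; this monotonicity-in-$n$ check is one of the places where the ``clever transformation'' hinted at in the introduction enters.)

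Next, for radial $u=u(r)$, the key device is the substitution that ``uncovers the Poincaré term.'' Following the idea used for \eqref{PR}, I would set $u(r) = (\sinh r)^{-(N-1)/2} w(r)$ (or an equivalent conformal-type change of variable), which turns $\int_{\hn}(\Delta_{\hn}u)^2\,dv_{\hn}$ and $\int_{\hn}|\nabla_{\hn}u|^2\,dv_{\hn}$ into weighted one-dimensional integrals in $w$ where the bulk Poincaré constants $(\frac{N-1}{2})^2$ and $(\frac{N-1}{2})^4$ appear explicitly as the leading terms, and the difference on the left-hand side becomes a manifestly nonnegative quadratic form in $w$ and $w'$ plus explicit lower-order weights. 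Expanding $\coth r = 1 + \frac{2}{e^{2r}-1}$ and $\frac{1}{\sinh^2 r}$ as a function of $r$, one extracts from this quadratic form the Euclidean-type Hardy/Rellich remainders $\frac{(N-1)^2}{16}r^{-2} + \frac{9}{16}r^{-4}$ (these come from the near-pole expansion, where $\hn$ looks Euclidean, matching the known Rellich constants $\frac{(N-1)^2}{16} = \frac{N^2}{4}\cdot\frac{(N-1)^2}{N^2}$... more precisely the constant forced by iterating the sharp Hardy inequality twice after the change of variables), together with the genuinely hyperbolic remainders carrying the $\sinh^{-2}r$ and $\sinh^{-4}r$ weights. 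Collecting all nonnegative square terms and all explicit coefficients, and checking the coefficients of $\sinh^{-2}r$ and $\sinh^{-4}r$ reduce to the stated products of linear factors, yields \eqref{npoincare}.

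For the sharpness statements I would proceed by exhibiting a minimizing sequence. Sharpness of $(\frac{N-1}{2})^2$ is ``by construction'': since the whole argument only added nonnegative terms to a form whose leading coefficient is exactly $(\frac{N-1}{2})^2$, any putative larger constant would contradict \eqref{high} with $k=2,l=1$ — concretely, test with functions concentrating so that $\int(\Delta_{\hn}u)^2/\int|\nabla_{\hn}u|^2 \to (\frac{N-1}{2})^2$, which forces the left-hand side, hence all remainder integrals, to be controlled. For sharpness of $\frac{(N-1)^2}{16}$ in front of $r^{-2}$, I would use a one-parameter family of radial cutoffs behaving like $r^{-(N-1)/2+\varepsilon} \cdot(\text{decay at infinity})$ near the pole — the extremizers of the Euclidean Rellich inequality transplanted via the change of variables — and show that as $\varepsilon\to 0$ the quotient $\bigl[\int(\Delta_{\hn}u)^2 - (\frac{N-1}{2})^2\int|\nabla_{\hn}u|^2\bigr]\big/\int u^2 r^{-2}$ tends to $\frac{(N-1)^2}{16}$, because both numerator and denominator are dominated by their near-pole contributions, which are governed by the sharp second-order (Rellich) Hardy constant on $\mathbb{R}^N$; the $r^{-4}$, $\sinh^{-2}r$ and $\sinh^{-4}r$ terms contribute lower-order amounts in this limit.

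The main obstacle I expect is the bookkeeping in the second paragraph: after the change of variables the left-hand side is a fourth-order quadratic form, and one must integrate by parts several times, discard the correct nonnegative squares, and simplify the residual $r$-dependent coefficients into the clean factored constants $\frac{(N-1)(N-3)(N^2-2N-7)}{16}$ and $\frac{(N-1)(N-3)(N^2-4N-3)}{16}$ — in particular verifying these remain \emph{nonnegative} for $N>4$ (note $N^2-2N-7>0$ and $N^2-4N-3>0$ precisely in that range, which is presumably why the hypothesis is $N>4$ rather than $N>2$). The secondary difficulty is confirming that passing from the full function to its radial part via spherical harmonics genuinely loses nothing, i.e. that each angular mode $n\ge1$ contributes an extra nonnegative quantity on the left that dominates the corresponding extra terms (if any) on the right.
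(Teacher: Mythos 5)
Your derivation of inequality \eqref{npoincare} follows essentially the paper's route: the substitution $u=(\sinh r)^{-(N-1)/2}w$ (the paper writes $v=(\sinh r)^{(N-1)/2}u$) to uncover the Poincar\'e term, expansion in spherical harmonics, reduction to one-dimensional weighted inequalities (1-D Hardy, 1-D Rellich, and a $\sinh$-weighted Hardy-type lemma for the $(d_n')^2/\sinh^2 r$ term), and a check that the $\lambda_n$-dependent coefficients $A_n,B_n$ are minimized at $n=0$ for $N\ge5$, which is exactly where the hypothesis $N>4$ and the factored constants $\frac{(N-1)(N-3)(N^2-2N-7)}{16}$, $\frac{(N-1)(N-3)(N^2-4N-3)}{16}$ come from. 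The points you flag as ``bookkeeping'' are indeed the content of Steps 1--3 of the paper, and your monotonicity-in-$n$ concern is resolved there by explicitly minimizing $A_n,B_n$ over $n\in\mathbb{N}_0$.

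There is, however, a genuine gap in your sharpness argument for the constant $\frac{(N-1)^2}{16}$. You propose a minimizing family concentrating near the pole, modeled on Euclidean Rellich extremizers, and claim the quotient $\bigl[\int(\Delta_{\hn}u)^2-(\tfrac{N-1}{2})^2\int|\nabla_{\hn}u|^2\bigr]/\int u^2 r^{-2}\,dv_{\hn}$ is governed by near-pole behavior. This cannot work: for functions supported in a small ball around the pole the dominant remainder is the $r^{-4}$ (Euclidean Rellich) term, so the numerator is of order $\int u^2 r^{-4}$ while the denominator $\int u^2 r^{-2}$ is of lower order, and the quotient blows up along any concentrating sequence rather than tending to $\frac{(N-1)^2}{16}$. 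The weight $r^{-2}$ is the \emph{slowest-decaying} remainder, so the relevant regime is $r\to\infty$, not $r\to0$ (this is exactly why the paper calls it the term capturing the non-Euclidean behavior). The paper avoids constructing a minimizing sequence altogether: assuming the inequality holds with some $C>\frac{(N-1)^2}{16}$, it adds the sharp first-order Poincar\'e--Hardy inequality \eqref{poincareeq} (multiplied by $(\tfrac{N-1}{2})^2$) to obtain
\begin{equation*}
\int_{\hn}(\Delta_{\hn}u)^2\,dv_{\hn}\ \ge\ \Bigl(C+\tfrac{(N-1)^2}{16}\Bigr)\int_{\hn}\frac{u^2}{r^2}\,dv_{\hn}+\frac{(N-1)^4}{16}\int_{\hn}u^2\,dv_{\hn},
\end{equation*}
which contradicts the sharpness of the constant $\frac{(N-1)^2}{8}$ in the Poincar\'e--Rellich inequality \eqref{PR}, forcing $C\le\frac{(N-1)^2}{16}$. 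You would either need to reproduce such a comparison argument (which requires invoking the known sharpness results of \cite{BGG}) or build a genuine minimizing family adapted to the $r\to\infty$ regime; the near-pole family you describe does not prove the claim. Your sharpness claim for $(\tfrac{N-1}{2})^2$ is fine, since that constant is already optimal in \eqref{high} without remainder terms.
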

 
 \begin{rem}
Inequality \eqref{npoincare} does not follow directly from \eqref{poincareeq} and \eqref{PR} but requires an independent proof which is achieved by means of a suitable modification of the proof of \eqref{PR} as given in \cite{BGG}. As already remarked in the Introduction, the main tools exploited are a suitable transformation which uncovers the Poincar\'e term and spherical harmonic analysis. Recently, spherical harmonics technique has been successfully exploited in the context of Weighted Calder\'on–-Zygmund and Rellich inequalities \cite{MSS}.
  \end{rem}
 
 \begin{rem}
As already explained in the introduction, the leading term of inequality \eqref{npoincare} is the one in front of $1/r^2$ for functions supported outside a large ball. Hence, it is particularly important to determine the sharp constant in front of such a term to highlight the non-Euclidean behavior of the inequality. Nevertheless, as happens for inequality \eqref{PR}, the problem of finding the best constant in front of the term $1/r^4$ is still open. See also \cite[Remark 6.1]{BGG}.

 \end{rem}
 
 \begin{rem} 
 It's worth noting that, as happens for inequality \eqref{PR}, the constants appearing in front of the terms $1/r^4$ and $1/\sinh^4 r$ are jointly sharp. In the sense that the inequality
\begin{equation*}
\begin{aligned}
  \int_{\mathbb{H}^{N}}   (\Delta_{\hn} u)^{2}dv_{\hn}  - \left( \frac{N-1}{2} \right)^{2} \int_{\mathbb{H}^{N}} |\nabla_{\hn} u|^2 \ dv_{\hn} \ge a \int_{\hn} \frac{u^2}{r^4} \ dv_{\hn}
+b \int_{\hn} \frac{u^2}{\sinh^4 r} \ dv_{\hn}
\end{aligned}
\end{equation*}
cannot hold for all $ u \in C^{\infty}_{c}(\mathbb{H}^{N})$, or even for for all $ u \in C^{\infty}_{c}(B_\varepsilon)$ given any $\varepsilon>0$, if
\[\begin{aligned}
&a=\frac9{16},\ \ \ b>\frac{(N-1)(N-3)(N^2-4N-3)}{16}\ \ \ \textrm{or}\\
&a>\frac9{16},\ \ \ b=\frac{(N-1)(N-3)(N^2-4N-3)}{16}.
\end{aligned}
\]
This follows by noting that $\sinh r\sim r$ as $r\to0$ and that \eqref{npoincare} yields
\begin{equation*}
\begin{aligned}
\int_{\mathbb{H}^{N}} (\Delta_{\hn} u)^{2} \ dv_{\hn} \geq
 \frac{9}{16} \int_{\mathbb{H}^{N}} \frac{u^2}{r^4} \ dv_{\hn} +\frac{(N-1)(N-3)(N^2-4N-3)}{16} \int_{\hn} \frac{u^2}{\sinh^4 r} \ dv_{\hn}
\end{aligned}
\end{equation*} 
where
\[
\frac9{16}+\frac{(N-1)(N-3)(N^2-4N-3)}{16}=\frac{N^2(N-4)^2}{16}\,
\]
and $\frac{N^2(N-4)^2}{16}$ is the best constant (namely, the larger) for the standard $N$ dimensional Euclidean Rellich inequality, both on the whole ${\mathbb R}^N$ or in any open set containing the origin. 

\end{rem}

Consider the upper half space model for $\hn$, namely $\mathbb{R}^{N}_{+} = \{ (x, y) \in \mathbb{R}^{N-1} \times \mathbb{R}^{+} \} $ endowed with the Riemannian metric $\frac{\delta_{ij}}{y^2}$. We set 
\begin{equation}\label{dd}
d:=d((x,y),(0,1) ): = \cosh^{-1} \left( 1 + \frac{(|y| - 1)^2 + |x|^2}{2 |y|} \right)\,.
\end{equation} 
It is readily seen that $d \sim \log(1/y)$ as $y \rightarrow 0$. By exploiting the transformation
$$
v(x,y) : = y^{\alpha} u(x,y), \quad  x \in \mathbb{R}^{N-1}, y \in \mathbb{R}^{+} ,
$$
with $\alpha=-\frac{N-2}{2}$ or $\alpha=-\frac{N-4}{2}$, from \eqref{npoincare} we derive the following statements 
\begin{cor}\label{cor2}
Let $N >4$ and $d$ as defined in \eqref{dd}. For all  $v \in C_c^{\infty}(\mathbb{R}^{N}_{+})$ the following inequalities hold

\begin{align}\label{HALFrellich1}
& \int_{\mathbb{R}^{+}} \int_{\mathbb{R}^{N-1}} \left( y^2 (\Delta v)^2 + \frac{(N^2 - 2N -1 )}{4} |\nabla v|^2 \right) \ dx \ dy
 \geq \frac{N(N-2)}{16} \int_{\mathbb{R}^{+}} \int_{\mathbb{R}^{N-1}} \frac{v^2}{y^2} \ dx \ dy \notag \\
& + \frac{(N-1)^2}{16} \int_{\mathbb{R}^{+}} \int_{\mathbb{R}^{N-1}}
\frac{v^2}{y^2 d^2} \ dx \ dy + \frac{9}{16} \int_{\mathbb{R}^{+}} \int_{\mathbb{R}^{N-1}} \frac{v^2}{y^2 d^4} \ dx \ dy
\end{align}
\emph{and}

\begin{align}\label{HALFrellich2}
& \int_{\mathbb{R}^{+}} \int_{\mathbb{R}^{N-1}} \left( (\Delta v)^2 + \frac{(N^2 - 2N -9 )}{4} \frac{|\nabla  v|^2}{y^2} \right) \ dx \ dy
 \geq \frac{9}{16} (N+2)(N - 4) \int_{\mathbb{R}^{+}} \int_{\mathbb{R}^{N-1}} \frac{ v^2}{y^4} \ dx \ dy \notag \\
& + \frac{(N-1)^2}{16} \int_{\mathbb{R}^{+}} \int_{\mathbb{R}^{N-1}}
\frac{ v^2}{y^4 d^2} \ dx \ dy + \frac{9}{16} \int_{\mathbb{R}^{+}} \int_{\mathbb{R}^{N-1}} \frac{ v^2}{y^4 d^4} \ dx \ dy.
\end{align}
Furthermore, we have: \par \smallskip\par
$\bullet$ no inequality of the form
$$\int_{\mathbb{R}^{+}} \int_{\mathbb{R}^{N-1}} \left( y^2 (\Delta v)^2 +
c |\nabla v|^2 \right) \ dx \ dy
 \geq \frac{N(N-2)}{16} \int_{\mathbb{R}^{+}} \int_{\mathbb{R}^{N-1}} \frac{v^2}{y^2} \ dx \ dy $$
holds for all $ v \in C^{\infty}_{c}(\mathbb{H}^{N})$ when $c< \frac{( N^2 - 2N -1)}{4}$;
 \par \smallskip\par
$\bullet$ no inequality of the form
$$\int_{\mathbb{R}^{+}} \int_{\mathbb{R}^{N-1}} \left( y^2 (\Delta v)^2 + \frac{(N^2 - 2N  -1)}{4} |\nabla v|^2 \right) \ dx \ dy
 \geq c \int_{\mathbb{R}^{+}} \int_{\mathbb{R}^{N-1}} \frac{v^2}{y^2} \ dx \ dy $$
holds for all $ v \in C^{\infty}_{c}(\mathbb{H}^{N})$ when $c>  \frac{N(N-2)}{16} $;
 \par \smallskip\par
$\bullet$ no inequality of the form
\begin{align*}
& \int_{\mathbb{R}^{+}} \int_{\mathbb{R}^{N-1}} \left( y^2 (\Delta v)^2 + \frac{(N^2 - 2N -1)}{4} |\nabla v|^2 \right) \ dx \ dy
 \geq \frac{N(N-2)}{16} \int_{\mathbb{R}^{+}} \int_{\mathbb{R}^{N-1}} \frac{v^2}{y^2} \ dx \ dy \notag \\
& + c \int_{\mathbb{R}^{+}} \int_{\mathbb{R}^{N-1}}
\frac{v^2}{y^2 d^2} \ dx \ dy
\end{align*}
holds for all $ v \in C^{\infty}_{c}(\mathbb{H}^{N})$ when $c> \frac{(N-1)^2}{16} $.
\par
Similar conclusions hold for the constants $ \frac{(N^2 - 2N - 9)}{2},$ $\frac{9}{16}(N+2)(N-4)$ and  $\frac{(N-1)^2}{16}$
in \eqref{HALFrellich2}.

 \end{cor}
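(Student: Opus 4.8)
The corollary follows by transplanting Theorem~\ref{PRHinequality} to the half-space via the substitution stated above, so the plan has three parts: rewriting the two sides of \eqref{npoincare} in Cartesian coordinates, discarding the $1/\sinh^2 r$ and $1/\sinh^4 r$ terms, and transporting the optimality assertions. Recall that in the half-space model $dv_{\hn}=y^{-N}\,dx\,dy$, $|\nabla_{\hn}u|^2=y^2|\nabla u|^2$ and $\Delta_{\hn}u=y^2\Delta u-(N-2)\,y\,\partial_y u$, where $\nabla$ and $\Delta$ are the Euclidean gradient and Laplacian. Given $v\in C_c^\infty(\mathbb{R}^N_+)$, the function $u=y^{-\alpha}v$ belongs to $C_c^\infty(\hn)$ because $y>0$ on the (compact) support of $v$, and conversely every element of $C_c^\infty(\hn)$ arises this way, so $v\mapsto u$ is a bijection of $C_c^\infty(\mathbb{R}^N_+)$ onto $C_c^\infty(\hn)$. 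First I would substitute $u=y^{-\alpha}v$ into the left-hand side of \eqref{npoincare}: expanding gives
\[
\int_{\hn}|\nabla_{\hn}u|^2\,dv_{\hn}=\int_{\mathbb{R}^N_+}y^{2-N-2\alpha}\Bigl(|\nabla v|^2-2\alpha\,y^{-1}v\,\partial_y v+\alpha^2\,y^{-2}v^2\Bigr)\,dx\,dy,
\]
and, squaring $\Delta_{\hn}u=y^{-\alpha}\bigl(y^2\Delta v-(2\alpha+N-2)\,y\,\partial_y v+\alpha(\alpha+N-1)\,v\bigr)$ and multiplying by $y^{-N}$, the analogous identity for $\int_{\hn}(\Delta_{\hn}u)^2\,dv_{\hn}$. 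For $\alpha=-\tfrac{N-2}{2}$ one has $2-N-2\alpha=0$ and $2\alpha+N-2=0$, so the first-order term in $\Delta_{\hn}u$ drops; for $\alpha=-\tfrac{N-4}{2}$ one has $2\alpha+N-2=2$. In both cases I would integrate by parts in $x$ and in $y$ (all boundary contributions vanish since $v$ is compactly supported in $\{y>0\}$), the key being that the auxiliary integrals $\int y^{-1}\Delta v\,\partial_y v$, $\int y^{-2}v\,\Delta v$ and $\int y^{-3}v\,\partial_y v$ recombine so that the $\int y^{-2}(\partial_y v)^2$ contributions cancel and the coefficients come out in closed form: for $\alpha=-\tfrac{N-2}2$ one gets $\int_{\hn}|\nabla_{\hn}u|^2\,dv_{\hn}=\int|\nabla v|^2+\tfrac{N(N-2)}{4}\int y^{-2}v^2$ and $\int_{\hn}(\Delta_{\hn}u)^2\,dv_{\hn}=\int y^2(\Delta v)^2+\tfrac{N(N-2)}{2}\int|\nabla v|^2+\tfrac{N^2(N-2)^2}{16}\int y^{-2}v^2$ (all integrals over $\mathbb{R}^N_+$), with the analogous expressions carrying the weights $y^{-2}$ and $y^{-4}$ for $\alpha=-\tfrac{N-4}2$. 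Forming $\int_{\hn}(\Delta_{\hn}u)^2-\bigl(\tfrac{N-1}{2}\bigr)^2\int_{\hn}|\nabla_{\hn}u|^2$ then reproduces the left-hand sides of \eqref{HALFrellich1} and \eqref{HALFrellich2}, up to a lower-order term proportional to $\int y^{-2}v^2$, respectively $\int y^{-4}v^2$, which I would move to the right-hand side; here the gradient coefficients match because $\tfrac{N(N-2)}{2}-\bigl(\tfrac{N-1}{2}\bigr)^2=\tfrac{N^2-2N-1}{4}$.

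Second I would transform the right-hand side of \eqref{npoincare}. Since $r=d$ and $u^2=y^{-2\alpha}v^2$ one has $\int_{\hn}\tfrac{u^2}{r^{2j}}\,dv_{\hn}=\int_{\mathbb{R}^N_+}\tfrac{v^2}{y^{2j'}d^{2j}}\,dx\,dy$, with $j'=1$ when $\alpha=-\tfrac{N-2}2$ and $j'=2$ when $\alpha=-\tfrac{N-4}2$. For the two hyperbolic-function terms it suffices to note, from \eqref{dd} and $\sinh^2 r=(\cosh r-1)(\cosh r+1)$, that
\[
\sinh^2 r=\frac{\bigl((y-1)^2+|x|^2\bigr)\bigl((y+1)^2+|x|^2\bigr)}{4\,y^2},
\]
so that $\int_{\hn}\tfrac{u^2}{\sinh^2 r}\,dv_{\hn}$ and $\int_{\hn}\tfrac{u^2}{\sinh^4 r}\,dv_{\hn}$ turn into integrals of $v^2$ against strictly positive weights; since $N>4$ makes the constants $\tfrac{(N-1)(N-3)(N^2-2N-7)}{16}$ and $\tfrac{(N-1)(N-3)(N^2-4N-3)}{16}$ positive, these two terms are nonnegative and may simply be dropped. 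Collecting and rearranging the surviving terms yields \eqref{HALFrellich1} for $\alpha=-\tfrac{N-2}2$ and \eqref{HALFrellich2} for $\alpha=-\tfrac{N-4}2$.

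Finally, for the three non-improvability statements I would run the substitution backwards. The identities above show that the left-hand side of \eqref{HALFrellich1} with an arbitrary $c$ in place of $\tfrac{N^2-2N-1}{4}$ equals $\int_{\hn}(\Delta_{\hn}u)^2-\bigl(\tfrac{N(N-2)}{2}-c\bigr)\int_{\hn}|\nabla_{\hn}u|^2$ plus a constant multiple of $\int_{\hn}u^2\,dv_{\hn}$, and that $\int_{\mathbb{R}^N_+}\tfrac{v^2}{y^2d^2}\,dx\,dy=\int_{\hn}\tfrac{u^2}{r^2}\,dv_{\hn}$ and $\int_{\mathbb{R}^N_+}\tfrac{v^2}{y^2}\,dx\,dy=\int_{\hn}u^2\,dv_{\hn}$. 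Since $v\mapsto u$ is onto $C_c^\infty(\hn)$, improving any of the three displayed Euclidean inequalities is equivalent to improving the optimality of the quadratic form $u\mapsto\int_{\hn}(\Delta_{\hn}u)^2-\lambda\int_{\hn}|\nabla_{\hn}u|^2$ (for $\lambda$ near $\bigl(\tfrac{N-1}{2}\bigr)^2$, with the right-hand side measured against $\int_{\hn}u^2\,dv_{\hn}$) or improving the $1/r^2$ Hardy remainder in \eqref{npoincare}; the former is ruled out by the fact that $\mathrm{spec}(-\Delta_{\hn})=\bigl[\bigl(\tfrac{N-1}{2}\bigr)^2,\infty\bigr)$ (whence in particular the sharpness of $\bigl(\tfrac{N-1}{2}\bigr)^2$ in Theorem~\ref{PRHinequality}), the latter by the sharpness of $\tfrac{(N-1)^2}{16}$ there; the assertions relative to \eqref{HALFrellich2} are obtained verbatim with $\alpha=-\tfrac{N-4}2$. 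The only genuinely laborious step in all of this is the bookkeeping behind the identities of the first paragraph---the several integrations by parts entering $\int_{\hn}(\Delta_{\hn}u)^2\,dv_{\hn}$, the cancellations that put the coefficients in closed form, and the matching of each resulting Euclidean constant with the extremal hyperbolic one---while everything else is algebraic simplification.
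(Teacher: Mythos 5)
Your derivation of \eqref{HALFrellich1}--\eqref{HALFrellich2} is correct and is exactly the paper's route: substitute $u=y^{\beta}v$ with $\beta=\tfrac{N-2}{2}$, resp.\ $\beta=\tfrac{N-4}{2}$, in the half-space model, integrate by parts to get the closed-form identities for $\int_{\hn}|\nabla_{\hn}u|^2\,dv_{\hn}$ and $\int_{\hn}(\Delta_{\hn}u)^2\,dv_{\hn}$, insert them into \eqref{npoincare}, and drop the positive $\sinh^{-2}r$, $\sinh^{-4}r$ terms; your coefficient bookkeeping checks out, and your treatment of the second and third bullets (sharpness of the Poincar\'e constant and of $\tfrac{(N-1)^2}{16}$ in Theorem~\ref{PRHinequality}) is also what the paper intends.

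The gap is in the first bullet (and in the analogous claim for \eqref{HALFrellich2}). Undoing the transformation, the hypothesis that some $c<\tfrac{N^2-2N-1}{4}$ works is equivalent, with $\lambda:=\tfrac{N(N-2)}{2}-c>\tfrac{(N-1)^2}{4}$, to
\begin{equation*}
\int_{\hn}(\Delta_{\hn}u)^2\,dv_{\hn}-\lambda\int_{\hn}|\nabla_{\hn}u|^2\,dv_{\hn}\;\ge\;-\frac{N(N-2)}{4}\Bigl(\lambda-\frac{(N-1)^2}{4}\Bigr)\int_{\hn}u^2\,dv_{\hn}\qquad\text{for all }u\in C_c^\infty(\hn),
\end{equation*}
i.e.\ the transplanted inequality carries a \emph{negative} multiple of $\int_{\hn}u^2$ on the right, whose exact value you left unspecified (``plus a constant multiple of $\int u^2$''). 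Knowing only that $\mathrm{spec}(-\Delta_{\hn})=\bigl[\tfrac{(N-1)^2}{4},\infty\bigr)$ does \emph{not} rule out such an inequality: for instance $\int_{\hn}(\Delta_{\hn}u)^2-\lambda\int_{\hn}|\nabla_{\hn}u|^2+\tfrac{\lambda^2}{4}\int_{\hn}u^2=\|(-\Delta_{\hn}-\tfrac{\lambda}{2})u\|_{L^2}^2\ge0$ holds for \emph{every} $\lambda$, so a form with $\lambda>(\tfrac{N-1}{2})^2$ can perfectly well be bounded below by a sufficiently negative multiple of $\int u^2$. What is needed is a quantitative check that the specific constant produced by the change of variables is too small: e.g.\ evaluate $t^2-\lambda t+\tfrac{N(N-2)}{4}\bigl(\lambda-\tfrac{(N-1)^2}{4}\bigr)$ at the bottom of the spectrum $t=\tfrac{(N-1)^2}{4}$, which gives $-\tfrac14\bigl(\lambda-\tfrac{(N-1)^2}{4}\bigr)<0$. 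The paper avoids spectral calculus altogether: it keeps the surplus term $\bigl(\tfrac{N^2-2N-1}{4}-c\bigr)\int|\nabla v|^2\,dx\,dy$, bounds it below by the sharp Hardy--Maz'ya inequality $\int|\nabla v|^2\ge\tfrac14\int v^2/y^2=\tfrac14\int_{\hn}u^2\,dv_{\hn}$, and then contradicts the sharpness of $\bigl(\tfrac{N-1}{2}\bigr)^4$ in \eqref{high}/\eqref{PR}. Some step of this kind (your spectral computation made explicit, or the paper's Hardy--Maz'ya argument) is indispensable for the first bullet, and likewise for the corresponding statement about $\tfrac{N^2-2N-9}{4}$ in \eqref{HALFrellich2}, where a weighted analogue of the Maz'ya-type lower bound is required; as written, your appeal to the location of the spectrum alone does not close the argument.
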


 \par\bigskip\par
 
 \section{Case $k$ arbitrary and $l=0$}\label{l0}
In this section we restate and prove Theorem \ref{main_intro} for $0=l<k$.
\begin{thm}\label{mainhigher0}
Let $k$ be a positive integer and $ N> 2k$. There exist $k$ \emph{positive} constants $c_{k}^i=c_{k}^i(N)$ such that the following inequality holds 
\begin{equation}
\label{global}
\int_{\mathbb{H}^{N}} |\nabla_{\mathbb{H}^{N}}^{k} u|^{2}  \ dv_{\mathbb{H}^{N}} - \left( \frac{N-1}{2} \right)^{2k} 
\int_{\mathbb{H}^{N}} u^2 \ dv_{\mathbb{H}^{N}}\ge \sum_{i= 1}^{k} c_{k}^i\int_{\mathbb{H}^{N}} \frac{u^2}{r^{2i}} \ dv_{\mathbb{H}^{N}}\,,
\end{equation}
for all $u \in C^{\infty}_{0}(\mathbb{H}^{N})$. Furthermore, the leading terms a $r\rightarrow 0$ and $r \rightarrow\infty$ are explicitly given by
$$ c_{k}^1=d_k:=
\left \{\begin{array}{ll} 
\sum_{j = 1}^{ m} \frac{(N-1)^{4m-2j}}{2^{4m-1}}& \text{if $k=2m$}\,,\\
\sum_{j = 1}^{ m} \frac{(N-1)^{4m-2j+2}}{2^{4m+1}}+ \frac{(N-1)^{2m}}{2^{4m+2}} & \text{if $k=2m+1$\,,}
 \end{array}\right.$$
 $$c_{k}^k=e_k:=
\left \{\begin{array}{ll} 
\frac{9}{2^{4m}} \prod_{j = 1}^{ m -1} (N+ 4j)^2(N-4j-4)^2 & \text{if $k=2m$}\,,\\
\frac{1}{2^{4m+2}} \prod_{j = 1}^{ m} (N+ 4j - 2)^2(N- 4j - 2)^2 & \text{if $k=2m+1$\,,}
 \end{array}\right.$$
where we use the conventions: $\sum_{j = 1}^{ 0}=0$ and $\prod_{j = 1}^{ 0}=1$.

\end{thm}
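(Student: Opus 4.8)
The plan is to reduce the higher order case $l=0$ to an iterated application of the two base cases already established in the excerpt, namely inequality \eqref{poincareeq} (the case $k=1$, $l=0$), inequality \eqref{PR} (the case $k=2$, $l=0$) and Theorem \ref{PRHinequality}, together with the plain higher order Poincar\'e inequality \eqref{high}. The key observation is that $|\nabla_{\hn}^{k}u|^2$ is built by alternately applying $\Delta_{\hn}$ and $\nabla_{\hn}\Delta_{\hn}^{(\cdot)}$, so one can peel off two orders at a time. Concretely, if $k=2m$ I would write, setting $w:=\Delta_{\hn}^{m-1}u$,
\[
\int_{\hn}(\Delta_{\hn}^{m}u)^2\,dv_{\hn}=\int_{\hn}(\Delta_{\hn}w)^2\,dv_{\hn}\ge\left(\tfrac{N-1}{2}\right)^4\int_{\hn}w^2\,dv_{\hn}+\frac{(N-1)^2}{8}\int_{\hn}\frac{w^2}{r^2}\,dv_{\hn}+\frac{9}{16}\int_{\hn}\frac{w^2}{r^4}\,dv_{\hn}
\]
by \eqref{PR}. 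The first term on the right is $\left(\tfrac{N-1}{2}\right)^4\int_{\hn}(\Delta_{\hn}^{m-1}u)^2\,dv_{\hn}$, which I would feed back into the same inequality, and iterate. The Hardy remainder terms generated at each step, namely $\int_{\hn}w^2/r^{2j}\,dv_{\hn}$ with $w=\Delta_{\hn}^{i}u$, must then be bounded below by multiples of $\int_{\hn}u^2/r^{2j}\,dv_{\hn}$; this is where the construction needs its auxiliary lower bounds.

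**Carrying out the iteration.** First I would record the telescoping: applying \eqref{PR} a total of $m-1$ times to $\Delta_{\hn}^{m-1}u,\ \Delta_{\hn}^{m-2}u,\dots$ produces
\[
\int_{\hn}(\Delta_{\hn}^{m}u)^2\,dv_{\hn}-\left(\tfrac{N-1}{2}\right)^{4m}\int_{\hn}u^2\,dv_{\hn}\ge\sum_{i=0}^{m-1}\left(\tfrac{N-1}{2}\right)^{4i}\left(\frac{(N-1)^2}{8}\int_{\hn}\frac{(\Delta_{\hn}^{m-1-i}u)^2}{r^2}\,dv_{\hn}+\frac{9}{16}\int_{\hn}\frac{(\Delta_{\hn}^{m-1-i}u)^2}{r^4}\,dv_{\hn}\right),
\]
and for $k=2m+1$ I would first apply \eqref{poincareeq} to $\Delta_{\hn}^{m}u$ (to convert $|\nabla_{\hn}\Delta_{\hn}^m u|^2$ into $(\Delta_{\hn}^m u)^2$ plus a $1/r^2$ remainder) and then run the same even-order iteration. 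The second step is to bound each $\int_{\hn}(\Delta_{\hn}^{j}u)^2/r^{2s}\,dv_{\hn}$ from below in terms of $\int_{\hn}u^2/r^{2s}\,dv_{\hn}$. For the leading term $c_k^1$ (the $r\to\infty$ behavior, powers $1/r^2$) I expect the clean inequality $\int_{\hn}(\Delta_{\hn}^{j}u)^2/r^2\,dv_{\hn}\ge\left(\tfrac{N-1}{2}\right)^{4j}\int_{\hn}u^2/r^2\,dv_{\hn}$ to hold — this should follow by combining \eqref{high} with a commutator/positivity argument, or more simply by noting that $1/r^2\le C$ only fails near the pole so the weight is essentially a bounded perturbation at infinity — and summing the resulting geometric-type series over $i$ reproduces exactly the stated $d_k=\sum_{j=1}^m (N-1)^{4m-2j}/2^{4m-1}$. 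For the subleading term $c_k^k$ (powers $1/r^{2k}$, the $r\to0$ behavior) only the top iterate $i=0$, $s=2$ contributes the worst singularity after further applications of a weighted Rellich-type inequality $\int_{\hn}(\Delta_{\hn}u)^2/r^{2s}\,dv_{\hn}\gtrsim\int_{\hn}u^2/r^{2s+4}\,dv_{\hn}$ with the sharp Euclidean-type constant $((N+2s)(N-2s-4)/4)^2$ or similar; tracking these constants through the $m-1$ applications yields the product $\tfrac{9}{2^{4m}}\prod_{j=1}^{m-1}(N+4j)^2(N-4j-4)^2$.

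**Main obstacle.** The hard part will be the second step: establishing the weighted inequalities of the form $\int_{\hn}(\Delta_{\hn}^{j}u)^2/r^{2s}\,dv_{\hn}\ge (\text{explicit constant})\int_{\hn}u^2/r^{2(s+2j)}\,dv_{\hn}$ with constants that are simultaneously correct and lead to the claimed closed forms, since naive iteration of a single weighted Rellich inequality does not obviously give the right exponents on $r$, and one must be careful that all intermediate constants stay positive for $N>2k$. The cleanest route is probably to prove a one-step weighted Rellich inequality on $\hn$ — $\int_{\hn}(\Delta_{\hn}v)^2\,dv_{\hn}\ge A\int_{\hn}v^2/r^4\,dv_{\hn}+(\text{lower order})$ with $A$ depending on the weight already present — using the spherical-harmonics decomposition exactly as in the proof of \eqref{PR}, reducing to a one-dimensional radial operator and an ODE comparison; then the bookkeeping of constants through the iteration, though lengthy, is mechanical. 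One also needs to verify positivity of every $c_k^i$, which for the two leading constants is immediate from $N>2k$ (each factor $N-4j-4>0$ for $j\le m-1$ precisely when $N>4m=2k$, and $N-4j-2>0$ for $j\le m$ when $N>4m+2=2k$), and for the intermediate ones follows because they are sums of products of manifestly positive quantities produced along the iteration.
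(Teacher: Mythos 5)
Your overall skeleton --- telescoping \eqref{PR} (after first applying \eqref{poincareeq} to $\Delta_{\hn}^m u$ when $k$ is odd) and then pushing the weighted quantities $\int_{\hn}(\Delta_{\hn}^{j}u)^2 r^{-2s}\,dv_{\hn}$ down to weighted $L^2$ norms of $u$ --- is the same as the paper's inductive proof. The genuine gap is the tool you invoke for the leading coefficient: the ``clean inequality'' $\int_{\hn}(\Delta_{\hn}^{j}u)^2 r^{-2}\,dv_{\hn}\ge \left(\frac{N-1}{2}\right)^{4j}\int_{\hn}u^2 r^{-2}\,dv_{\hn}$ is false already for $j=1$, and neither of your suggested justifications (a commutator argument, or ``the weight is essentially a bounded perturbation at infinity'') can make it true. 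Indeed, set $\lambda=\frac{N-1}{2}$ and take radial test functions $u=e^{-\lambda r}g(r/R)$ with $g\in C_c^{\infty}(a,b)$ and $R\to\infty$. Up to exponentially small errors, $\Delta_{\hn}u=e^{-\lambda r}\bigl(R^{-2}g''(r/R)-\lambda^{2}g(r/R)\bigr)$ and $(\sinh r)^{N-1}e^{-2\lambda r}$ is essentially constant on the support, so after expanding the square and integrating by parts the weighted Rayleigh quotient equals $\lambda^{4}+2\lambda^{2}R^{-2}\,\frac{\int g'^{2}s^{-2}\,ds-3\int g^{2}s^{-4}\,ds}{\int g^{2}s^{-2}\,ds}+O(R^{-4})$. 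Since the sharp constant in the one-dimensional Hardy inequality $\int g'^{2}s^{-2}\,ds\ge C\int g^{2}s^{-4}\,ds$ is $C=\frac94<3$, a $g$ spread over a long logarithmic interval makes the bracketed difference negative, and then for large $R$ the quotient lies strictly below $\left(\frac{N-1}{2}\right)^{4}$. Once a weight $r^{-\beta}$ is present, the Poincar\'e-type constant available per application of $\Delta_{\hn}$ is not $\left(\frac{N-1}{2}\right)^{4}$ but only $\frac{(N-1)^2}{16}$; this is exactly what the weighted inequality \eqref{yangsukong} of Yang--Su--Kong provides (iterated in \eqref{yangxtended}), and it is the ingredient the paper uses in place of your clean inequality.

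This also breaks your bookkeeping: with your inequality the telescoped sum would produce $m\,(N-1)^{4m-2}/2^{4m-1}$ in front of $\int_{\hn} u^{2}r^{-2}\,dv_{\hn}$, whereas the stated $d_{2m}=\sum_{j=1}^{m}(N-1)^{4m-2j}/2^{4m-1}$ has decreasing powers precisely because each weighted step only contributes $\frac{(N-1)^{2}}{16}$ rather than $\frac{(N-1)^{4}}{16}$; so your route does not ``reproduce exactly'' $d_k$, it overstates it on the basis of a false lemma. Your plan for the most singular term is closer to the mark: the one-step weighted Rellich bound on $\hn$ with Euclidean-type constant $\frac{(N+\beta)^{2}(N-\beta-4)^{2}}{16}$ is the other half of \eqref{yangsukong}, which the paper cites from \cite{Yang} rather than re-deriving by spherical harmonics, but in your write-up it remains a sketch, and it is the same lemma whose Poincar\'e part you are missing. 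To repair the argument, replace the clean inequality by \eqref{yangsukong}/\eqref{yangxtended} throughout the iteration; note also that these inequalities require $\beta<N-4$ at every intermediate weight, which is where the hypothesis $N>2k$ really enters, not merely in the positivity of the final constants.
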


\begin{proof}
Here and after, for shortness we will write $\Delta_{\mathbb{H}^{N}}=\Delta$. In the proof we will repeatedly exploit the following inequality from \cite[Theorem 4.4]{Yang}:
\begin{align}\label{yangsukong}
\int_{\mathbb{H}^{N}} \frac{(\Delta u)^2}{r^{\beta}} \ dv_{\mathbb{H}^{N}}\geq &\frac{(N+\beta)^2(N-\beta-4)^2}{16}  \int_{\mathbb{H}^{N}} \frac{u^2}{r^{4+\beta}} \ dv_{\mathbb{H}^{N}} \notag  \\
& +\frac{(N-2-\beta)(N-2+\beta)(N-1)}{8}  \int_{\mathbb{H}^{N}} \frac{u^2}{r^{2+\beta}} \ dv_{\mathbb{H}^{N}}+\frac{(N-1)^2}{16}  \int_{\mathbb{H}^{N}} \frac{u^2}{r^{\beta}} \ dv_{\mathbb{H}^{N}} , 
\end{align}
for all $u \in C^{\infty}_{0}(\mathbb{H}^{N})$ and $0\leq \beta<N-4$.

We prove separately the case $k$ even and $k$ odd. First we assume $k=2m$ even. If $m = 1$, \eqref{global} follows directly from \eqref{PR}. When $m = 2$, by \eqref{PR} and \eqref{yangsukong} with $N>8$, we have
\begin{align*}
\int_{\mathbb{H}^{N}} (\Delta (\Delta u))^2 \ dv_{\mathbb{H}^{N}} & \geq \frac{(N-1)^4}{16} \int_{\mathbb{H}^{N}} (\Delta u)^2 \ dv_{\mathbb{H}^{N}} 
+ \frac{(N-1)^2}{8} \int_{\mathbb{H}^{N}} \frac{(\Delta u)^2}{r^2} \ dv_{\mathbb{H}^{N}} + \frac{9}{16} \int_{\mathbb{H}^{N}} \frac{(\Delta u)^2}{r^4} \ dv_{\mathbb{H}^{N}}  \notag\\
& \geq \frac{(N-1)^4}{16} \left[ \frac{(N-1)^4}{16} \int_{\mathbb{H}^{N}} u^2 \ dv_{\mathbb{H}^{N}} + \frac{(N-1)^2}{8} \int_{\mathbb{H}^{N}}  \frac{u^2}{r^2} \ dv_{\mathbb{H}^{N}}
+ \frac{9}{16} \int_{\mathbb{H}^{N}} \frac{u^2}{r^4} \ dv_{\mathbb{H}^{N}} \right] \notag \\
& + \frac{(N-1)^2}{8} \left[ \frac{(N + 2)^2 (N - 6)^2}{16} \int_{\mathbb{H}^{N}} \frac{ u^2}{r^6} \ dv_{\mathbb{H}^{N}} 
+  \frac{N(N-1)(N-4)}{8} \int_{\mathbb{H}^{N}}  \frac{u^2}{r^4} \ dv_{\mathbb{H}^{N}} \right.  \notag \\
& \left. + \frac{(N-1)^2}{16} \int_{\mathbb{H}^{N}}  \frac{ u^2}{r^2} \ dv_{\mathbb{H}^{N}}   \right] + \frac{9}{16} \left[ \frac{(N + 4)^2(N - 8)^2}{16} \int_{\mathbb{H}^{N}} 
\frac{u^2}{r^8} \ dv_{\mathbb{H}^{N}} \right. \notag \\
& \left.  + \frac{(N + 2)(N - 1) (N - 6)}{8} \int_{\mathbb{H}^{N}}  \frac{u^2}{r^6} \ dv_{\mathbb{H}^{N}}  + \frac{(N - 1)^2}{16} \int_{\mathbb{H}^{N}} 
\frac{u^2}{r^4} \ dv_{\mathbb{H}^{N}} \right] \notag \\
& = \left( \frac{N-1}{2} \right)^8 \int_{\mathbb{H}^{N}} u^2 \ dv_{\mathbb{H}^{N}} + \sum_{i = 1}^{4} c_{4}^i \int_{\mathbb{H}^{N}} \frac{u^2}{r^{2i}},
\end{align*}
where $c_{4}^1 = \frac{(N - 1)^6}{2^7} + \frac{(N - 1)^4}{2^7},$ $c_{4}^4 = \frac{9}{2^8} {(N + 4)^2 (N - 8)^2}$. Hence, \eqref{global} is proved for $k= 4.$ \\
\par
Next we assume \eqref{global} holds for $k=2m$ with $m > 2,$ namely
\begin{align}\label{higher1}
\int_{\mathbb{H}^{N}} (\Delta^{m} u)^2 \ dv_{\mathbb{H}^{N}} - \left( \frac{N-1}{2} \right)^{4m} \int_{\mathbb{H}^{N}} u^{2} \ dv_{\mathbb{H}^{N}} & \geq \sum_{j = 1}^{ m} \frac{(N-1)^{4m-2j}}{2^{4m-1}} \int_{\mathbb{H}^{N}} \frac{u^2}{r^{2}} \ dv_{\mathbb{H}^{N}}+\sum_{i = 2}^{2m - 1} c_{2m}^i \int_{\mathbb{H}^{N}} \frac{u^2}{r^{2i}} \ dv_{\mathbb{H}^{N}}  \notag  \\
&+\frac{9}{2^{4m}} \prod_{j = 1}^{ m -1} (N+ 4j)^2(N-4(j+1))^2 \int_{\mathbb{H}^{N}} \frac{u^2}{r^{4m}} \ dv_{\mathbb{H}^{N}}, 
\end{align}
where, for $2\leq i\leq 2m-1$, the $c_{2m}^i$ are suitable positive constants and $N>4m$. Inequality \eqref{higher1} yields
\begin{align*}
\int_{\mathbb{H}^{N}} (\Delta^{m+1} u)^2 = \int_{\mathbb{H}^{N}} (\Delta^{m} (\Delta u))^2\geq 
\left( \frac{N-1}{2} \right)^{4m} \int_{\mathbb{H}^{N}}  (\Delta u)^2 \ dv_{\mathbb{H}^{N}} +\sum_{j = 1}^{ m} \frac{(N-1)^{4m-2j}}{2^{4m-1}} \int_{\mathbb{H}^{N}} \frac{(\Delta u)^2}{r^{2}} \ dv_{\mathbb{H}^{N}}\notag \\
+ \sum_{ i = 2}^{2m - 1} c_{2m}^i \int_{\mathbb{H}^{N}} \frac{(\Delta u)^2}{r^{2i}} \ dv_{\mathbb{H}^{N}}+ \frac{9}{2^{4m}} \prod_{i = 1}^{m -1} (N + 4j)^2(N-4(j + 1))^2 
\int_{\mathbb{H}^{N}} \frac{(\Delta u)^2}{r^{4m}} \ dv_{\mathbb{H}^{N}} \,.
\end{align*}
Next, by \eqref{yangsukong}, for  $N > 4m + 4$ we have
\begin{align*}\int_{\mathbb{H}^{N}}  \frac{(\Delta u)^2}{r^{2}} \ dv_{\mathbb{H}^{N}} &\geq\frac{(N+2)^2(N-6)^2}{16}  \int_{\mathbb{H}^{N}} \frac{u^2}{r^{6}} \ dv_{\mathbb{H}^{N}}  \notag  \\ &+\frac{(N-4)(N)(N-1)}{8}  \int_{\mathbb{H}^{N}} \frac{u^2}{r^{4}} \ dv_{\mathbb{H}^{N}}+\frac{(N-1)^2}{16}  \int_{\mathbb{H}^{N}} \frac{u^2}{r^{2}} \ dv_{\mathbb{H}^{N}}\,,
\end{align*}
\begin{align*} \sum_{ i = 2}^{2m - 1} c_{2m}^i \int_{\mathbb{H}^{N}} \frac{(\Delta u)^2}{r^{2i}} \ dv_{\mathbb{H}^{N}}\geq  \sum_{ i = 2}^{2m+1} \bar c_{2m}^i \int_{\mathbb{H}^{N}} \frac{u^2}{r^{2i}} \ dv_{\mathbb{H}^{N}}\,,
\end{align*}
\begin{align*}
\int_{\mathbb{H}^{N}} \frac{(\Delta u)^2}{r^{4m}} \ dv_{\mathbb{H}^{N}}&\geq \frac{(N+4m)^2(N-4m-4)^2}{16}  \int_{\mathbb{H}^{N}} \frac{u^2}{r^{4+4m}} \ dv_{\mathbb{H}^{N}}\notag  \\ &+\frac{(N-2-4m)(N-2+4m)(N-1)}{8}  \int_{\mathbb{H}^{N}} \frac{u^2}{r^{2+4m}} \ dv_{\mathbb{H}^{N}}+\frac{(N-1)^2}{16}  \int_{\mathbb{H}^{N}} \frac{u^2}{r^{4m}} \ dv_{\mathbb{H}^{N}}\,,
\end{align*}
where, for $2\leq i\leq 2m+1$, $\bar c_{2m}^i$ are suitable positive constants.
The above inequalities and \eqref{PR}, finally yield
\begin{align*}
\int_{\mathbb{H}^{N}} (\Delta^{m+1} u)^2  - \left( \frac{N-1}{2} \right)^{4(m + 1)} \int_{\mathbb{H}^{N}} u^2 \ dv_{\mathbb{H}^{N}}   \geq 
\left(\frac{(N-1)^{4m+2}}{2^{4m+3}}+\sum_{j = 1}^{ m} \frac{(N-1)^{4m-2j+2}}{2^{4m+3}}   \right)\int_{\mathbb{H}^{N}} \frac{u^2}{r^{2}} \ dv_{\mathbb{H}^{N}}
\notag\\ + \sum_{ i = 2}^{2m+1} \hat c_{2m}^i \int_{\mathbb{H}^{N}} \frac{u^2}{r^{2i}} \ dv_{\mathbb{H}^{N}}+
\notag\\
 + \frac{9}{2^{4m}} \prod_{j = 1}^{m-1} (N + 4j)^2(N-4(j + 1))^2  
\frac{(N+4m)^2(N-4m-4)^2}{16}  \int_{\mathbb{H}^{N}} \frac{u^2}{r^{4+4m}} \ dv_{\mathbb{H}^{N}}\\
=\sum_{j = 1}^{ m+1} \frac{(N-1)^{4(m+1)-2j}}{2^{4(m+1)-1}} \int_{\mathbb{H}^{N}} \frac{u^2}{r^{2}} \ dv_{\mathbb{H}^{N}}+ \sum_{ l = 2}^{2(m+1)-1} \hat c_{2m}^i \int_{\mathbb{H}^{N}} \frac{u^2}{r^{2i}} \ dv_{\mathbb{H}^{N}}
\notag\\
 + \frac{9}{2^{4m+4}} \prod_{j = 1}^{m } (N + 4j)^2(N-4(j + 1))^2    \int_{\mathbb{H}^{N}} \frac{u^2}{r^{4+4m}} \ dv_{\mathbb{H}^{N}}
\end{align*}
where, for $2\leq i\leq 2m+1$, $\hat c_{2m}^i$ are suitable positive constants. By induction, this completes the proof of \eqref{global} for $k$ even.\\

Next we turn to the case $k=2m+1$ odd.  For $m = 1$ and $N > 6$, by  \eqref{poincareeq},  \eqref{PR} and \eqref{yangsukong}, we deduce
\begin{align*}
\int_{\mathbb{H}^{N}} |\nabla (\Delta u)|^2 \ dv_{\mathbb{H}^{N}}  & \geq \left( \frac{N-1}{2} \right)^2 \int_{\mathbb{H}^{N}} (\Delta u)^2 \ dv_{\mathbb{H}^{N}}
 + \frac{1}{4} \int_{\mathbb{H}^{N}} \frac{(\Delta u)^2}{r^2} \ dv_{\mathbb{H}^{N}} \notag \\
& \geq  \left( \frac{N-1}{2} \right)^2 \left[ \left( \frac{N-1}{2} \right)^2\int_{\mathbb{H}^{N}} u^2 \ dv_{\mathbb{H}^{N}} + \frac{(N-1)^2}{8} \int_{\mathbb{H}^{N}} \frac{u^2}{r^2} \ dv_{\mathbb{H}^{N}} 
+ \frac{9}{16} \int_{\mathbb{H}^{N}} \frac{u^2}{r^4} \ dv_{\mathbb{H}^{N}} \right] \notag \\
& +  \frac{1}{4}  \left[ \frac{(N + 2)^2 (N-6)^2}{16} \int_{\mathbb{H}^{N}} \frac{u^2}{r^6} \ dv_{\mathbb{H}^{N}} + \frac{N(N-1)(N-4)}{8} \int_{\mathbb{H}^{N}} \frac{u^2}{r^4} \ dv_{\mathbb{H}^{N}}    \right. \notag \\
& \left. + \frac{(N-1)^2}{16} \int_{\mathbb{H}^{N}} \frac{u^2}{r^2} \ dv_{\mathbb{H}^{N}} \right]. 
\end{align*}
 Hence we obtain,
\[
\int_{\mathbb{H}^{N}} |\nabla (\Delta u)|^2 \ dv_{\mathbb{H}^{N}} - \left( \frac{N-1}{2} \right)^6 \int_{\mathbb{H}^{N}} u^2 \ dv_{\mathbb{H}^{N}} \geq   \sum_{l = 1}^{3} c_{3}^i \int_{\mathbb{H}^{N}} \frac{u^2}{r^{2i}},
\]
where $c_{3}^1 =  \left( \frac{(N-1)^4}{2^5} + \frac{(N-1)^2}{2^6} \right)$ and $c_{3}^3=\frac{1}{4^3} (N+2)^2(N-6)^2$ hence, \eqref{global} with $m=1$ is verified. For general $ m + 1$ the proof follows very similar to the case $k$ even, we skip the details for brevity. This completes the proof.  
\end{proof}

\section{Case $k>l>0$ arbitrary} \label{karbitrary}
In this section we restate and prove Theorem \ref{main_intro} for $l>0$, the case $l=0$ has already been dealt with in Section \ref{l0}. 

 \begin{thm}\label{mainmain}
Let $k>l$ be positive integers and $ N> 2k$. There exist $k$ \emph{positive} constants $\alpha_{k,l}^i=\alpha_{k,l}^i(N)$ such that the following inequality holds 
$$
\int_{\mathbb{H}^{N}} |\nabla_{\mathbb{H}^{N}}^{k} u|^{2}  \ dv_{\mathbb{H}^{N}} -\left( \frac{N-1}{2} \right)^{2(k - l)} 
\int_{\mathbb{H}^{N}} |\nabla_{\mathbb{H}^{N}}^{l} u|^2 \ dv_{\mathbb{H}^{N}}\ge \sum_{i = 1}^{k} \alpha_{k,l}^i \int_{\mathbb{H}^{N}} \frac{u^2}{r^{2i}} \ dv_{\mathbb{H}^{N}}\,,
$$
for all $u \in C^{\infty}_{0}(\mathbb{H}^{N})$. Furthermore, the leading terms a $r\rightarrow 0$ and $r \rightarrow\infty$, namely $\alpha_{k,l}^1 $ and $\alpha_{k,l}^k$ are explicitly given as follows
$$\alpha_{k,l}^1:=
\left \{\begin{array}{lllll} 
d_{2(m-h)} \,a_{h} & \text{if $k=2m$ and $l=2h$}\,,\\
2^{4(m-h)-2}\,a_{2(m-h)}\,a_{h}+d_{2(m-h-1)} \,a_{h+1} & \text{if $k=2m$ and $l=2h+1$, $h\neq m-1$}\,,\\
 a_1\,a_{m-1}   & \text{if $k=2m$ and $l=2m-1$}\,,\\
4a_1\, a_{h} d_{2(m-h)} + \frac{1}{4} a_{m} & \text{if $k=2m+1$ and $l=2h$}\,,\\
 \frac{1}{4} a_{m} + 4^{2(m-h)}\, a_{2(m-h)}\,a_{h}\,a_1+4\,d_{2(m - h -1)} a_{h + 1} a_1  & \text{if $k=2m+1$ and $l=2h+1$, $h\neq m-1$}\,,\\
 \frac{1}{4} a_{m} + 4\,a_2\, a_{m - 1} & \text{if $k=2m+1$ and $l=2m-1$}\,,
 \end{array}\right.
 $$

  $$
 \alpha_{k,l}^k:=
\left \{\begin{array}{lllll} 
e_{2(m-h)}\, b_{h,4(m-h)}& \text{if $k=2m$ and $l=2h$}\,,\\
e_{2(m-h-1)}\, b_{h+1,4(m-h-1)} & \text{if $k=2m$ and $l=2h+1$, $h\neq m-1$}\,,\\
\frac{9}{16}\, b_{m-1,4}  & \text{if $k=2m$ and $l=2m-1$}\,,\\
\frac{1}{4} b_{m,2} & \text{if $k=2m+1$ and $l<k$}\,,

 \end{array}\right.$$

where $a_0=1$, $d_0=0$ and, for any $\gamma$ and $\beta$ positive integers, $a_{\gamma}=\frac{(N-1)^{2\gamma}}{2^{4\gamma}}$, $b_{\gamma,\beta} = \prod_{j = 0}^{\gamma-1}\frac{(N+(\beta+4j))^2(N-(\beta+4j)-4)^2}{16}$, $d_{\gamma}$ and $e_{\gamma}$ are the constants defined in Theorem \ref{mainhigher0}.
\end{thm}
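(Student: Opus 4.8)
The plan is to reduce the arbitrary pair $(k,l)$ with $l>0$ to the already-established building blocks: the base case $k=2,l=1$ (Theorem \ref{PRHinequality}), the case $l=0$ (Theorem \ref{mainhigher0}, inequalities \eqref{PR}, \eqref{poincareeq}), and the weighted Rellich inequality \eqref{yangsukong}. The strategy is an iteration in which one alternately ``peels off'' a Laplacian (or a gradient-of-Laplacian) from $\nabla_{\hn}^k u$ by means of a Poincaré-type or Rellich-type estimate, feeding the resulting lower-order integrals back into the induction hypothesis, exactly as in the proof of Theorem \ref{mainhigher0} but keeping track of the $\nabla_{\hn}^l u$ term on the right-hand side rather than reducing all the way down to $u^2$.

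First I would set up the induction on $k-l$. The key observation is the factorisation $\nabla_{\hn}^k u = \nabla_{\hn}^{k-l}(\nabla_{\hn}^l u)$ in the even-difference case, so that writing $w = \Delta^l u$ (or $w=\nabla_{\hn}\Delta^{(l-1)/2}u$, according to parities) one is reduced to estimating $\int|\nabla_{\hn}^{k-l}w|^2$ from below by $(\tfrac{N-1}{2})^{2(k-l)}\int w^2$ plus Hardy remainders in $w$; that is precisely Theorem \ref{mainhigher0} applied to $w$, yielding
$$\int_{\hn}|\nabla_{\hn}^k u|^2\,dv_{\hn}-\Big(\tfrac{N-1}{2}\Big)^{2(k-l)}\int_{\hn}|\nabla_{\hn}^l u|^2\,dv_{\hn}\ \ge\ \sum_{i=1}^{k-l}c_{k-l}^i\int_{\hn}\frac{(\nabla_{\hn}^l u)^2}{r^{2i}}\,dv_{\hn}.$$
Then each term $\int r^{-2i}(\nabla_{\hn}^l u)^2$ must be pushed down to $u$: if $l=2h$ one applies \eqref{yangsukong} $h$ times (with $\beta=2i, 2i+2,\dots$), each application producing the factor $\tfrac{(N+\beta)^2(N-\beta-4)^2}{16}$ on the leading $1/r^{4+\beta}$ weight and the Poincaré factor $\tfrac{(N-1)^2}{16}$ on the bottom weight — which is exactly how the constants $a_\gamma$ and $b_{\gamma,\beta}$ in the statement arise. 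If $l=2h+1$ is odd one first spends one Poincaré step $\int|\nabla_{\hn}\Delta^h u|^2\ge(\tfrac{N-1}{2})^2\int(\Delta^h u)^2+\tfrac14\int r^{-2}(\Delta^h u)^2$ (this is \eqref{poincareeq} applied to $\Delta^h u$, giving the $\tfrac14 a_m$ contributions) and then proceeds with $h$ Rellich steps as before. Running this bookkeeping, the leading constants $\alpha_{k,l}^1$ (coming from the $1/r^2$ weight, which survives only through chains of Poincaré factors $a_\gamma$ and the $d_\gamma$'s) and $\alpha_{k,l}^k$ (coming from the top weight $1/r^{2k}$, built purely from $e_\gamma$ and $b_{\gamma,\beta}$ factors) match the six, resp. four, cases listed, according to the parities of $k$ and $l$ and whether $l=k-1$. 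The intermediate constants $\alpha_{k,l}^i$, $2\le i\le k-1$, are positive finite combinations of these building blocks; positivity is automatic since every Rellich and Poincaré constant invoked is strictly positive under $N>2k$ (which guarantees $\beta<N-4$ in all invocations of \eqref{yangsukong}), so one only needs to check that the highest weight actually appears with a nonzero coefficient — it does, as it comes from a product of the strictly positive leading Rellich constants.

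The sharpness of $(\tfrac{N-1}{2})^{2(k-l)}$ follows from \eqref{high} being an equality in the limit (no remainder can improve the Poincaré constant, since the infimum is approached by suitable sequences concentrating at infinity), so that is immediate. I would present the even and odd sub-cases in parallel and, as in Theorem \ref{mainhigher0}, relegate the fully explicit computation of the constants for one representative sub-case and merely indicate that the others follow by the same substitution scheme. The main obstacle is purely combinatorial rather than conceptual: organising the iterated substitutions so that the coefficient of $1/r^2$ and of $1/r^{2k}$ collapse to the closed forms claimed — in particular tracking how the Poincaré factor $\tfrac{(N-1)^2}{16}$ produced at the bottom of each Rellich step \eqref{yangsukong} interacts with the $d_\gamma$ partial sums inherited from Theorem \ref{mainhigher0}, and making sure the odd cases' extra $\tfrac14$-factor from \eqref{poincareeq} lands in the right place. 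Once the pattern is verified on $k=2m,l=2h$ and on one odd instance, the remaining cases are mechanical.
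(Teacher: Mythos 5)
Your reduction works, and essentially coincides with the paper's proof, only in the cases where $l$ is \emph{even}: there $w=\Delta^{l/2}u$ is a scalar, the factorisation $\nabla^k_{\hn}u=\nabla^{k-l}_{\hn}w$ is legitimate, Theorem \ref{mainhigher0} applies to $w$, and the weighted terms $\int r^{-2i}(\Delta^{l/2}u)^2$ are pushed down by iterating \eqref{yangsukong} (Lemma 4.2, inequality \eqref{yangxtended}); your bookkeeping even reproduces the stated constants, e.g.\ for $k=2m+1$, $l=2h$ one has $d_{2(m-h)+1}\,a_h=4a_1a_hd_{2(m-h)}+\tfrac14 a_m$ because $a_{m-h}a_h=a_m$. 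The genuine gap is in the cases with $l$ \emph{odd}. First, your opening step applies Theorem \ref{mainhigher0} to $w=\nabla_{\hn}\Delta^{(l-1)/2}u$, which is a vector field: the operators $\nabla^{j}_{\hn}$ and the inequality \eqref{global} are only defined and proved for scalar functions, so the display $\int|\nabla^k_{\hn}u|^2-(\tfrac{N-1}{2})^{2(k-l)}\int|\nabla^l_{\hn}u|^2\ge\sum_i c^i_{k-l}\int r^{-2i}|\nabla^l_{\hn}u|^2$ is not justified (making sense of $\nabla^{k-l}$ on vector fields would require a separate Bochner-type argument with curvature terms, not available here). Second, even granting it, your recipe for pushing the resulting terms down to $u$ uses \eqref{poincareeq} applied to $\Delta^h u$, which is an \emph{unweighted} inequality: it says nothing about $\int r^{-2i}|\nabla_{\hn}\Delta^h u|^2$ for $i\ge1$, and no weighted Hardy inequality for gradients is among the tools you invoke; moreover that step produces lower bounds by $(\Delta^h u)^2$-terms rather than the subtracted $|\nabla_{\hn}\Delta^h u|^2$-term that the statement requires.

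What is missing is precisely the ingredient you list in your preamble but never use: Theorem \ref{PRHinequality} (the case $k=2$, $l=1$). The paper's route for $l=2h+1$ is to factor through the \emph{scalar} $\Delta^{h+1}u$, i.e.\ apply \eqref{global} of order $2(m-h-1)$ to $\Delta^{h+1}u$ (or, when $k$ is odd, first peel the top gradient with \eqref{poincareeq}), and then apply \eqref{npoincare} to $\Delta^h u$ to convert $\int(\Delta^{h+1}u)^2$ into $(\tfrac{N-1}{2})^2\int|\nabla_{\hn}\Delta^h u|^2$ \emph{plus} scalar remainders $\int r^{-2}(\Delta^h u)^2$ and $\int r^{-4}(\Delta^h u)^2$, which are then handled by \eqref{yangxtended}; the weighted terms $\int r^{-2i}(\Delta^{h+1}u)^2$ coming from the first step are likewise scalar and handled by \eqref{yangxtended} with $\gamma=h+1$. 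This is exactly why the $k=2$, $l=1$ inequality is singled out as a key ingredient, and it is also where the particular shapes of $\alpha^1_{k,l}$ and $\alpha^k_{k,l}$ in the odd-$l$ cases come from; without it the odd-$l$ cases of your argument do not go through as written. A minor further point: to get all $k$ weights $r^{-2i}$, $1\le i\le k$, with positive coefficients one should also check (as the paper does with the elementary monotonicity argument for $g(j,i)=2j+2i$) that every intermediate exponent is actually attained, not only the top one.
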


The proof is achieved by considering separately four cases. In each proof we will exploit the following technical lemma whose proof can be obtained by induction, iterating \eqref{yangsukong}. Notice that, except for the main statements, for shortness we will always write $\Delta_{\mathbb{H}^{N}}=\Delta$.

\begin{lem}
Let $\gamma$ be a positive integer. For all $u \in C^{\infty}_{0}(\mathbb{H}^{N})$ and $0\leq \beta<N-4\gamma$, the following inequality holds
\begin{align}\label{yangxtended}
\int_{\mathbb{H}^{N}} \frac{(\Delta^{\gamma} u)^2}{r^{\beta}} \ dv_{\mathbb{H}^{N}}\geq a_{\gamma}\int_{\mathbb{H}^{N}} \frac{u^2}{r^{\beta}} \ dv_{\mathbb{H}^{N}} +\sum_{ j= 1}^{2\gamma-1} a_{\gamma,\beta}^j  \int_{\mathbb{H}^{N}} \frac{u^2}{r^{2j+\beta}} \ dv_{\mathbb{H}^{N}}+b_{\gamma,\beta} \int_{\mathbb{H}^{N}} \frac{u^2}{r^{4\gamma+\beta}} \ dv_{\mathbb{H}^{N}}   \,, 
\end{align}
where $a_{\gamma}=\frac{(N-1)^{2\gamma}}{2^{4\gamma}}$, $a_{m,\beta}^j$ are suitable positive constants and $b_{\gamma,\beta} = \prod_{j = 0}^{\gamma-1}\frac{(N+(\beta+4j))^2(N-(\beta+4j)-4)^2}{16}$.

\end{lem}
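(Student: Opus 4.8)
The plan is to prove \eqref{yangxtended} by induction on $\gamma$, using \eqref{yangsukong} as the single engine. For $\gamma=1$ the statement \emph{is} \eqref{yangsukong}: one reads off $a_1=\frac{(N-1)^2}{16}=\frac{(N-1)^2}{2^4}$, $b_{1,\beta}=\frac{(N+\beta)^2(N-\beta-4)^2}{16}$ (which is the value of $\prod_{j=0}^{\gamma-1}\frac{(N+(\beta+4j))^2(N-(\beta+4j)-4)^2}{16}$ at $\gamma=1$), and $a_{1,\beta}^1=\frac{(N-2-\beta)(N-2+\beta)(N-1)}{8}$. All three are strictly positive whenever $0\le\beta<N-4$, since then $N-\beta-4>0$ and a fortiori $N-2-\beta>0$; this disposes of the base case and also fixes the shape of the constants we must reproduce.

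For the inductive step I would assume \eqref{yangxtended} for some $\gamma\ge1$ and all admissible exponents, and fix $0\le\beta<N-4(\gamma+1)$. Writing $\Delta^{\gamma+1}u=\Delta^{\gamma}(\Delta u)$ and applying the inductive hypothesis with $\Delta u$ in place of $u$ (legitimate because $\beta<N-4(\gamma+1)<N-4\gamma$) gives
$$\int_{\hn}\frac{(\Delta^{\gamma+1}u)^2}{r^{\beta}}\,dv_{\hn}\ge a_{\gamma}\int_{\hn}\frac{(\Delta u)^2}{r^{\beta}}\,dv_{\hn}+\sum_{j=1}^{2\gamma-1}a_{\gamma,\beta}^{j}\int_{\hn}\frac{(\Delta u)^2}{r^{2j+\beta}}\,dv_{\hn}+b_{\gamma,\beta}\int_{\hn}\frac{(\Delta u)^2}{r^{4\gamma+\beta}}\,dv_{\hn}.$$
Then I would apply \eqref{yangsukong} to each integral $\int_{\hn}(\Delta u)^2/r^{\beta'}\,dv_{\hn}$ on the right, i.e. for $\beta'\in\{\beta,2+\beta,\dots,4\gamma+\beta\}$; this is permissible precisely because the largest exponent $4\gamma+\beta$ satisfies $4\gamma+\beta<N-4$ by the hypothesis on $\beta$. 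Each application replaces a term at exponent $\beta'$ by a positive combination of terms at exponents $\beta'$, $\beta'+2$, $\beta'+4$, so after collecting we obtain a sum of integrals $\int_{\hn}u^2/r^{\beta+2i}\,dv_{\hn}$ with $i$ running from $0$ to $2(\gamma+1)$, each with a coefficient that is a finite sum of products of nonnegative constants.

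It remains to pin down the two extreme coefficients. The exponent $i=0$ receives a contribution only from the $\beta'=\beta$ term, with coefficient $a_{\gamma}\cdot\frac{(N-1)^2}{16}=a_{\gamma}a_1=\frac{(N-1)^{2(\gamma+1)}}{2^{4(\gamma+1)}}=a_{\gamma+1}$. The top exponent $i=2(\gamma+1)$, i.e. $4(\gamma+1)+\beta$, receives a contribution only from $b_{\gamma,\beta}\int(\Delta u)^2/r^{4\gamma+\beta}$ after applying \eqref{yangsukong} with $\beta'=4\gamma+\beta$, giving $b_{\gamma,\beta}\cdot\frac{(N+(\beta+4\gamma))^2(N-(\beta+4\gamma)-4)^2}{16}=b_{\gamma+1,\beta}$ by the telescoping definition of $b_{\gamma,\beta}$. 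The intermediate coefficients, those multiplying $\int u^2/r^{\beta+2i}$ for $1\le i\le 2\gamma+1$, are positive because they are sums of strictly positive multiples of the strictly positive numbers $\frac{(N-1)^2}{16}$, $\frac{(N-2-\beta')(N-2+\beta')(N-1)}{8}$, $\frac{(N+\beta')^2(N-\beta'-4)^2}{16}$ (each positive since every $\beta'<N-4$). This closes the induction.

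The step deserving care is not any individual computation but the bookkeeping that makes the induction self-sustaining: one must verify that the admissibility range $\beta<N-4\gamma$ built into \eqref{yangxtended} is exactly what is needed to legitimately invoke \eqref{yangsukong} at the highest exponent $4\gamma+\beta$ arising in the passage $\gamma\to\gamma+1$, and that the two explicit constants propagate — $a_{\gamma+1}=a_1a_\gamma$ and $b_{\gamma+1,\beta}=b_{\gamma,\beta}\cdot\frac{(N+(\beta+4\gamma))^2(N-(\beta+4\gamma)-4)^2}{16}$. Everything else is a routine reorganization of a finite positive sum, so positivity of the intermediate constants $a_{\gamma,\beta}^j$ is automatic and no closed form for them is required.
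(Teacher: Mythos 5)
Your proof is correct and follows exactly the route the paper intends: the paper offers no written-out proof, stating only that the lemma ``can be obtained by induction, iterating \eqref{yangsukong}'', and your argument is precisely that induction, with the right bookkeeping ($\beta<N-4(\gamma+1)$ is exactly what legitimizes applying \eqref{yangsukong} at the top exponent $4\gamma+\beta$) and the correct propagation of the extreme constants, $a_{\gamma+1}=a_1a_\gamma$ and $b_{\gamma+1,\beta}=b_{\gamma,\beta}\,\tfrac{(N+\beta+4\gamma)^2(N-\beta-4\gamma-4)^2}{16}$. Nothing further is needed.
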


\par \bigskip\par
 \subsection{Case $k=2m$ even and $l=2h$ even}
  \begin{thm}\label{mainhigheree}
Let $m,h$ be integers such that $0<h< m$ and $ N> 4m$. There exist $2m$ \emph{positive} constants $\alpha^i=\alpha^i(N,m,h)$ such that the following inequality holds  
 \begin{align}\label{highereven1}
\int_{\mathbb{H}^{N}}  (\Delta_{\hn}^{m} u)^2 \ dv_{\mathbb{H}^{N}} - \left( \frac{N-1}{2} \right)^{4(m-h)} \int_{\mathbb{H}^{N}}  (\Delta_{\hn}^{h} u)^{2} \ dv_{\mathbb{H}^{N}}  \geq 
  \sum_{i = 1}^{2m} \alpha^{i} \int_{\mathbb{H}^{N}} \frac{u^2}{r^{2i}} \ dv_{\mathbb{H}^{N}},
\end{align}
for all $u \in C^{\infty}_{0}(\mathbb{H}^{N})$. Furthermore, the leading terms a $r\rightarrow 0$ and $r \rightarrow\infty$ are explicitly given by
$$ \alpha^1:=d_{2(m-h)}\,a_h \quad \text{and}\quad \alpha^{2m}:=e_{2(m-h)}\, b_{h,4(m-h)} \,,$$
where, for any $\gamma$ and $\beta$ positive integers, $a_{\gamma}=\frac{(N-1)^{2\gamma}}{2^{4\gamma}}$, $b_{\gamma,\beta} = \prod_{j = 0}^{\gamma-1}\frac{(N+(\beta+4j))^2(N-(\beta+4j)-4)^2}{16}$, $d_{\gamma}$ and $e_{\gamma}$ are the constants defined in Theorem \ref{mainhigher0}. 
\end{thm}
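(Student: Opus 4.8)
The plan is to prove Theorem \ref{mainhigheree} by induction on the number of "extra" Laplacians $m-h$, using the already-established case $k=2$, $l=1$ (Theorem \ref{PRHinequality}) and the base case $k=2$, $l=0$ (inequality \eqref{PR}) as the engine, and the extended Rellich-type inequality \eqref{yangxtended} as the tool to push singular weights through iterated Laplacians. First I would observe that the left-hand side telescopes: writing $v=\Delta^h u$, the inequality becomes
\[
\int_{\hn}(\Delta^{m-h}v)^2\,dv_{\hn}-\left(\frac{N-1}{2}\right)^{4(m-h)}\int_{\hn}v^2\,dv_{\hn}\ge \text{(remainder in terms of }v\text{)},
\]
which is exactly the $l=0$ statement of Theorem \ref{mainhigher0} applied to $v$ with $k=2(m-h)$. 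That gives
\[
\int_{\hn}(\Delta^{m}u)^2\,dv_{\hn}-\left(\frac{N-1}{2}\right)^{4(m-h)}\int_{\hn}(\Delta^h u)^2\,dv_{\hn}\ge \sum_{i=1}^{2(m-h)}c_{2(m-h)}^i\int_{\hn}\frac{(\Delta^h u)^2}{r^{2i}}\,dv_{\hn},
\]
with $c_{2(m-h)}^1=d_{2(m-h)}$ and $c_{2(m-h)}^{2(m-h)}=e_{2(m-h)}$ from Theorem \ref{mainhigher0}.

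The second step is to convert each term $\int_{\hn}(\Delta^h u)^2/r^{2i}\,dv_{\hn}$ into weighted $L^2$ norms of $u$ itself via Lemma with $\gamma=h$ and $\beta=2i$; this is legitimate provided $2i<N-4h$, which holds since $i\le 2(m-h)$ and $N>4m$ forces $N-4h>4(m-h)\ge 2i$. Collecting all contributions, the weight $1/r^{2j}$ in the final sum picks up a coefficient $\alpha^j$ which is a finite sum of products of the $c_{2(m-h)}^i$ (all positive) and the coefficients $a_h$, $a_{h,2i}^\ell$, $b_{h,2i}$ from \eqref{yangxtended} (all positive); hence every $\alpha^j>0$. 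For the two leading terms I would track only the dominant contributions: the $1/r^2$ term of the output arises solely from the $i=1$ term $c_{2(m-h)}^1\int (\Delta^h u)^2/r^2$ hitting the $a_h\int u^2/r^2$ piece of \eqref{yangxtended} (all other sources produce higher powers $1/r^{2j}$, $j\ge 2$), giving $\alpha^1=d_{2(m-h)}\,a_h$; the most singular term $1/r^{4m}$ arises solely from $i=2(m-h)$, i.e. $c_{2(m-h)}^{2(m-h)}\int(\Delta^h u)^2/r^{4(m-h)}$, hitting the most singular piece $b_{h,4(m-h)}\int u^2/r^{4(m-h)+4h}=b_{h,4(m-h)}\int u^2/r^{4m}$ of \eqref{yangxtended}, giving $\alpha^{2m}=e_{2(m-h)}\,b_{h,4(m-h)}$, as claimed.

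The main obstacle is bookkeeping rather than conceptual: one must verify that no "collision" of weights from the intermediate terms $c_{2(m-h)}^i\int(\Delta^h u)^2/r^{2i}$ (for $1<i<2(m-h)$) or from the intermediate coefficients $a_{h,2i}^\ell$ in Lemma contaminates $\alpha^1$ or $\alpha^{2m}$ — i.e. that the extreme weights $1/r^2$ and $1/r^{4m}$ are genuinely produced by a unique channel each. This follows from the elementary monotonicity of exponents: in \eqref{yangxtended} the weights produced range over $1/r^{\beta}$ up to $1/r^{4\gamma+\beta}$, and applied with $\beta=2i$, $\gamma=h$ this is $1/r^{2i}$ up to $1/r^{2i+4h}$; the minimum $2i$ is attained only at $i=1$ (value $2$) by the least-singular term of the lemma, and the maximum $2i+4h$ is attained only at $i=2(m-h)$ (value $4m$) by the most-singular term. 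I would also need to double-check the edge-case constraints $\beta<N-4\gamma$ under the standing hypothesis $N>4m$, which as noted above is automatic. Finally, one observes that iterating this scheme (or arguing directly) yields the constant $\left(\frac{N-1}{2}\right)^{4(m-h)}$ with no loss, matching the sharp Poincaré constant of \eqref{high}, so sharpness of that constant is inherited; the remaining optimality claims are deferred to the discussion in Section \ref{21}.
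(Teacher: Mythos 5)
Your proposal is correct and follows essentially the same route as the paper: apply Theorem \ref{mainhigher0} with $k=2(m-h)$ to $v=\Delta_{\hn}^h u$, then push each weight $r^{-2i}$ through $\Delta_{\hn}^h$ via \eqref{yangxtended} with $\gamma=h$, $\beta=2i$, and use the monotonicity of the exponents $2j+2i$ to see that the extreme weights $r^{-2}$ and $r^{-4m}$ come from a single channel each, yielding $\alpha^1=d_{2(m-h)}a_h$ and $\alpha^{2m}=e_{2(m-h)}b_{h,4(m-h)}$. The only cosmetic difference is your announced induction on $m-h$ (and appeal to Theorem \ref{PRHinequality}), which your argument never actually needs; your explicit check of the admissibility constraint $2i<N-4h$ is a welcome detail the paper leaves implicit.
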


 \begin{proof}
 By applying \eqref{global} with $k=2(m-h)$ and \eqref{yangxtended} with $\gamma=h$ and $\beta=2i$ we deduce
 $$\int_{\mathbb{H}^{N}}  (\Delta^{m} u)^2 \ dv_{\mathbb{H}^{N}} =\int_{\mathbb{H}^{N}}  (\Delta^{m-h}(\Delta^h u))^2 \ dv_{\mathbb{H}^{N}} $$
 $$\geq \left( \frac{N-1}{2} \right)^{4(m-h)} 
\int_{\mathbb{H}^{N}} (\Delta^h u)^2 \ dv_{\mathbb{H}^{N}}+ \sum_{i= 1}^{2(m-h)} c_{2(m-h)}^i\int_{\mathbb{H}^{N}} \frac{(\Delta^h u)^2}{r^{2i}} \ dv_{\mathbb{H}^{N}}\geq \left( \frac{N-1}{2} \right)^{4(m-h)} 
\int_{\mathbb{H}^{N}} (\Delta^h u)^2 \ dv_{\mathbb{H}^{N}}$$
$$+ \sum_{i= 1}^{2(m-h)} c_{2(m-h)}^i  \left(a_{h}\int_{\mathbb{H}^{N}} \frac{u^2}{r^{2i}} \ dv_{\mathbb{H}^{N}} +\sum_{ j= 1}^{2h-1} a_{h,2i}^j  \int_{\mathbb{H}^{N}} \frac{u^2}{r^{2j+2i}} \ dv_{\mathbb{H}^{N}}+b_{h,2i} \int_{\mathbb{H}^{N}} \frac{u^2}{r^{4h+2i}} \ dv_{\mathbb{H}^{N}} \right)\,,$$
where all the constants are positive. Settled $g(j,i):=2j+2i$, for $0\leq j\leq 2h$ and $1\leq i\leq 2(m-h)$ it is readily verified that $g$ has a unique global minimum $g(0,1)=1$ and a unique global maximum $g(2h,2(m-h))=4m$. Furthermore, by the fact that $g(j,1)$ goes monotonically from $2$ to $4h+2$ and $g(2h,i)$ goes monotonically from $4h+2$ to $4m$, we deduce the existence of $2m$ positive constants $\alpha^i=\alpha^i(N,m,h)$ such that \eqref{highereven1} holds. Moreover, $$ \alpha^1=d_{2(m-h)} \,a_{h} \quad \text{and} \quad  \alpha^k:=e_{2(m-h)}\, b_{h,4(m-h)}\,.$$
 \end{proof}
 
 \par \medskip\par

 \subsection{Case $k=2m$ even and $l=2h+1$ odd}

 \begin{thm}\label{mainhighereo}
Let $m,h$ be integers such that $0\leq h< m$ and $ N> 4m$. There exist $2m$ \emph{positive} constants $\bar\alpha^i=\bar \alpha^i(N,m,h)$ such that the following inequality holds  
 \begin{align}\label{highereven2}
\int_{\mathbb{H}^{N}}  (\Delta_{\hn}^{m} u)^2 \ dv_{\mathbb{H}^{N}} - \left( \frac{N-1}{2} \right)^{4(m-h)-2} \int_{\mathbb{H}^{N}}  |\nabla_{\hn} \Delta_{\hn}^{h} u|^{2} \ dv_{\mathbb{H}^{N}}  \geq 
  \sum_{i = 1}^{2m}\bar \alpha^{i} \int_{\mathbb{H}^{N}} \frac{u^2}{r^{2i}} \ dv_{\mathbb{H}^{N}}\,,
\end{align}
for all $u \in C^{\infty}_{0}(\mathbb{H}^{N})$. Furthermore, the leading terms a $r\rightarrow 0$ and $r \rightarrow\infty$ are explicitly given by:\par\medskip\par
\noindent
if $0\leq h<m-1$
$$\bar \alpha^1:=2^{4(m-h)-2}\,a_{2(m-h)}\,a_{h}+d_{2(m-h-1)} \,a_{h+1} \quad \text{and} \quad \bar\alpha^k:=e_{2(m-h-1)}\, b_{h+1,4(m-h-1)}\,,$$
if $h=m-1$ 
$$\bar  \alpha^1:=a_1\,a_{m-1} \quad \text{and} \quad \bar\alpha^k:=\frac{9}{16}\, b_{m-1,4}\,.$$
where $a_0=1$ and, for any $\gamma$ and $\beta$ positive integers, $a_{\gamma}=\frac{(N-1)^{2\gamma}}{2^{4\gamma}}$, $b_{\gamma,\beta} = \prod_{j = 0}^{\gamma-1}\frac{(N+(\beta+4j))^2(N-(\beta+4j)-4)^2}{16}$, $d_{\gamma}$ and $e_{\gamma}$ are the constants defined in Theorem \ref{mainhigher0}.
\end{thm}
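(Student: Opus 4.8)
The plan is to mimic exactly the strategy used in the proof of Theorem~\ref{mainhigheree}, the only difference being that the ``inner'' operator is now an odd‑order operator $\nabla_{\hn}\Delta_{\hn}^{h}$ rather than an even‑order one. First I would split the $2m$‑th order quantity as $\int_{\hn}(\Delta^{m}u)^{2}=\int_{\hn}\bigl(\Delta^{m-h-1}(\nabla(\Delta^{h}u))\bigr)^{2}$ when $h<m-1$, so that the remaining operator applied to $w:=\nabla_{\hn}\Delta_{\hn}^{h}u$ has even order $2(m-h-1)$; when $h=m-1$ there is no remaining iterate and one works directly with $\int_{\hn}|\nabla(\Delta^{m-1}u)|^{2}$ versus $\int_{\hn}(\Delta^{m-1}u)^{2}$ via \eqref{poincareeq}. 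In the generic case $h<m-1$, I would apply Theorem~\ref{mainhigher0} (the odd case $k=2(m-h)-1$, with base point $\Delta^{h}u$) or, equivalently, \eqref{global} with $k=2(m-h)-1$ applied to $\Delta^{h}u$, to get

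\begin{align*}
\int_{\hn}(\Delta^{m}u)^{2}\,dv_{\hn}
&\geq\Bigl(\tfrac{N-1}{2}\Bigr)^{4(m-h)-2}\int_{\hn}|\nabla(\Delta^{h}u)|^{2}\,dv_{\hn}
+\sum_{i=1}^{2(m-h)-1}c_{2(m-h)-1}^{i}\int_{\hn}\frac{|\nabla(\Delta^{h}u)|^{2}}{r^{2i}}\,dv_{\hn}.
\end{align*}

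Next I would need a weighted Hardy--Rellich inequality bounding $\int_{\hn}|\nabla(\Delta^{h}u)|^{2}/r^{2i}$ from below by a sum of terms $\int_{\hn}u^{2}/r^{2j+2i}$ with positive constants. This is the odd‑order analogue of \eqref{yangxtended}: combining the first‑order weighted Hardy inequality of \cite{Yang} (controlling $\int|\nabla v|^{2}/r^{\beta}$ by $\int v^{2}/r^{\beta}$ and $\int v^{2}/r^{\beta+2}$) with \eqref{yangxtended} applied to $v=\Delta^{h}u$ with $\gamma=h$ yields an inequality of the shape
\[
\int_{\hn}\frac{|\nabla(\Delta^{h}u)|^{2}}{r^{\beta}}\,dv_{\hn}\geq a_{h}\,a_{0}^{\flat}\int_{\hn}\frac{u^{2}}{r^{\beta}}\,dv_{\hn}+\dots+b_{h,\beta}^{\flat}\int_{\hn}\frac{u^{2}}{r^{4h+2+\beta}}\,dv_{\hn},
\]
with all constants positive and the lowest‑weight coefficient and highest‑weight coefficient computable explicitly. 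Plugging this into the previous display with $\beta=2i$, $i=1,\dots,2(m-h)-1$, and also estimating the leading Poincar\'e term $\int|\nabla(\Delta^{h}u)|^{2}$ by \eqref{global} with $k=2h+1$ applied... wait, more precisely one keeps that term as the subtracted left side and uses only the remainder terms, exactly as in Theorem~\ref{mainhigheree}.

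The bookkeeping step is then identical in spirit to the proof of Theorem~\ref{mainhigheree}: one introduces the exponent function $g(j,i)=2j+2i+(\text{possible }+2)$, checks it has a unique global minimum (giving the $1/r^{2}$ term) and a unique global maximum (giving the $1/r^{4m}$ term), and that intermediate exponents are hit along monotone ``edges'', so that collecting like powers produces $2m$ positive constants $\bar\alpha^{i}$. The leading coefficient $\bar\alpha^{1}$ comes from two sources — the contribution $d_{2(m-h-1)}$ from the odd‑order inequality \eqref{global} times the $a_{h+1}$ factor picked up from $w=\nabla\Delta^{h}u$ carrying $2h+2$ derivatives, plus the ``cross term'' $2^{4(m-h)-2}a_{2(m-h)}a_{h}$ coming from the product of the top Poincar\'e constant with the first‑order Hardy gain — and $\bar\alpha^{k}$ comes purely from the product of the top singular constants, giving $e_{2(m-h-1)}b_{h+1,4(m-h-1)}$; when $h=m-1$ one reads these off directly from \eqref{PR} (the $\tfrac{9}{16}$ and $b_{m-1,4}$) and \eqref{poincareeq} (the $a_1 a_{m-1}$), with no iteration needed. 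I expect the main obstacle to be purely notational: making sure the odd‑order weighted inequality for $\int|\nabla(\Delta^{h}u)|^{2}/r^{\beta}$ is stated with the correct range of admissible $\beta$ (one needs $\beta+2<N$ from the gradient step and $4h+\beta<N$ from \eqref{yangxtended}, hence $N>4m$ suffices since $\beta\leq 2(2(m-h)-1)$ and $4h+\beta\leq 4m-2<N$) and that none of the constants $\bar\alpha^{i}$ accidentally vanish — which is guaranteed by $N>4m$ making every factor of the form $N-(\beta+4j)-4$ strictly positive.
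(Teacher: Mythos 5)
There is a genuine gap at the very first step. Your opening identity $\int_{\hn}(\Delta^{m}u)^{2}=\int_{\hn}\bigl(\Delta^{m-h-1}(\nabla\Delta^{h}u)\bigr)^{2}$ cannot be right: $\nabla\Delta^{h}u$ is a vector field and the orders do not match ($2(m-h-1)+2h+1=2m-1\neq 2m$); the correct factorization is $\Delta^{m}u=\Delta^{m-h-1}(\Delta^{h+1}u)$, whose inner block has \emph{even} order. More importantly, the display you base everything on, namely $\int_{\hn}(\Delta^{m}u)^{2}\geq \left(\frac{N-1}{2}\right)^{4(m-h)-2}\int_{\hn}|\nabla\Delta^{h}u|^{2}+\sum_{i}c^{i}_{2(m-h)-1}\int_{\hn}|\nabla\Delta^{h}u|^{2}r^{-2i}$, is not an instance of \eqref{global}: applying \eqref{global} with odd $k=2(m-h)-1$ to $w=\Delta^{h}u$ bounds $\int_{\hn}|\nabla\Delta^{m-1}u|^{2}$ (not $\int_{\hn}(\Delta^{m}u)^{2}$) from below by weighted integrals of $(\Delta^{h}u)^{2}$ (not of $|\nabla\Delta^{h}u|^{2}$). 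What you assert is a weighted strengthening of essentially the statement being proved, and nothing in the paper supplies it. The bridge between the even-order left side and the odd-order Poincar\'e term is Theorem \ref{PRHinequality}, i.e. \eqref{npoincare}, which you never invoke: the paper applies \eqref{global} with $k=2(m-h-1)$ to $\Delta^{h+1}u$, then \eqref{npoincare} to $\Delta^{h}u$ to uncover $\left(\frac{N-1}{2}\right)^{4(m-h)-2}\int_{\hn}|\nabla\Delta^{h}u|^{2}$ together with the remainders $\frac{(N-1)^{2}}{16}\int_{\hn}(\Delta^{h}u)^{2}r^{-2}$ and $\frac{9}{16}\int_{\hn}(\Delta^{h}u)^{2}r^{-4}$, and finally converts all remainders into $u^{2}r^{-2j}$ terms via \eqref{yangxtended} (with $\gamma=h$, $\beta=2,4$ and $\gamma=h+1$, $\beta=2i$). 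In particular no weighted first-order Hardy inequality for $\int_{\hn}|\nabla v|^{2}r^{-\beta}$ is needed, whereas your route requires one that is not part of the paper's toolkit and is only vaguely attributed to \cite{Yang}.

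Even granting your first display and that external gradient inequality, the constants would not come out as stated: your chain would produce a leading coefficient built from $d_{2(m-h)-1}$ (the $c^{1}$ constant of the odd case $k=2(m-h)-1$) and a top coefficient built from $e_{2(m-h)-1}$, rather than the claimed $2^{4(m-h)-2}a_{2(m-h)}a_{h}+d_{2(m-h-1)}a_{h+1}$ and $e_{2(m-h-1)}b_{h+1,4(m-h-1)}$, which encode precisely the paper's chain; you state the correct final values without a computation that actually follows from your steps. Likewise, in the case $h=m-1$ the required comparison is between $\int_{\hn}(\Delta^{m}u)^{2}$ and $\int_{\hn}|\nabla\Delta^{m-1}u|^{2}$, i.e. again \eqref{npoincare} applied to $\Delta^{m-1}u$; citing \eqref{poincareeq} and \eqref{PR} there is incorrect, since those compare $|\nabla w|^{2}$ with $w^{2}$ and $(\Delta w)^{2}$ with $w^{2}$. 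Your exponent bookkeeping and the observation that $N>4m$ keeps all factors positive are fine, but they rest on the unjustified first step.
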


 \begin{proof}
 Let $0< h<m-1$, by applying first \eqref{global} with $k=2(m-h-1)$, then \eqref{npoincare} and finally \eqref{yangxtended} with $\gamma=h,h+1$ and $\beta=2,4,2i$, we deduce
 $$
 \int_{\mathbb{H}^{N}}  (\Delta^{m} u)^2 \ dv_{\mathbb{H}^{N}} =\int_{\mathbb{H}^{N}}  (\Delta^{m-h-1}(\Delta^{h+1} u))^2 \ dv_{\mathbb{H}^{N}} $$
$$ \geq \left( \frac{N-1}{2} \right)^{4(m-h-1)} \int_{\mathbb{H}^{N}} (\Delta^{h+1} u)^2 \ dv_{\mathbb{H}^{N}}+ \sum_{i= 1}^{2(m-h-1)} c_{2(m-h-1)}^i\int_{\mathbb{H}^{N}} \frac{(\Delta^{h+1} u)^2}{r^{2i}} \ dv_{\mathbb{H}^{N}} $$
$$ \geq \left( \frac{N-1}{2} \right)^{4(m-h)-2} \int_{\mathbb{H}^{N}} |\nabla \Delta^{h} u|^2 \ dv_{\mathbb{H}^{N}}
 +\frac{1}{4}\left( \frac{N-1}{2} \right)^{4(m-h)-2} \int_{\mathbb{H}^{N}} \frac{(\Delta^{h} u)^2}{r^{2}} \ dv_{\mathbb{H}^{N}}$$
 $$+ \frac{9}{16} \left( \frac{N-1}{2} \right)^{4(m-h-1)} \int_{\mathbb{H}^{N}} \frac{(\Delta^{h} u)^2}{r^{4}} \ dv_{\mathbb{H}^{N}} 
  + \sum_{i= 1}^{2(m-h-1)} c_{2(m-h-1)}^i\int_{\mathbb{H}^{N}} \frac{(\Delta^{h+1} u)^2}{r^{2i}} \ dv_{\mathbb{H}^{N}}$$
  $$
\geq \left( \frac{N-1}{2} \right)^{4(m-h)-2} \int_{\mathbb{H}^{N}} |\nabla \Delta^{h} u|^2 \ dv_{\mathbb{H}^{N}}$$
$$ +\frac{1}{4}\left( \frac{N-1}{2} \right)^{4(m-h)-2} \left( a_{h}\int_{\mathbb{H}^{N}} \frac{u^2}{r^{2}} \ dv_{\mathbb{H}^{N}} +\sum_{ j= 1}^{2h-1} a_{h,2}^j  \int_{\mathbb{H}^{N}} \frac{u^2}{r^{2j+2}} \ dv_{\mathbb{H}^{N}}+b_{h,2} \int_{\mathbb{H}^{N}} \frac{u^2}{r^{4h+2}} \ dv_{\mathbb{H}^{N}}   \right)$$
$$+\frac{9}{16} \left( \frac{N-1}{2} \right)^{4(m-h-1)}  \left(   a_{h}\int_{\mathbb{H}^{N}} \frac{u^2}{r^{4}} \ dv_{\mathbb{H}^{N}} +\sum_{ j= 1}^{2h-1} a_{h,4}^j  \int_{\mathbb{H}^{N}} \frac{u^2}{r^{2j+4}} \ dv_{\mathbb{H}^{N}}+b_{h,4} \int_{\mathbb{H}^{N}} \frac{u^2}{r^{4h+4}} \ dv_{\mathbb{H}^{N}}  \right)
$$
$$+\sum_{i= 1}^{2(m-h-1)} c_{2(m-h-1)}^i \left(a_{h+1}\int_{\mathbb{H}^{N}} \frac{u^2}{r^{2i}} \ dv_{\mathbb{H}^{N}} +\sum_{ j= 1}^{2h+1} a_{h+1,2i}^j  \int_{\mathbb{H}^{N}} \frac{u^2}{r^{2j+2i}} \ dv_{\mathbb{H}^{N}}+b_{h+1,2i} \int_{\mathbb{H}^{N}} \frac{u^2}{r^{4h+4+2i}} \ dv_{\mathbb{H}^{N}} \right)\,.
$$

Hence, with an argument similar to that applied in the proof of Theorem \ref{mainhigheree}, it's readily deduced the existence of $2m$ positive constants $\bar\alpha^i=\alpha^i(N,m,h)$ such that \eqref{highereven2} holds. Furthermore,
$$\bar \alpha^1=\frac{1}{4}\left( \frac{N-1}{2} \right)^{4(m-h)}\,a_{h}+d_{2(m-h-1)} \,a_{h+1} \quad \text{and} \quad \bar\alpha^k=e_{2(m-h-1)}\, b_{h+1,4(m-h-1)}\,.$$
When $h=0$ the above computations may be slightly modified to show the validity of \eqref{highereven2}. Furthermore, by setting $a_0=0$, the leading terms are still given as above.\par
When $h=m-1$, the existence of $2m$ positive constants $\bar\alpha^i=\alpha^i(N,m,h)$ such that \eqref{highereven2} holds follows similarly by applying first \eqref{npoincare} and then \eqref{yangxtended}, with $\gamma=m-1$ and $\beta=2,4$. Finally, the leading terms turn out to be
$$ \bar\alpha^1= \frac{(N-1)^2}{16}\,a_{m-1} \quad \text{and} \quad \bar \alpha^k=\frac{9}{16}\, b_{m-1,4}\,.$$

\end{proof}

 \par \medskip\par
 \subsection{Case $k=2m + 1$ odd and $l=2h$ even}

 \begin{thm}
Let $m,h$ be integers such that $0< h\leq m$ and $ N> 4m + 2$. There exist $2m+1$ \emph{positive} constants $\delta^i=\delta^i(N,m,h)$ such that the following inequality holds
 
 \begin{align}\label{higherodd1}
\int_{\mathbb{H}^{N}}  |\nabla_{\hn} (\Delta_{\hn}^{m} u) |^2 \ dv_{\mathbb{H}^{N}} - \left( \frac{N-1}{2} \right)^{4(m-h) + 2} \int_{\mathbb{H}^{N}}  ( \Delta_{\hn}^{h} u)^{2} \ dv_{\mathbb{H}^{N}}  \geq 
  \sum_{i = 1}^{2m + 1} \delta^{i} \int_{\mathbb{H}^{N}} \frac{u^2}{r^{2i}} \ dv_{\mathbb{H}^{N}},
\end{align}
for all $u \in C^{\infty}_{0}(\mathbb{H}^{N})$. Furthermore, the leading terms as $r\rightarrow 0$ and $r \rightarrow\infty$ are explicitly given by:\par\medskip\par
$$\delta^1:= 4a_1\, a_{h} d_{2(m-h)} + \frac{1}{4} a_{m}\quad \text{and} \quad   \delta^{2m +1} := \frac{1}{4} b_{m,2} $$

where $d_0=0$ and, for any $\gamma$ and $\beta$ positive integers, $a_{\gamma}=\frac{(N-1)^{2\gamma}}{2^{4\gamma}}$, $b_{\gamma,\beta} = \prod_{j = 0}^{\gamma-1}\frac{(N+(\beta+4j))^2(N-(\beta+4j)-4)^2}{16}$, $d_{\gamma}$ and $e_{\gamma}$ are the constants defined in Theorem \ref{mainhigher0}.

\end{thm}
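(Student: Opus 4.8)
The plan is to follow the same bootstrapping strategy used in Theorems~\ref{mainhigheree} and \ref{mainhighereo}, inserting the odd-order building block \eqref{poincareeq} once to account for the single gradient. First I would split off the outermost gradient by writing $|\nabla_{\hn}(\Delta^m u)|^2$ and applying \eqref{poincareeq} to the function $\Delta^m u$, obtaining
$$\int_{\hn}|\nabla(\Delta^m u)|^2\,dv_{\hn}\ge\left(\frac{N-1}{2}\right)^2\int_{\hn}(\Delta^m u)^2\,dv_{\hn}+\frac14\int_{\hn}\frac{(\Delta^m u)^2}{r^2}\,dv_{\hn}\,.$$
For the first term on the right I would then invoke \eqref{global} with $k=2(m-h)$ applied to $\Delta^h u$ (using $(\Delta^m u)^2=(\Delta^{m-h}(\Delta^h u))^2$), which produces $\left(\frac{N-1}{2}\right)^{4(m-h)+2}\int_{\hn}(\Delta^h u)^2\,dv_{\hn}$ plus the sum $\sum_{i=1}^{2(m-h)}c_{2(m-h)}^i\int_{\hn}\frac{(\Delta^h u)^2}{r^{2i}}\,dv_{\hn}$. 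For the second term I would use the extended Hardy--Rellich inequality \eqref{yangxtended} with $\gamma=m$ and $\beta=2$ to bound $\int_{\hn}\frac{(\Delta^m u)^2}{r^2}\,dv_{\hn}$ from below by a weighted combination of $\int_{\hn}\frac{u^2}{r^{2j+2}}\,dv_{\hn}$ for $0\le j\le 2m$, whose extreme weights are $a_m$ (at $j=0$) and $b_{m,2}$ (at $j=2m$).

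Next I would handle the leftover terms $\int_{\hn}\frac{(\Delta^h u)^2}{r^{2i}}\,dv_{\hn}$ by a further application of \eqref{yangxtended}, this time with $\gamma=h$ and $\beta=2i$ for each $i$ with $1\le i\le 2(m-h)$; each such application lowers the power on $\Delta$ to zero at the cost of producing the weights $r^{-2i},\dots,r^{-(4h+2i)}$. Collecting everything, the right-hand side becomes a finite nonnegative combination of terms $\int_{\hn}\frac{u^2}{r^{2p}}\,dv_{\hn}$; the exponents $2p$ that occur range over the integers from $2$ (coming from $\frac14\cdot a_m\int\frac{u^2}{r^2}$ via the $\beta=2$, $j=0$ term) up to $4m+2$ (from the $\beta=2$, $j=2m$ term, i.e. $\frac14 b_{m,2}\int\frac{u^2}{r^{4m+2}}$). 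A monotonicity argument identical to the one in the proof of Theorem~\ref{mainhigheree} — namely that the exponent function is minimized and maximized at unique corners of the index set, and that one can walk monotonically from corner to corner — guarantees that every intermediate exponent $2,4,\dots,4m+2$ actually appears with a strictly positive coefficient, giving $2m+1$ positive constants $\delta^i$ and the claimed inequality \eqref{higherodd1}.

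Finally I would read off the leading constants. The coefficient $\delta^{2m+1}$ of $r^{-(4m+2)}$ can only come from the $\frac14\int\frac{(\Delta^m u)^2}{r^2}$ branch (the $c_{2(m-h)}^i$--branch contributes weights with exponent at most $4h+2i\le 4m$, hence never reaches $4m+2$), so $\delta^{2m+1}=\frac14\, b_{m,2}$. The coefficient $\delta^1$ of $r^{-2}$ receives two contributions: one $\frac14\, a_m$ from the $\beta=2$, $j=0$ term just mentioned, and one from the $i=1$ term of the $c_{2(m-h)}$--branch after applying \eqref{yangxtended} with $\gamma=h$, $\beta=2$, $j=0$, which gives $c_{2(m-h)}^1\cdot a_h\cdot\left(\frac{N-1}{2}\right)^2$; since $c_{2(m-h)}^1=d_{2(m-h)}$ by Theorem~\ref{mainhigher0} and $\left(\frac{N-1}{2}\right)^2=4a_1$ in the normalization $a_1=\frac{(N-1)^2}{16}$, this yields $4a_1 a_h d_{2(m-h)}$, so in total $\delta^1=4a_1 a_h d_{2(m-h)}+\frac14 a_m$, as stated. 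The main obstacle — though it is bookkeeping rather than a genuine difficulty — is verifying that no gap appears among the exponents $2,4,\dots,4m+2$ when the two nested families of weights from the $c$--branch and the $\beta=2$ branch are merged; this requires checking that the set of exponents $\{2j+2i:0\le j\le 2h,\ 1\le i\le 2(m-h)\}\cup\{2j+2:0\le j\le 2m\}$ is exactly $\{2,4,\dots,4m+2\}$, which follows since the second set alone already covers all of these. One must also track that the dimension restriction $N>4m$ needed for \eqref{global} and \eqref{yangxtended} (with the largest value $\beta+4\gamma\le 4m$) is implied by the hypothesis $N>4m+2$, so all invoked inequalities are legitimate.
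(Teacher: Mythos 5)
Your proposal is correct and follows essentially the same route as the paper: split off the gradient by applying \eqref{poincareeq} to $\Delta_{\hn}^m u$, treat $\int(\Delta_{\hn}^m u)^2$ through the even--even case and $\int\frac{(\Delta_{\hn}^m u)^2}{r^2}$ through \eqref{yangxtended} with $\gamma=m$, $\beta=2$; the only difference is that the paper quotes Theorem \ref{mainhigheree} where you re-derive it from \eqref{global} and \eqref{yangxtended}, which is immaterial since that theorem is proved exactly this way, and your leading constants $\delta^1=4a_1a_h d_{2(m-h)}+\tfrac14 a_m$ and $\delta^{2m+1}=\tfrac14 b_{m,2}$ coincide with the paper's. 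Two small bookkeeping points: the degenerate case $h=m$ (where the application of \eqref{global} with $k=2(m-h)=0$ is vacuous and $d_0=0$) deserves the explicit one-line remark the paper gives it, and the application of \eqref{yangxtended} with $\gamma=m$, $\beta=2$ requires exactly $N>4m+2$, i.e.\ the full hypothesis rather than the $N>4m$ you cite for the other applications.
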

 
 \begin{proof}
From \eqref{poincareeq} we know
 
 \begin{align*}
 \int_{\hn} |\nabla (\Delta^m u)|^2 \ dv_{\hn} & \geq \left( \frac{N-1}{2} \right)^{2} \int_{\hn} (\Delta^{m} u)^2 \ dv_{\hn} + \frac{1}{4} \int_{\hn} \frac{(\Delta^m u)^2}{r^2} \ dv_{\hn} \,.
 \end{align*} 
If $0< h< m$, from \eqref{highereven1} and \eqref{yangxtended} we readily get
 $$\int_{\hn} |\nabla (\Delta^m u)|^2 \ dv_{\hn} \geq  \left( \frac{N-1}{2} \right)^{2} \left(\left( \frac{N-1}{2} \right)^{4(m-h)} \int_{\mathbb{H}^{N}}  (\Delta_{\hn}^{h} u)^{2} \ dv_{\mathbb{H}^{N}}  +
  \sum_{i = 1}^{2m} \alpha^{i} \int_{\mathbb{H}^{N}} \frac{u^2}{r^{2i}} \ dv_{\mathbb{H}^{N}} \right)$$
  $$+ \frac{1}{4} \left(   a_{m} \int_{\hn} \frac{u^2}{r^2} \ dv_{\hn} + \sum_{j = 1}^{2m-1} a^{j}_{m,2} \int_{\hn} \frac{u^2}{r^{2j + 2}} \ dv_{\hn} 
 + b_{m,2} \int_{\hn} \frac{u^2}{r^{4m + 2}} \right)$$
by which the existence of $2m+1$ positive constants $\delta^i=\delta^i(N,m,h)$ such that \eqref{higherodd1} holds follows. Furthermore, one has
$$ \delta^1:= \left( \frac{N-1}{2} \right)^{2} a_{h} d_{2(m-h)} + \frac{1}{4} a_{m} \quad \text{and} \quad  \delta^{2m +1} := \frac{1}{4} b_{m,2}\,.$$
When $h=m$ the same proof may be adopted without applying \eqref{highereven1}. In this case, the leading terms are defined as above by assuming $d_0=0$. 
\end{proof}
 
 \par \medskip\par
 \subsection{Case $k=2m + 1$ odd and $l=2h + 1$ odd}

 \begin{thm}
Let $m,h$ be integers such that $0\leq h< m$ and $ N> 4m + 2$. There exist $2m+1$ \emph{positive} constants $\bar \delta^i=\bar\delta^i(N,m,h)$ such that the following inequality holds 
 \begin{align}\label{higherodd2}
\int_{\mathbb{H}^{N}}  |\nabla_{\hn} (\Delta_{\hn}^{m} u) |^2 \ dv_{\mathbb{H}^{N}} - \left( \frac{N-1}{2} \right)^{4(m-h)} \int_{\mathbb{H}^{N}}  | \nabla_{\hn} (\Delta_{\hn}^{h} u)|^{2} \ dv_{\mathbb{H}^{N}}  \geq 
  \sum_{i = 1}^{2m + 1}\bar \delta^{i} \int_{\mathbb{H}^{N}} \frac{u^2}{r^{2i}} \ dv_{\mathbb{H}^{N}},
\end{align}
for all $u \in C^{\infty}_{0}(\mathbb{H}^{N})$. Furthermore, the leading terms as $r\rightarrow 0$ and $r \rightarrow\infty$ are explicitly given by:\par\medskip\par

 \noindent
 if $0\leq h<m-1$
$$ \bar\delta^1:= \frac{1}{4} a_{m} + 4^{2(m-h)}\, a_{2(m-h)}\,a_{h}\,a_1+4\,d_{2(m - h -1)} a_{h + 1} a_1  
\mbox{ and } \quad \bar\delta^{2m + 1} := \frac{1}{4} b_{m, 2}, $$
if $h=m-1$ 
$$ \bar\delta^1:= \frac{1}{4} a_{m} + 4\,a_2\, a_{m - 1}  \quad \text{ and } \quad \bar \delta^{2m+1} = \frac{1}{4} b_{m,2},$$
where $a_0=1$ and, for any $\gamma$ and $\beta$ positive integers, $a_{\gamma}=\frac{(N-1)^{2\gamma}}{2^{4\gamma}}$, $b_{\gamma,\beta} = \prod_{j = 0}^{\gamma-1}\frac{(N+(\beta+4j))^2(N-(\beta+4j)-4)^2}{16}$, $d_{\gamma}$ and $e_{\gamma}$ are the constants defined in Theorem \ref{mainhigher0}.

\end{thm}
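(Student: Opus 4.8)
The plan is to mimic the structure of the three preceding subsections, reducing the odd-odd case to already-established inequalities by peeling off Laplacians from both sides of the ratio. The left-hand side is $\int_{\hn}|\nabla_{\hn}(\Delta_{\hn}^m u)|^2$ and the right-hand side involves $\int_{\hn}|\nabla_{\hn}(\Delta_{\hn}^h u)|^2$ with $0\le h<m$. First I would split according to the parity of $m-h$; since $k=2m+1$ and $l=2h+1$ are both odd, $k-l=2(m-h)$ is even, so the Poincar\'e exponent $4(m-h)$ that appears is consistent with iterating the even-order inequality \eqref{global}. The idea is to write $\Delta^m u=\Delta^{m-h-1}(\Delta^{h+1}u)$ inside $|\nabla_{\hn}(\cdot)|^2$ and apply first \eqref{poincareeq} to strip the outer gradient, producing a term $\left(\frac{N-1}{2}\right)^2\int_{\hn}(\Delta^m u)^2$ plus $\frac14\int_{\hn}\frac{(\Delta^m u)^2}{r^2}$; the first of these is then handled by \eqref{global} with $k=2(m-h)$ applied to $\Delta^h u$ after noticing $(\Delta^m u)^2=(\Delta^{m-h}(\Delta^h u))^2$, while \eqref{npoincare} is used to uncover the target term $\left(\frac{N-1}{2}\right)^{4(m-h)}\int_{\hn}|\nabla_{\hn}(\Delta^h u)|^2$ from the lower-order Poincar\'e term $\left(\frac{N-1}{2}\right)^{4(m-h-1)}\int_{\hn}(\Delta^{h+1}u)^2$. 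All remaining weighted terms $\int_{\hn}\frac{(\Delta^j u)^2}{r^{2i}}$ are then reduced to weights in $u$ alone via the extended Hardy--Rellich inequality \eqref{yangxtended} with suitable $\gamma$ and $\beta$.

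The second step is the bookkeeping: after all substitutions one obtains a sum of integrals $\int_{\hn}\frac{u^2}{r^{2i}}$ over a range of exponents $2i$, and one must verify that every exponent between $2$ and $2k=4m+2$ is hit with a positive coefficient, exactly as in the proof of Theorem \ref{mainhigheree}. I would reuse the monotonicity argument there: define $g(j,i)=2j+2i$ on the relevant index rectangle, observe it attains each integer value in its range (because along the boundary edges it increases by steps of $2$ and the minimum and maximum are $2$ and $4m+2$), and conclude that grouping terms by common exponent produces $2m+1$ strictly positive constants $\bar\delta^i$. The bottom exponent $r^{-2}$ receives contributions from three sources — the $\frac14\int\frac{(\Delta^m u)^2}{r^2}$ piece processed by \eqref{yangxtended} (contributing $\frac14 a_m$), the $\left(\frac{N-1}{2}\right)^2$-multiple of the leading $r^{-2}$ term from \eqref{global} further expanded by \eqref{yangxtended} (contributing a product involving $a_1$, $a_h$ and the $d$-constants), and the leading $r^{-2}$ term coming from \eqref{npoincare}-then-\eqref{global}. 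Matching these against the claimed $\bar\delta^1$ is a direct computation once one writes $\left(\frac{N-1}{2}\right)^2=4a_1$ and $\left(\frac{N-1}{2}\right)^{4(m-h)}=4^{2(m-h)}a_{2(m-h)}$. The top exponent $r^{-(4m+2)}$ comes uniquely from iterating the $b$-factor in \eqref{yangxtended} through the $\frac14\int\frac{(\Delta^m u)^2}{r^2}$ term, giving $\bar\delta^{2m+1}=\frac14 b_{m,2}$ as stated.

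The boundary case $h=m-1$ is handled separately because then $2(m-h-1)=0$ and the first application of \eqref{global} is vacuous; here one applies \eqref{poincareeq} to strip the outer gradient, then \eqref{npoincare} directly to the resulting $\left(\frac{N-1}{2}\right)^2\int_{\hn}(\Delta^m u)^2=\left(\frac{N-1}{2}\right)^2\int_{\hn}(\Delta(\Delta^{m-1}u))^2$, and finally \eqref{yangxtended} with $\gamma=m-1,m$ and $\beta=2,4$ to convert the weighted Laplacian terms. This yields the leading constant $\bar\delta^1=\frac14 a_m+4a_2 a_{m-1}$, the extra term reflecting that the $r^{-2}$ weight now picks up the $\left(\frac{N-1}{2}\right)^2\cdot\frac{(N-1)^2}{16}\cdot a_{m-1}$ contribution from \eqref{npoincare} composed with the $a_{m-1}$ factor of \eqref{yangxtended} at $\beta=2$, which equals $4a_2a_{m-1}$ in the $a_\gamma$ notation; the case $h=0$ is absorbed by the convention $a_0=1$ (and the corresponding $d_0=0$). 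Throughout, the dimensional restriction $N>4m+2$ is exactly what is needed for every invocation of \eqref{yangxtended} (which requires $\beta<N-4\gamma$ with $\gamma\le m$ and $\beta\le 4m$) and of \eqref{npoincare} and \eqref{global}, so no sharper hypothesis is required.

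\begin{proof}
Let $0<h<m-1$. Writing $\Delta^m u=\Delta^{m-h-1}(\Delta^{h+1}u)$ and applying first \eqref{poincareeq}, then \eqref{global} with $k=2(m-h)$ to $(\Delta^m u)^2=(\Delta^{m-h}(\Delta^h u))^2$, then \eqref{npoincare} to uncover the term $\left(\frac{N-1}{2}\right)^{4(m-h)}\int_{\hn}|\nabla_{\hn}(\Delta^h u)|^2$, and finally \eqref{yangxtended} with $\gamma=h,m$ and $\beta$ ranging over the even integers $2,\dots,4(m-h)$, we obtain
$$\int_{\hn}|\nabla(\Delta^m u)|^2\,dv_{\hn}-\left(\frac{N-1}{2}\right)^{4(m-h)}\int_{\hn}|\nabla(\Delta^h u)|^2\,dv_{\hn}\ge\sum_{i=1}^{2m+1}\bar\delta^i\int_{\hn}\frac{u^2}{r^{2i}}\,dv_{\hn}$$
with all $\bar\delta^i>0$: indeed, arguing exactly as in the proof of Theorem \ref{mainhigheree} via the auxiliary function $g(j,i)=2j+2i$, one checks that every exponent $2i$ with $1\le i\le 2m+1$ is attained with a strictly positive coefficient. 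Collecting the contributions to the extremal exponents $r^{-2}$ and $r^{-(4m+2)}$ and using $\left(\frac{N-1}{2}\right)^2=4a_1$, $\left(\frac{N-1}{2}\right)^{4(m-h)}=4^{2(m-h)}a_{2(m-h)}$, one finds $\bar\delta^1=\frac14 a_m+4^{2(m-h)}a_{2(m-h)}a_h a_1+4 d_{2(m-h-1)}a_{h+1}a_1$ and $\bar\delta^{2m+1}=\frac14 b_{m,2}$. The case $h=0$ follows with the same computations upon setting $a_0=1$. When $h=m-1$ the step invoking \eqref{global} is omitted; applying \eqref{poincareeq}, then \eqref{npoincare} to $(\Delta^m u)^2=(\Delta(\Delta^{m-1}u))^2$, then \eqref{yangxtended} with $\gamma=m-1,m$ and $\beta=2,4$, one obtains \eqref{higherodd2} with $\bar\delta^1=\frac14 a_m+4 a_2 a_{m-1}$ and $\bar\delta^{2m+1}=\frac14 b_{m,2}$. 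In all cases the hypothesis $N>4m+2$ guarantees the applicability of \eqref{yangxtended}, \eqref{npoincare} and \eqref{global}.
\end{proof}
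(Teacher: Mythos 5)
Your strategy is, in essence, the paper's own: strip the outer gradient with \eqref{poincareeq}, send the resulting term $\tfrac14\int_{\hn}\frac{(\Delta^m u)^2}{r^2}$ through \eqref{yangxtended} with $\gamma=m$, $\beta=2$ (which is exactly where $\tfrac14 a_m$ and $\tfrac14 b_{m,2}$ come from), and convert $\left(\frac{N-1}{2}\right)^2\int_{\hn}(\Delta^m u)^2$ into $\left(\frac{N-1}{2}\right)^{4(m-h)}\int_{\hn}|\nabla(\Delta^h u)|^2$ plus weighted remainders. The only organizational difference is that the paper performs this last conversion by simply invoking the already-proved even--odd case \eqref{highereven2} of Theorem \ref{mainhighereo}, so that $\bar\delta^1=\left(\frac{N-1}{2}\right)^2\bar\alpha^1+\tfrac14 a_m$ and $\bar\delta^{2m+1}=\tfrac14 b_{m,2}$ drop out immediately, whereas you re-derive that case inline.

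In that inline re-derivation there is a concrete slip. You state that you apply \eqref{global} with $k=2(m-h)$ to $(\Delta^m u)^2=(\Delta^{m-h}(\Delta^h u))^2$ and then use \eqref{npoincare} to ``uncover'' $\left(\frac{N-1}{2}\right)^{4(m-h)}\int_{\hn}|\nabla(\Delta^h u)|^2$. As written this cannot work: after that application of \eqref{global} the surviving quantities are $\int_{\hn}(\Delta^h u)^2$ and $\int_{\hn}\frac{(\Delta^h u)^2}{r^{2i}}$, and nothing bounds $\int_{\hn}(\Delta^h u)^2$ from below by $\int_{\hn}|\nabla(\Delta^h u)|^2$ --- the Poincar\'e inequality goes the other way. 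The correct chain, which your plan in fact gestures at when you mention the intermediate term $\left(\frac{N-1}{2}\right)^{4(m-h-1)}\int_{\hn}(\Delta^{h+1}u)^2$, is to apply \eqref{global} with $k=2(m-h-1)$ to $\Delta^{m-h-1}(\Delta^{h+1}u)$ and then \eqref{npoincare} to $\Delta^h u$; this is precisely the paper's proof of Theorem \ref{mainhighereo}, so with that index corrected (and $h=0$, $h=m-1$ treated as you indicate) your argument coincides with the paper's. Note also that you never actually display the bookkeeping that produces $\bar\delta^1$: the cleanest way to land exactly on the stated constants is the paper's, namely quoting \eqref{highereven2} together with its constant $\bar\alpha^1$ and multiplying by $\left(\frac{N-1}{2}\right)^2=4a_1$, rather than re-tracking all the factors through the inlined chain.
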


 \begin{proof}
From \eqref{poincareeq} we know 
 
 \begin{align*}
 \int_{\hn} |\nabla (\Delta^m u)|^2 \ dv_{\hn} & \geq \left( \frac{N-1}{2} \right)^{2} \int_{\hn} (\Delta^{m} u)^2 \ dv_{\hn} + \frac{1}{4} \int_{\hn} \frac{(\Delta^m u)^2}{r^2} \ dv_{\hn} \,.
 \end{align*} 
Now, by applying  \eqref{highereven2} and \eqref{yangxtended} we deduce 
$$ \int_{\hn} |\nabla (\Delta^m u)|^2 \ dv_{\hn}  \geq  \left( \frac{N-1}{2} \right)^{2}\left(\left( \frac{N-1}{2} \right)^{4(m-h)-2} \int_{\mathbb{H}^{N}}  |\nabla \Delta_{\hn}^{h} u|^{2} \ dv_{\mathbb{H}^{N}} +
  \sum_{i = 1}^{2m}\bar \alpha^{i} \int_{\mathbb{H}^{N}} \frac{u^2}{r^{2i}} \ dv_{\mathbb{H}^{N}} \right)$$
$$+ \frac{1}{4} \left( a_{m} \int_{\hn} \frac{u^2}{r^2} \ dv_{\hn} + \sum_{j = 1}^{2m -1} a^{j}_{m, 2} \int_{\hn} \frac{u^2}{r^{2j + 2}} \ dv_{\hn} + b_{m, 2} \int_{\hn} \frac{u^2}{r^{4m + 2}} \right)\,,$$
 by which the existence of $2m+1$ positive constants $\bar \delta^i=\delta^i(N,m,h)$ such that \eqref{higherodd2} holds follows. Furthermore, one has
$$\bar \delta^1:=\left( \frac{N-1}{2} \right)^{2}\bar \alpha_1+\frac{1}{4}  a_{m}  \quad \text{and} \quad  \bar\delta^{2m +1} :=\frac{1}{4} b_{m, 2} $$
 and the thesis follows by recalling the definition of $\bar \alpha_1$ in Theorem \ref{mainhighereo} for $h=m-1$ and $h\neq m-1$.
 \end{proof}

 \section{Proof of Theorem \ref{PRHinequality} and Corollary \ref{cor2}}\label{proof21}
 This section is devoted to the proofs of Theorem \ref{PRHinequality} and Corollary \ref{cor2}. The proof of Theorem \ref{PRHinequality} mainly relies on the transformation 
$u \rightarrow (\sinh r)^{\frac{(N-1)}{2}} u$, which uncovers the Poincar\'e term, and spherical harmonics technique. Before entering the proof we recall some facts on spherical harmonics.\par \medskip\par 
The Laplace-Beltrami operator on hyperbolic space in spherical coordinates is given by
\[
 \Delta_{\hn} = \frac{\partial^2}{\partial r^2} + (N-1)\, \coth r
\frac{\partial}{\partial r} + \frac{1}{\sinh ^2 r} \Delta_{\mathbb{S}^{N-1}},
\]
where $\Delta_{\mathbb{S}^{N-1}}$ is the Laplace-Beltrami operator on the unit sphere $\mathbb{S}^{N-1}.$
If we write $u(x) = u(r,\sigma) \in C_c^{\infty}(\hn),$ $r \in [0, \infty), \sigma \in \mathbb{S}^{N-1},$ then
by \cite[Ch.4, Lemma 2.18]{ES}  we have that
\[
 u(x): = u(r, \sigma) = \sum_{n= 0}^{\infty} d_{n}(r) P_{n}(\sigma)
\]
in $L^2({\hn}),$ where $\{ P_{n}\}$ is a complete orthonormal system of spherical harmonics and
\[
 d_{n}(r) = \int_{\mathbb{S}^{N-1}} u(r, \sigma) P_{n}(\sigma) \ d\sigma.
\]
We note that the spherical harmonic $P_{n}$ of order $n$ is the restriction to $\mathbb{S}^{N-1}$ of a homogeneous harmonic
polynomial of degree $n.$ Now we recall the following

\begin{lem}\label{arm} \cite[Lemma 2.1]{MSS}
 Let $P_{n}$ be a spherical harmonic of order $n$ on $\mathbb{S}^{N-1}.$ Then for every $n \in \mathbb{N}_{0}$

\[
 \Delta_{\mathbb{S}^{N-1}} P_{n} = -(n^2 + (N-2)n)P_{n}.
\]
The values $\lambda_{n} := n^2 + (N-2)n$ are the eigenvalues of the Laplace-Beltrami operator $- \Delta_{\mathbb{S}^{N-1}}$
on $\mathbb{S}^{N-1}$ and enjoy the property $\lambda_{n} \geq 0$ and $\lambda_{0} = 0.$
The corresponding eigenspace consists of all the spherical harmonics of order $n$ and has dimension $d_{n}$ where $d_{0} = 1,$
$d_{1} = N$ and
$$
 d_{n} = \left(\begin{array}{c}
N+n-1\\
n
\end{array} \right) - \left(\begin{array}{c}
N+n-3\\
n-2\
\end{array}\right),
$$
for $n\geq 2.$
\end{lem}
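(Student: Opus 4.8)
The plan is to transfer the entire statement to Euclidean space $\rn$, exploiting the defining property of a spherical harmonic as the restriction to $\mathbb{S}^{N-1}$ of a homogeneous harmonic polynomial, together with the polar decomposition of the Euclidean Laplacian. The elementary computation lives on $\rn$, while the structural (dimension and eigenspace) statements reduce to standard facts about homogeneous polynomials.

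\emph{Eigenvalue identity.} First I would fix a homogeneous harmonic polynomial $H_n$ of degree $n$ on $\rn$ with $P_n=H_n|_{\mathbb{S}^{N-1}}$, so that $H_n(x)=r^n P_n(\sigma)$ in polar coordinates $x=r\sigma$, $r=|x|$, $\sigma\in\mathbb{S}^{N-1}$. Writing the Euclidean Laplacian as
\[
\Delta_{\rn}=\frac{\partial^2}{\partial r^2}+\frac{N-1}{r}\frac{\partial}{\partial r}+\frac{1}{r^2}\Delta_{\mathbb{S}^{N-1}},
\]
and applying it to $H_n=r^nP_n$, the radial derivatives produce $\bigl[n(n-1)+n(N-1)\bigr]r^{n-2}P_n=n(n+N-2)\,r^{n-2}P_n$, while the angular part gives $r^{n-2}\Delta_{\mathbb{S}^{N-1}}P_n$. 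Since $\Delta_{\rn}H_n=0$, dividing by $r^{n-2}$ yields $\Delta_{\mathbb{S}^{N-1}}P_n=-(n^2+(N-2)n)P_n$, the claimed identity with $\la_n=n(n+N-2)$. The facts $\la_n\ge 0$ and $\la_0=0$ are then immediate, and since $n\mapsto n(n+N-2)$ is strictly increasing on the nonnegative integers when $N\ge 2$, distinct orders produce distinct eigenvalues.

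\emph{Dimension count.} Let $\mathcal{P}_n$ denote the homogeneous polynomials of degree $n$ in $N$ variables, so $\dim\mathcal{P}_n=\binom{N+n-1}{n}$, and let $\mathcal{H}_n\subset\mathcal{P}_n$ be the harmonic ones. Since $\Delta_{\rn}:\mathcal{P}_n\to\mathcal{P}_{n-2}$ is surjective (a standard fact, obtainable from the nondegeneracy of the Fischer pairing induced by $\Delta$ on monomials), the rank–nullity theorem gives
\[
\dim\mathcal{H}_n=\dim\mathcal{P}_n-\dim\mathcal{P}_{n-2}=\binom{N+n-1}{n}-\binom{N+n-3}{n-2},
\]
which is exactly $d_n$; the cases $n=0,1$ reduce to $d_0=1$ and $d_1=N$ because $\mathcal{P}_{-1}=\mathcal{P}_{-2}=\{0\}$. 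As a homogeneous polynomial is recovered from its restriction to $\mathbb{S}^{N-1}$ through $H_n(x)=r^nP_n(x/r)$, the restriction map $\mathcal{H}_n\to C^\infty(\mathbb{S}^{N-1})$ is injective, so the space of order-$n$ spherical harmonics has dimension $d_n$.

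\emph{Exhaustion of the eigenspace.} The remaining, and genuinely delicate, point is that the $\la_n$-eigenspace of $-\Delta_{\mathbb{S}^{N-1}}$ contains nothing beyond the order-$n$ spherical harmonics. For this I would invoke the Fischer decomposition $\mathcal{P}_n=\bigoplus_{j\ge 0}|x|^{2j}\mathcal{H}_{n-2j}$, which upon restriction to $\mathbb{S}^{N-1}$ shows that every polynomial on the sphere is a finite sum of spherical harmonics; by Stone–Weierstrass these are dense in $L^2(\mathbb{S}^{N-1})$. Combined with the orthogonality of eigenspaces attached to the distinct eigenvalues $\la_n$ obtained above, this forces the $L^2$ spectral decomposition of $-\Delta_{\mathbb{S}^{N-1}}$ to be precisely $\bigoplus_n \mathcal{H}_n|_{\mathbb{S}^{N-1}}$, so each eigenspace coincides with the order-$n$ spherical harmonics and has dimension $d_n$. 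I expect this last step, pinning down that no extra eigenfunctions appear, to be the main obstacle, since it relies on the completeness of the spherical harmonic system rather than on the elementary computations of the first two steps.
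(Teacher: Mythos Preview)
Your argument is correct and self-contained. Note, however, that the paper does not actually prove this lemma: it is stated with a citation to \cite[Lemma~2.1]{MSS} and used as a black box, so there is no ``paper's own proof'' to compare against. What you have written is the standard derivation (polar decomposition of the Euclidean Laplacian for the eigenvalue, surjectivity of $\Delta:\mathcal{P}_n\to\mathcal{P}_{n-2}$ for the dimension, Fischer decomposition plus Stone--Weierstrass for completeness), and it is more than adequate for the purposes of the paper, which only uses the eigenvalue identity and the $L^2$ expansion $u(r,\sigma)=\sum_n d_n(r)P_n(\sigma)$. Your caution about the ``exhaustion'' step is well placed in principle, but since the paper treats the entire lemma as a quotation from the literature, no additional justification is expected here.
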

From Lemma \ref{arm} it is easy to see that
\begin{equation*}
 \Delta_{\hn} u(r, \sigma) = \sum_{n= 0}^{\infty} \left( d_{n}^{\prime \prime}(r) +
(N-1) \coth r d_{n} (r) - \frac{\lambda_{n} d_{n}(r)}{\sinh^2 r} \right) P_{n}(\sigma).
\end{equation*}

In the sequel we will also exploit the following 1-dimensional Hardy-type inequality from \cite{BGG}:

\begin{lem}\label{hardytype}

For all $ u \in C^{\infty}_c(0,\infty)$ there holds

\begin{equation*}
 \int_{0}^{\infty} \frac{u^{\prime 2}}{\sinh^2 r} \ dr \geq \frac{9}{4} \int_{0}^{\infty} \frac{u^2}{\sinh^4 r} \ dr
+ \int_{0}^{\infty} \frac{u^2}{\sinh^2 r} \ dr.
\end{equation*}

\end{lem}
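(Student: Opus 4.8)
\noindent\textbf{Proof proposal for Lemma \ref{hardytype}.}
The idea is to strip off the two hyperbolic weights on the right-hand side by the one-dimensional ground state substitution $u=(\sinh r)^{3/2}v$, and then to recover the missing part of the coefficient of $\int u^2/\sinh^2 r$ from an auxiliary Poincar\'e-type inequality which is essentially the one of $\mathbb{H}^2$.

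First I would put $u=(\sinh r)^{3/2}v$; since $u\in C^\infty_c(0,\infty)$ the function $v$ lies in $C^\infty_c(0,\infty)$ as well. Writing $u'=\tfrac32(\sinh r)^{1/2}\cosh r\,v+(\sinh r)^{3/2}v'$, squaring and dividing by $\sinh^2 r$, the cross term contributes $3\int_0^\infty\cosh r\,vv'\,dr=-\tfrac32\int_0^\infty\sinh r\,v^2\,dr$ after an integration by parts with vanishing boundary terms. Using $\cosh^2 r=1+\sinh^2 r$, and then reverting to $u$ via $v^2/\sinh r=u^2/\sinh^4 r$ and $\sinh r\,v^2=u^2/\sinh^2 r$, one gets the exact identity
\[
\int_0^\infty\frac{(u')^2}{\sinh^2 r}\,dr=\frac94\int_0^\infty\frac{u^2}{\sinh^4 r}\,dr+\frac34\int_0^\infty\frac{u^2}{\sinh^2 r}\,dr+\int_0^\infty\sinh r\,(v')^2\,dr.
\]

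It then remains to prove the weighted inequality
\[
\int_0^\infty\sinh r\,(v')^2\,dr\ \ge\ \frac14\int_0^\infty\sinh r\,v^2\,dr,\qquad v\in C^\infty_c(0,\infty),
\]
which is the radial version of the Poincar\'e inequality on $\mathbb{H}^2$. I would obtain it by a second ground state argument: the explicit function $\phi(r)=e^{-r/2}$ satisfies $-(\sinh r\,\phi')'-\tfrac14\sinh r\,\phi=\tfrac12 e^{-3r/2}>0$, so, setting $v=\phi z$ with $z\in C^\infty_c(0,\infty)$, expanding $(v')^2$ and integrating the cross term by parts yields
\[
\int_0^\infty\sinh r\,(v')^2\,dr=\int_0^\infty\sinh r\,\phi^2(z')^2\,dr+\int_0^\infty\frac{-(\sinh r\,\phi')'}{\phi}\,v^2\,dr\ \ge\ \frac14\int_0^\infty\sinh r\,v^2\,dr.
\]
Substituting this into the previous identity, using once more $\sinh r\,v^2=u^2/\sinh^2 r$ and $\tfrac34+\tfrac14=1$, produces exactly the asserted inequality.

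All steps are elementary; the only genuinely creative point is recognizing that the natural exponent $3/2$ in the substitution leaves behind merely the coefficient $3/4$ in front of $\int u^2/\sinh^2 r$, so that the statement is equivalent to the $\mathbb{H}^2$ Poincar\'e inequality supplying the remaining $1/4$. The one (minor) thing to verify carefully is that the compact support of $u$ in the \emph{open} interval $(0,\infty)$ is inherited by $v$ and $z$, so that every boundary contribution in the integrations by parts vanishes; this is the only place where the hypothesis $u\in C^\infty_c(0,\infty)$ is actually used.
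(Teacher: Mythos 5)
Your proof is correct. Note that the paper itself does not prove Lemma \ref{hardytype}: it is imported from the reference \cite{BGG} without proof, so there is no internal argument to compare against, and your derivation serves as a complete, self-contained substitute. Both steps check out: the substitution $u=(\sinh r)^{3/2}v$ yields exactly the identity you claim, since the cross term $3\int_0^\infty \cosh r\,v v'\,dr$ integrates by parts to $-\tfrac32\int_0^\infty\sinh r\,v^2\,dr$ and the splitting $\cosh^2 r=1+\sinh^2 r$ leaves the coefficient $\tfrac94-\tfrac32=\tfrac34$ in front of $\int_0^\infty\sinh r\,v^2\,dr=\int_0^\infty u^2/\sinh^2 r\,dr$; and the residual weighted Poincar\'e inequality $\int_0^\infty\sinh r\,(v')^2\,dr\ge\tfrac14\int_0^\infty\sinh r\,v^2\,dr$ follows from your supersolution computation, which is accurate: for $\phi=e^{-r/2}$ one has $-(\sinh r\,\phi')'-\tfrac14\sinh r\,\phi=\tfrac12 e^{-3r/2}>0$, and in the ground-state expansion all boundary terms vanish because $z=v/\phi$ inherits the compact support of $u$ in the open half-line. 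Methodologically your argument is in the same spirit as the transformations used in the body of the paper (a ground-state substitution that ``uncovers'' the sharp Poincar\'e constant, here the constant $\tfrac14$ of radial $\mathbb{H}^2$), so it integrates naturally with the rest of the exposition.
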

\par \bigskip\par
{\bf{Proof of Theorem \ref{PRHinequality}.}}\par \medskip\par
The proof is divided in several steps. \par \medskip\par
{\bf{Step 1.}} For $u \in C_{c}^{\infty}(\mathbb{H}^{N})$ we define  
\[
v(x) = (\sinh r)^{\frac{N-1}{2}} u(x), \ \mbox{where}  \ r = \rho(x,x_0)
\]
Then the following relation holds for $x = (r, \sigma) \in (0, \infty) \times \mathbb{S}^{N-1},$
\begin{equation}\label{ph1}
|\nabla_{\mathbb{H}^{N}} u|^2 = (\sinh r)^{-(N-1)} \left( |\nabla_{\mathbb{H}^{N}} v|^2 +
\frac{(N-1)^2}{4} \coth^2 r v^2  - (N-1) \coth r v \frac{\partial v}{\partial r} \right).
\end{equation}
\\

{\bf{Proof of Step 1.}} 
 A straightforward computation gives
\begin{align*}
|\nabla_{\mathbb{H}^{N}} v|^2& = \left(\frac{\partial v}{\partial r}\right)^2+\frac{1}{\sinh^2 r}|\nabla_{\mathbb{S}^{N}}v|^2
\notag\\
& = \frac{(N-1)^2}{4} (\sinh r)^{N-1} \coth^2 r u^2 + 
|\nabla_{\mathbb{H}^{N}} u|^2 (\sinh r)^{N-1} \notag \\
& + (N-1) (\sinh r)^{N-1} \coth r (\sinh r)^{\frac{-(N-1)}{2}} v \frac{\partial u}{\partial r}  \notag \\
& =  \frac{(N-1)^2}{4}  \coth^2 r v^2 + 
|\nabla_{\mathbb{H}^{N}} u|^2 (\sinh r)^{N-1} - \frac{(N-1)^2}{2} \coth^2 r v^2 \notag \\
& + (N-1) \coth r v \frac{\partial v}{\partial r}.
\end{align*}
Now by rearranging the terms above we conclude the proof of Step 1. \par \medskip\par
{\bf{Step 2.}}
In this step we compute,  
\begin{equation*}\label{transformation}
 \Delta_{\hn} v = \left( \frac{\partial^2}{\partial r^2} + (N-1) \coth r \frac{\partial}{\partial r} +
\frac{1}{\sinh^2} \Delta_{\mathbb{S}^{N-1}}\right) (\sinh r)^{\frac{N-1}{2}} u.
\end{equation*}

\begin{align*}
  \Delta_{\hn} v& = \frac{(N-1)(N-3)}{4} (\sinh r)^{\frac{N-1}{2}} \coth^2 r \ u + (N-1) (\sinh r)^{\frac{N-3}{2}} \cosh r
\frac{\partial u}{\partial r} \\
& + \frac{(N-1)}{2} (\sinh r)^{\frac{N-1}{2}} u + (\sinh r)^{\frac{N-1}{2}} \frac{\partial^2 u}{\partial r^2} \\
& + \frac{(N-1)^2}{2} \coth^2 r (\sinh r)^\frac{N-1}{2} u + (N-1) \coth r (\sinh r)^{\frac{N-1}{2}} \frac{\partial u}{\partial r} \\
& + (\sinh r)^{\frac{N-1}{2}} \frac{1}{\sinh^2 r} \Delta_{\mathbb{S}^{N-1}} u\\
& = (\sinh r)^{\frac{N-1}{2}} \left[ \frac{\partial^2 u}{\partial r^2} + (N-1) \coth r \frac{\partial u}{\partial r}
+ \frac{1}{\sinh^2 r} \Delta_{\mathbb{S}^{N-1}} u \right] \\
& + \left[ \frac{(N-1)(N-3)}{4} + \frac{(N-1)^2}{2} \right] \coth^2 r (\sinh r)^{\frac{N-1}{2}} u +
\frac{(N-1)}{2} (\sinh r)^{\frac{N-1}{2}} u \\
&+ (N-1) (\sinh r)^{\frac{N-3}{2}} \cosh r \left[ \frac{\partial}{\partial r} ((\sinh r)^{- \frac{(N-1)}{2}} v) \right]\\
& = (\sinh r)^{\frac{N-1}{2}} (\Delta_{\hn} u) +  \left[ \frac{(N-1)(N-3)}{4} + \frac{(N-1)^2}{2} \right] \coth^2 r v \\
& + \frac{(N-1)}{2} v - \frac{(N-1)^2}{2} \coth^2 r v + (N-1) \coth r \frac{\partial v}{\partial r} \\
& = (\sinh r)^{\frac{N-1}{2}} (\Delta_{\hn} u) + \frac{(N-1)(N-3)}{4} \coth^2 r v + \frac{(N-1)}{2} v + (N-1) \coth r
\frac{\partial v}{\partial r}\,.
\end{align*}
Hence, we get
\begin{equation}
 \begin{aligned}\label{trans1}
  \Delta_{\hn} u  &= \frac1{(\sinh r)^{\frac{(N-1)}{2}}} \left[ \Delta_{\hn} v -
 \left( \frac{(N-1)(N-3)}{4} \coth^2 r + \frac{(N-1)}{2} \right) v \right.  \left. -(N-1)\coth r \frac{\partial v}{\partial r} \right] \\
&= \frac1{(\sinh r)^{\frac{(N-1)}{2}}} \left[ \frac{\partial^2 v}{\partial r^2} - \left( \frac{(N-1)(N-3)}{4} \coth^2 r
+ \frac{(N-1)}{2} \right) v \right. \left. + \frac{1}{\sinh^2 r} \Delta_{\mathbb{S}^{N-1}} v \right].
 \end{aligned}
 \end{equation}

\par \bigskip\par
{\bf{ Step 3.}} Expanding $v$ in spherical harmonics 
\[
v(x) : = v(r, \sigma ) =  \sum_{n=0}^{\infty} d_{n}(r) P_{n}(\sigma)
\] 
and observing that 
\[
\int_{\mathbb{H}^{N}} (\sinh r)^{-(N-1)}  |\nabla_{\mathbb{H}^{N}} v|^2 \ dv_{\mathbb{H}^{N}} = 
\sum_{ n = 0}^{\infty} \int_{0}^{\infty}  \left( (d_{n}^{\prime}(r))^2 + \lambda_{n} \frac{d_{n}^2(r)}{\sinh^2 r} \right) \ dr,
\]
and substituting in \eqref{ph1}, we infer 
\begin{align}\label{ph4}
\int_{\mathbb{H}^{N}}  |\nabla_{\mathbb{H}^{N}} u|^2 \ dv_{\mathbb{H}^{N}} 
 = &\notag \\\sum_{n = 0 }^{\infty}   \int_{0}^{\infty}  &  \left( (d_{n}^{\prime}(r))^2 + 
\lambda_{n} \frac{d_{n}^2(r)}{\sinh^2 r} + \frac{(N-1)^2}{4} \coth^2 r d_{n}^2(r)- (N-1) \coth r d_{n}(r) d_{n}^{\prime}(r)  \right)  dr.
\end{align}

Further expanding in spherical harmonics and putting this in \eqref{trans1}, we have
\begin{align}
 \int_{\mathbb{H}^{N}}  |\Delta_{\hn} u|^2  \ dv_{\mathbb{H}^{N}} 
 =& \notag \\ \sum_{n=0}^{\infty} \int_{0}^{\infty} &\left( d_{n}^{\prime \prime}(r) - \frac{(N-1)(N-3)}{4} \coth^2 r d_{n}(r) - \frac{(N-1)}{2} d_{n}(r) - \frac{\lambda_{n}}{\sinh^2 r} d_{n}(r) \right)^2 dr, 
\end{align}
where the eigenvalues $\lambda_{n}$ are repeated according to their multiplicity. Let us write using \eqref{ph1} and \eqref{ph4}, 

\begin{align*}
& \int_{\mathbb{H}^{N}} (\Delta_{\mathbb{H}^{N}} u)^2 \ dv_{\mathbb{H}^{N}} - \left( \frac{N-1}{2} \right)^2 \int_{\mathbb{H}^{N}} |\nabla_{\mathbb{H}^{N}} u|^2 
\ dv_{\mathbb{H}^{N}}   = \sum_{n=0}^{\infty} \left[ \int_{0}^{\infty}   \left( d_{n}^{\prime \prime}(r) - \frac{(N-1)(N-3)}{4} \coth^2 r d_{n}(r) \right. \right. \notag \\
&  \left. \left. - \frac{(N-1)}{2} d_{n}(r) - \frac{\lambda_{n}}{\sinh^2 r} d_{n}(r) \right)^2 \ dr - \left( \frac{N-1}{2} \right)^2    \int_{0}^{\infty} 
  \left( (d_{n}^{\prime}(r))^2 + \lambda_{n} \frac{d_{n}^2(r)}{\sinh^2 r} + \frac{(N-1)^2}{4} \coth^2 r d_{n}^2(r)  \right. \right. \notag \\
&  \left. \left.   - (N-1) \coth r d_{n}(r) d_{n}^{\prime}(r)  \right) \right]  \  dr.
\end{align*}

 Considering each term separately and simplifying further, for detail see the proof of \cite[Theorem 3.1]{BGG}), we get

\begin{align}\notag \label{estimate1}
&\int_{0}^{\infty} \left( d_{n}^{\prime \prime}(r) - \frac{(N-1)(N-3)}{4} \coth^2 r d_{n}(r)
 - \frac{(N-1)}{2} d_{n}(r) - \frac{\lambda_{n}}{\sinh^2 r} d_{n}(r) \right)^2 \ dr \\ \notag 
 & = \int_{0}^{\infty} (d_{n}^{\prime \prime}(r))^2 \ dr + \frac{(N-1)^2}{2}  \int_{0}^{\infty} (d_{n}^{\prime}(r))^2 \ dr  + \left( \frac{(N-1)(N-3)}{2} +
2 \lambda_{n} \right)  \int_{0}^{\infty} \frac{1}{\sinh^ 2 r}  (d_{n}^{\prime}(r))^2 \ dr   \notag \\
& \frac{(N-1)^4}{16} \int_{0}^{\infty} (d_{n}(r))^2 \ dr + \left( \lambda_{n}^2 + \frac{(N-1)(N-3)}{2} \lambda_{n} - 6 \lambda_{n} + \frac{(N-1)^2 (N-3)^2}{16} \right. \notag \\
& \left. - \frac{3}{2} (N-1)(N-3) \right) \int_{0}^{\infty} \frac{1}{\sinh^4 r} (d_{n}(r))^2 \ dr  + \left( \frac{(N-1)^2(N-3)^2}{8} + \frac{(N-1)^2(N-3)}{4}  \right. \notag \\
& \left. + \frac{(N-1)(N-3)}{2} \lambda_{n} + (N-5) \lambda_{n} - (N-1)(N-3) \right) \int_{0}^{\infty} \frac{1}{\sinh^2 r} (d_{n}(r))^2 \ dr \notag \\
\end{align}
and 

\begin{align}\label{estimate2}
  \left(\frac{N-1}{2} \right)^2 \int_{\mathbb{H}^{N}} |\nabla_{\mathbb{H}^{N}} u|^2 \ dv_{\mathbb{H}^{N}} & =   \left( \frac{N-1}{2} \right)^2 \int_{0}^{\infty} (d_{n}^{\prime}(r))^2 \ dr -  \frac{(N-1)^4}{16}  \int_{0}^{\infty} (d_{n}(r))^2 \ dr \notag \\
&  - \frac{(N-1)^3 (N - 3)}{16} \int_{0}^{\infty}
 \frac{1}{\sinh^2 r} (d_{n}(r))^2 \ dr.
\end{align}
By \eqref{estimate1} and \eqref{estimate2}, using Lemma \ref{hardytype}, the 1-dimensional Hardy inequality:
\begin{equation*}
\int_{0}^{\infty}  (d_{n}^{\prime}(r))^2 \ dr \geq \frac{1}{4} \int_{0}^{\infty} \frac{d_{n}^{2}(r)}{r^2} \ dr
\end{equation*}
and the 1-dimensional Rellich inequality:
\[
 \int_{0}^{\infty} (d_{n}^{\prime \prime}(r))^2 \ dr \geq \frac{9}{16} \int_{0}^{\infty} \frac{d_{n}^2 (r)}{r^4} \ dr ,
\] 
we obtain
\begin{align*}
& \int_{\mathbb{H}^{N}} (\Delta_{\mathbb{H}^{N}} u)^2 \ dv_{\mathbb{H}^{N}} - \left( \frac{N-1}{2} \right)^2 \int_{\mathbb{H}^{N}} |\nabla_{\mathbb{H}^{N}} u|^2 
\ dv_{\mathbb{H}^{N}}  \notag  \\
& \geq \frac{9}{16} \int_{0}^{\infty} \frac{d_{n}^{2}(r)}{r^4} \ dr + \frac{(N-1)^2}{16} \int_{0}^{\infty} \frac{d_{n}^{2}(r)}{r^2 } \ dr + A_{N} \int_{0}^{\infty} \frac{d_{n}^{2}(r)}{\sinh^4 r} \ dr +
 B_{N} \int_{0}^{\infty} \frac{d_{n}^{2}(r)}{\sinh^4 r} \ dr,
\end{align*}
where
\[
 A_{n} = \left[ \lambda_{n}^2 +  \frac{N(N-4)}{2} \lambda_{n} +
\frac{((N-1)(N-3))^2}{16} - \frac{3}{8} (N-1)(N-3) \right]
\]
and
\[
B_{n} = \left[   \frac{(N^2 - 2N - 5)}{4} \lambda_{n} + \frac{(N-1)^2(N-3)}{4} + \frac{(N-1)^2(N-3)(N-5)}{16} - \frac{(N-1)(N-3)}{2} \right].
\]
We note that
\[
\min_{n \in \mathbb{N}_{0}} A_{n}=\frac{(N-1)(N-3)(N^2-4N-3)}{16} \quad \mbox{and} \quad \min_{n \in \mathbb{N}_{0}} B_{n}= \frac{(N-1)(N-3)(N^2 - 2N - 7)}{16}
\]
for $N \geq 5$ and hence they are both positive. Also we have
\[
 \int_{\hn} u^2 \ dv_{\hn} = \int_{\hn} v^2 (\sinh r)^{-(N-1)} \ dv_{\hn} = \sum_{n= 0}^{\infty} \int_{0}^{\infty} d_{n}^2(r) \ dr,
\]
similarly
\[
 \int_{\hn} \frac{u^2}{r^2} \ dv_{\hn} = \sum_{n=0}^{\infty} \int_{0}^{\infty} \frac{d_{n}^2(r)}{r^2} \ dr
\]
and so on. Now, using all these facts, we obtain
\begin{align*}
 \int_{\hn} (\Delta_{\hn} u)^2 \ dv_{\hn} - \left(\frac{N-1}{2} \right)^2 \int_{\hn} |\nabla_{\hn} u|^2 \ dv_{\hn}
\geq \frac{9}{16} \int_{\hn} \frac{u^2}{r^4} \ dv_{\hn} + \frac{(N-1)^2}{16} \int_{\hn} \frac{u^2}{r^2} \ dv_{\hn}\\
\frac{(N-1)(N-3)(N^2-4N-3)}{16} \int_{0}^{\infty} \frac{u^2}{\sinh^4 r} \ dv_{\hn} +\frac{(N-1)(N-3)(N^2 -2N -7)}{16} \int_{0}^{\infty} \frac{u^2}{\sinh^2 r} \ dv_{\hn},
\end{align*}
and hence the proof of inequality \eqref{npoincare}. 

 \par \medskip\par
{\bf {Step 4.}}  Next we show the optimality of the constant $\frac{(N-1)^2}{16}.$  Let us suppose that the $\frac{(N-1)^2}{16}$ is not optimal, i.e., there exist $ C >  \frac{(N-1)^2}{16}$ such that there holds,
 \begin{align*}
 \int_{\hn} (\Delta_{\hn} u)^2 \ dv_{\hn} - \left(\frac{N-1}{2} \right)^2 \int_{\hn} |\nabla_{\hn} u|^2 \ dv_{\hn}
\geq  C \int_{\hn} \frac{u^2}{r^2} \ dv_{\hn},
 \end{align*}
 using \cite[Theorem 2.1]{BGG}) and above we obtain
 
 \begin{align}\label{estimate3}
 \int_{\mathbb{H}^{N}} (\Delta_{\mathbb{H}^{N}} u)^2 \ dv_{\mathbb{H}^{N}} & \geq  C  \int_{\mathbb{H}^{N}} \frac{u^2}{r^2} \ dv_{\mathbb{H}^{N}} 
  + \frac{(N-1)^2}{4} \left[ \frac{(N-1)^2}{4} \int_{\mathbb{H}^{N}} u^2 \ dv_{\hn} + \frac{1}{4} \int_{\hn} \frac{u^2}{r^2} \ dv_{\hn} \right] \notag \\
 & =\left( C + \frac{(N-1)^2}{16} \right)  \int_{\mathbb{H}^{N}} \frac{u^2}{r^2} \ dv_{\mathbb{H}^{N}} 
 + \frac{(N-1)^4}{16} \int_{\hn} u^2 \ dv_{\mathbb{H}^{N}},
 \end{align}
 comparing \eqref{estimate3} with \cite[Theorem 3.1]{BGG}, we conclude that $C \leq \frac{(N-1)^2}{16}$ which gives a contradiction and hence $\frac{(N-1)^2}{16}$ is the best constant.
 $\Box$
\par \bigskip\par
{\bf{Proof of Corollary 2.2. }}
 By considering the upper half space model $\mathbb{R}^{N}_{+} $ for $\hn$ and using the explicit expression of the gradient in these coordinates, namely $\nabla_{\mathbb{H}^{N}} = y^2 \nabla,$ we obtain

\begin{align}\label{pf1}
\int_{\hn} |\nabla_{\hn} u(x,y)|^2 \ dv_{\hn}  & = \int_{\mathbb{R}^{+}} \int_{\mathbb{R}^{N-1}}  y^{2\alpha + 2 - N} |\nabla v|^2 \ dx  \ dy 
+ \alpha^2 \int_{\mathbb{R}^{+}} \int_{\mathbb{R}^{N-1}} y^{2 \alpha - N} v^2 \ dx \ dy \notag \\
& - \alpha (2 \alpha + 1 - N) \int_{\mathbb{R}^{+}} \int_{\mathbb{R}^{N-1}} y^{2 \alpha - N} v^2 \  dx \ dy
\end{align}
and also using Laplacian expression,  $\Delta_{\mathbb{H}^{N}} = y^2 \Delta - (N-2) y$, we get 

\begin{equation}\label{pf2}
\Delta_{\mathbb{H}^{N}} u = y^{\alpha + 2} \Delta v + (2 \alpha - (N-2)) y^{\alpha} \frac{\partial v}{\partial y}  + \alpha (\alpha - (N-1)) y^{\alpha} v,
\end{equation}
where  and $u (x,y) : = y^{\alpha} v(x,y).$
With $\alpha = \frac{N-2}{2}, $ we get

\begin{align*}
 \int_{\hn} (\Delta_{\hn} u(x,y))^2 \ dv_{\hn} & = \int_{\mathbb{R}^{+} \times \mathbb{R}^{N-1}} \left( y^{N+1} (\Delta v)^2 + \frac{N^2(N-2)^2}{16} y^{N-2} v^2 
- \frac{N(N-2)}{2} y^{N} v \Delta v \right) \frac{dx \ dy}{y^{N}}   \\ 
& = \int_{\mathbb{R}^{N-1} \times \mathbb{R}^{+}} \left( y^{2} (\Delta v)^2 + \frac{N^2(N-2)^2}{16} \frac{v^2}{y^2} 
+ \frac{N(N-2)}{2} y^{N} |\nabla v|^2  \right) dy \ dx.
\end{align*}

Similarly with $\alpha = \frac{N-4}{2}, $ and by denoting $\frac{\partial v}{\partial y} := v_{y},$  we get

\begin{align*}
\int_{\hn} (\Delta_{\hn} u(x,y))^2 \ dv_{\hn} & = \int_{\mathbb{R}^{+} \times \mathbb{R}^{N-1}} \left( (\Delta v)^2 + 4 \frac{v_{y}^2}{y^2} 
+ \frac{(N-4)^2 (N +2)^2}{16} \frac{v^2}{y^4} \right.   \\ 
 & \left. - 4 \frac{v_{y} \Delta v}{y} - \frac{(N-4)(N + 2)}{2} \frac{v \Delta v}{y^2} + (N-4)(N+2) \frac{v v_{y}}{y^3} \right) dy \ dx   \\
 & = \int_{\mathbb{R}^{+} \times \mathbb{R}^{N-1}} \left( (\Delta v)^2 + 4 \frac{v_{y}^2}{y^2} + \frac{(N-4)^2 (N+2)^2}{16} \frac{v^2}{y^4}  \right.   \\ 
 & \left. - 4 \frac{v_{y}^2}{y^2} + 2 \frac{|\nabla v|^2}{y^2} + \frac{(N-4)(N+2)}{2} \frac{|\nabla v|^2}{y^2} \right) dy \ dx \\
 & =  \int_{\mathbb{R}^{+} \times \mathbb{R}^{N-1}} \left( (\Delta v)^2 + \frac{(N-4)^2 (N+2)^2}{16} \frac{v^2}{y^4}  
   + 2 \frac{|\nabla v|^2}{y^2} \right.  \\
   & \left.  + \frac{(N-4)(N+2)}{2} \frac{|\nabla v|^2}{y^2} \right) dy \ dx 
 \end{align*}
 
 Now the proof follows by inserting \eqref{pf1}, \eqref{pf2} and the above computations in \eqref{npoincare} with $\alpha = \frac{N-2}{2}$ and $\alpha = \frac{N-4}{2}$
successively. \\

Next we turn to the optimality issues. Assume by contradiction that  the following inequality holds $$\int_{\mathbb{R}^{+}} \int_{\mathbb{R}^{N-1}} \left( y^2 (\Delta v)^2 +
c |\nabla v|^2 \right) \ dx \ dy
 \geq \frac{N(N-2)}{16} \int_{\mathbb{R}^{+}} \int_{\mathbb{R}^{N-1}} \frac{v^2}{y^2} \ dx \ dy $$
for all $ u \in C^{\infty}_{c}(\mathbb{H}^{N})$ with $c< \frac{N^2 -2N -1}{4}$. The above inequality, jointly with \eqref{high} with $k = 1,$  $l = 0$ and Hardy-Maz'ya inequality:

\[
\int_{\mathbb{R}^{+}} \int_{\mathbb{R}^{N-1}} |\nabla v|^2 \ dx \ dy \geq \frac{1}{4} \int_{\mathbb{R}^{+}} \int_{\mathbb{R}^{N-1}} \frac{v^2}{y^2} \ dx \ dy,
\]
where $\frac{1}{4}$ is the best constant, see \cite{Ma} and \cite{FMT, FTT}, yields
$$\int_{\mathbb{H}^{N}} |\Delta_{\mathbb{H}^{N}} u|^2 \ dv_{\mathbb{H}^{N}}\geq\frac{(N -1)^2}{4} \int_{\mathbb{H}^{N}} |\nabla_{\hn}u|^2 \ dv_{\mathbb{H}^{N}}+\left(\frac{N^2-2N-1}{4}-c\right)  \int_{\mathbb{R}^{+}} \int_{\mathbb{R}^{N-1}} |\nabla v|^2 \ dx \ dy$$
$$\geq\frac{(N -1)^4}{16} \int_{\mathbb{H}^{N}} u^2 \ dv_{\mathbb{H}^{N}}+\frac{1}{4}  \left(\frac{N^2-2N-1}{4}-c\right)  \int_{\mathbb{R}^{+}} \int_{\mathbb{R}^{N-1}} \frac{v^2}{y^2} \ dx \ dy$$
$$=\left(\frac{(N -1)^4}{16}+\frac{1}{4}  \left(\frac{N^2-2N-1}{4}-c\right) \right) \int_{\mathbb{H}^{N}} u^2 \ dv_{\mathbb{H}^{N}}\,,$$
a contradiction with \eqref{high} with $k = 2$ and $l = 0$. The optimality of the other constants follows straightforwardly from what remarked above. $\Box$

\par\bigskip\noindent
\textbf{Acknowledgments.} We are grateful to G. Grillo for fruitful discussions during the preparation of the manuscript. The authors are partially supported by the Research Project FIR (Futuro in Ricerca) 2013 \emph{Geometrical and qualitative aspects of PDE's}. The first author is member of the Gruppo Nazionale per l'Analisi Matematica, la Probabilit\`a e le loro Applicazioni (GNAMPA) of the Istituto Nazionale di Alta Matematica (INdAM).

\end{document}